\documentclass{amsart}

\newcommand{\ta}{\tau}
\usepackage{graphicx} 

\usepackage{amsmath,amssymb,hyperref,mathabx,color,enumerate,chngpage}

\usepackage{mathrsfs}

\usepackage{lipsum}
\usepackage{amsfonts}
\usepackage{graphicx}
\usepackage{epstopdf}
\usepackage{algorithm}
\usepackage{algpseudocode}
\ifpdf
  \DeclareGraphicsExtensions{.eps,.pdf,.png,.jpg}
\else
  \DeclareGraphicsExtensions{.eps}
\fi

\newtheorem{definition}{Definition}[section]
\newtheorem{remark}{Remark}[section]
\newtheorem{theorem}{Theorem}[section]
\newtheorem{lemma}[theorem]{Lemma}

\newcommand{\algn}[1]{\begin{align} #1 \end{align}}
\newcommand{\algns}[1]{\begin{align*} #1 \end{align*}}

\newcommand{\mc}[1]{\mathcal{#1}}
\newcommand{\LRp}[1]{\left( #1 \right)}

\newcommand{\LRs}[1]{\left[ #1 \right]}

\newcommand{\dive}{\operatorname{div}}

\newcommand{\supp}{\operatorname{supp}}

\title[Optimal control problems of poroelasticity equations]{Distributed optimal control problems governed by poroelasticity equations}
\author{Arbaz Khan}
\address{Department of Mathematics, IIT Roorkee, India}
\email{arbaz@ma.iitr.ac.in}

\author{Jeonghun J. Lee}
\address{Department of Mathematics, Baylor University}
\email{Jeonghun\_Lee@baylor.edu}

\author{Harpal Singh}
\address{Department of Mathematics, IIT Roorkee, India}
\email{harpa@ma.iitr.ac.in}

\begin{document}


\begin{abstract}
	In this paper, we propose and analyze a novel two-field symmetric formulation with solid displacement and fluid pressure as main unknowns for the Biot’s consolidation model in poroelasticity. Firstly, we prove the well-posedness of the new formulation and then show the existence and uniqueness of optimal control where the fluid sources in the model act as a control variable. We prove a priori error estimates for the fully discrete scheme with backward Euler time discretization and a variational approximation of the control variable. A numerical example is presented to validate the performance of the proposed novel scheme.
\end{abstract}
\keywords{Biot’s consolidation model; Poroelasticity; optimal control; finite element methods; variational discretization; a priori error estimates.}
\subjclass{65M12, 65M15, 65M60, 76S99}

\maketitle

\section{Introduction}
Mathematical models that describe how fluids move and how materials deform within porous structures are essential in many fields, such as geophysics, engineering, medicine and biological applications, such as in the development of the robust mechanical modelling of tissues and cells \cite{malandrino2019poroelasticity} and the research on the poroelastic component of the postseismic process \cite{mccormack2020modeling}. These models help us understand fluid flow in natural and artificial materials like cartilage, bones, tissue scaffolds, and even blood flow through organs like the brain, liver, and eyes. Because of their wide range of applications, flow problems in porous media have become a major topic of study in mathematics. Many researchers have focused on understanding the stability of these models, analyzing how sensitive they are to changes, and developing reliable numerical methods to simulate them. 

A key model in this area is Biot’s consolidation model \cite{biot1941general}. This model describes the interaction between interstitial fluid flowing through deformable porous media and has many applications, such as groundwater \cite{Kim1999549}, biological tissues \cite{Swan200325}, carbon sequestration \cite{WANGEN2016486}, and materials science \cite{MR2273503}. It plays a central role in simulating organ behaviour in medical applications and predicting how porous rocks respond in geophysical studies. In recent years, finite element methods have been widely developed and studied as powerful tools for solving Biot’s model accurately and efficiently.

\subsection{Biot’s consolidation model}
In this paper, we focus on quasi static consolidation problems, as discussed in \cite{MR3803860}. We consider situations where the consolidation process occurs slowly enough that acceleration effects can be neglected. Suppose that $\Omega \subset \mathbb{R}^d$, $d =2,3$ be a bounded domain with Lipschitz polygonal/polyhedral boundary $\partial \Omega$. We assume that there are two independent partitions of boundary $\partial \Omega$:
\begin{align*}
	\partial \Omega = \Gamma_p \cup  \Gamma_{f} = \Gamma_d \cup  \Gamma_t, \qquad \text{such that} \qquad  \Gamma_p \cap  \Gamma_{f} = \Gamma_d \cap  \Gamma_t = \emptyset,
\end{align*}
with positive $(n-1)$-dimensional surface measures $|\Gamma_p|, |\Gamma_d|, |\Gamma_t|>0$. The Biot’s consolidation model consists of an equilibrium equation, prescribing the conservation of the momentum, and of a continuity equation, stating the conservation of the mass. Biot’s consolidation model is described as follows: find the displacement of the porous medium $\boldsymbol{u}(t): \Omega \rightarrow \mathbb{R}^{d}$ and the fluid pressure $p(t): \Omega \rightarrow \mathbb{R}$ satisfying
\begin{subequations}
    \label{sys-eqs}
\begin{align}
	\label{sys1} - \text{div} (\boldsymbol{\mathcal{C}} \epsilon(\boldsymbol{u})) + \alpha \nabla p &= \tilde{\boldsymbol{f}} && \text{in} \ (0,T] \times \Omega,\\
	\label{sys2} s_{0} \dot{p} + \alpha \text{div}(\dot{\boldsymbol{u}}) -  \text{div} (\kappa \nabla p) &= -g && \text{in} \ (0,T] \times \Omega,
\end{align}
\end{subequations}
where $\mathcal{C}$ is the elastic stiffness tensor, $\epsilon(\boldsymbol{u}) = \frac{1}{2}\left(\nabla \boldsymbol{u} + \left(\nabla \boldsymbol{u}\right)^{T}\right)$ is the strain tensor, $\alpha >0$ is the Biot--Willis coefficient (usually close to 1), $\tilde{\boldsymbol{f}}(t): \Omega \rightarrow \mathbb{R}^{d}$ denotes the body force acting on the solid structure, $s_{0} \ge 0$ is the constrained specific storage coefficient, the dot over the variables represents time derivative, $\kappa$ is the permeability tensor which is a $d\times d$ symmetric positive definite matrix with constant entries, and $g(t): \Omega \rightarrow \mathbb{R}$ is the source or sink term representing fluid gain or loss. For isotropic elastic porous media, $\boldsymbol{\mathcal{C}} \epsilon(\boldsymbol{u}) = 2\mu  \epsilon(\boldsymbol{u}) + \lambda \text{div}(\boldsymbol{u}) \boldsymbol{I}$,
where $\boldsymbol{I}\in \mathbb{R}^{d\times d}$ is the identity matrix, $\lambda$ and $\mu$ are Lam\'{e} constants related to the Young's elasticity modulus $E$ and the Poisson’s ratio $0<\nu<1/2$ as
\begin{align*}
	\lambda = \frac{\nu E}{(1-2\nu)(1+\nu)},\qquad \qquad \mu = \frac{E}{2(1+\nu)}.
\end{align*}
The boundary conditions are given by
\begin{align*}
	p(t) &=0 \quad \text{on} \ \Gamma_p, & -\kappa \nabla p(t) \cdot \boldsymbol{n}  &= 0 \quad \text{on} \ \Gamma_{f},
    \\
	\boldsymbol{u}(t) &=0 \quad \text{on} \ \Gamma_d, & \ \ (\mathcal{C} \epsilon(\boldsymbol{u}) - \alpha p \mathbb{I}) \boldsymbol{n} &= \boldsymbol{0} \quad \text{on} \ \Gamma_t,
\end{align*}
for all $t \in (0,T]$, where $\boldsymbol{n}$ is the outward unit normal vector field on $\partial \Omega$. The boundary conditions on $\Gamma_p$ and $\Gamma_{f}$ are the Dirichlet and Neumann boundary conditions in the Darcy flow problems and those on $\Gamma_d$ and $\Gamma_t$ are also Dirichlet and Neumann boundary conditions in linear elasticity. In the rest of this paper we assume that all boundary conditions are homogeneous for simplicity of presentation.

The poroelastic locking typically occurs when $s_0 \ge 0$ is very close to $0$, which corresponds to the case that the fluid and the elastic medium are (almost) incompressible \cite{phillips2009}.
In the literature, several numerical approaches with different formulations have been proposed to overcome poroelasticity locking. For instance, the Galerkin least-squares method based on a four-field formulation (with displacement, stress, fluid flux, and pressure as unknowns) was studied in \cite{MR2177147}. The mixed and discontinuous Galerkin (dG) finite element methods that successfully avoid pressure oscillations were developed in \cite{phillips2008coupling}. A dG method for the two-field formulation was presented in \cite{MR3047799}, while a locking-free non-symmetric interior penalty dG method was introduced in \cite{MR3606362}. Moreover, non-conforming and stabilized finite element methods have been explored to address locking issues in \cite{MR3803860, MR3590654}. More recently, hybridizable discontinuous Galerkin (HDG) methods have been applied to the Biot’s consolidation model \cite{MR3907413}. An embedded-hybridizable DG method was developed for the total pressure formulation in \cite{MR4659441}, a locking-free three-field virtual element method (VEM) in \cite{MR4221326}, a non-conforming locking-free VEM was proposed in \cite{MR4636155}, 
 a non-conforming hybrid high-order method on general meshes in \cite{MR3504993}, and a non-symmetric approach with a quasi-optimal and robust discretization in \cite{MR4405491}.

Optimal control problems arise naturally in many biological and biomechanical applications, where controlling physical processes is essential for understanding and influencing systems. One prominent example is the optimization of flow pressure and the analysis of how key biological parameters influence and can be controlled within the lamina cribrosa--a porous connective tissue located at the base of the optic nerve head in the eye. This tissue is commonly modelled using poroelasticity, and variations in pressure and material parameters are widely believed to play a significant role in the onset and progression of ocular neurodegenerative disorders, most notably glaucoma. Despite the relevance of such models, the literature on optimal control problems governed by poroelastic systems remains rather limited. In this direction, Bociu et al. \cite{MR4410836} studied a linear–quadratic elliptic–parabolic optimal control problem describing fluid flow in deformable porous media, under the assumption of a vanishing specific storage coefficient $s_{0} = 0$. They established the existence and uniqueness of optimal controls and derived the associated first-order necessary optimality conditions.


\subsection{Primary contributions}
To the best of our knowledge, the numerical approximation of optimal control problems governed by these models has not yet been investigated in the existing literature, and the present work constitutes a first step in this direction. Our main goal is to control the system in such a way that the displacement of the solid structure and the fluid pressure remain close to some desired target values. We treat the fluid sources in the model as a control variable. Next, we highlight the main contributions of this work:
\begin{itemize}
	\item \textit{Novel symmetric formulation:} We propose a novel two-field symmetric formulation with displacement and pore pressure as unknowns to deal with Biot's consolidation model.
	\item \textit{Existence and uniqueness of optimal control:} We establish the well-posedness of the new symmetric formulation and then prove the existence and uniqueness of an optimal control (see Theorem~\ref{existence}) and derive the first order necessary and sufficient optimality conditions..
	\item \textit{A priori error estimates for a fully discrete scheme:} We prove the a priori error estimates for a fully discrete scheme using the backward Euler time discretization. In particular, the estimates for displacement and pressure variables are derived in $L^{\infty}(0,T;H^{1}(\Omega))$ norm and in $L^{2}(0,T;L^{2}(\Omega))$ norm for the control variable which is approximated using a \textit{variational discretization} approach proposed by Hinze in \cite{MR2122182}, where no explicit discretization of the control variable is used and the discrete control is achieved by projecting the discrete adjoint state on the admissible control set.
\end{itemize}
\subsection{Organization of the paper}
The rest of the paper is organized as follows: In Section~\ref{Model problem and its well-posedness}, we show the existence of weak solutions and then the existence of optimal control combined with the optimality conditions. In Section~\ref{Discrete formulation}, we introduce the finite element approximation and a fully discrete formulation for the optimality system. Section~\ref{A priori Error analysis} is dedicated to the derivation of a priori error estimates in the $L^{\infty}(0,T;H^1(\Omega))$ norm for the state and adjoint variables, and in the $L^{2}(0,T;L^2(\Omega))$ norm for the optimal control using a \textit{variational discretization} approach. 
\section{Well-posedness}\label{Model problem and its well-posedness}
In this section, we discuss the well--posedness of the model problem,along with existence of optimal control and the optimality conditions. Firstly, we introduce some notations that we are going to use throughout.
\vspace{-1mm}
\subsection{Notations}
For a Banach space $X$ with norm $\|\cdot \|_X$ we will use $L^r(0,t; X)$, $C^0(0,t; X)$ to denote \vspace{-2mm}
\begin{align*}
	\| f \|_{L^r(0,t;X)} &:= \left( \int_0^t \| f(s) \|_X^r \,ds \right)^{1/r} 
	\qquad \text{and} \qquad
	\| f \|_{C^0(0,t;X)} := \sup_{0\le s\le t} \| f(s) \|_X. 
\end{align*}
For functions $v, w \in L^2(D)$ let
\begin{align*}
    (v, w)_{D} := \int_{D} v w \,dx ,
\end{align*}
and this definition is naturally extended to vector-valued or tensor-valued $L^2(D)$ functions $v, w$. 
We consider the following spaces:
\begin{align}
	\boldsymbol{V}&= \{\boldsymbol{v} \in H^1(\Omega; \mathbb{R}^d) \,:\, \boldsymbol{v}|_{\Gamma_d} = 0\},\qquad
	Q= \{ q \in H^1(\Omega) \,:\, q|_{\Gamma_p} = 0 \}.     
\end{align}
We use $\boldsymbol{L}^2(\Omega)$ and $\boldsymbol{H}^1(\Omega)$ to emphasize function spaces of vector-valued functions. However, we do not use boldface symbols to denote norms of vector-valued functions, i.e., $\| \cdot \|_{L^2(\Omega)}$ will be used for the $L^2(\Omega)$ norms of scalar- and vector-valued functions.

To formulate the optimal control problem, we introduce the admissible control set $\mathcal{M}_{ad}$ which is defined as:
\begin{align}\label{admcontrol}
	\mathcal{M}_{ad}:= \{m \in L^{2}(0,T;L^{2}(\Omega)): m_{a}\le m(t,x) \le m_b \ a.e. \ \text{in} \ (0,T) \times \Omega\},
\end{align}
where the bounds $m_{a}, m_{b} \in \mathbb{R}$ such that $m_{a} < m_{b}$.
\subsection{A priori estimates}\label{A priori estimates}
First we consider the following reformulation of variational equations by taking the time derivative of \eqref{sys1}:
\begin{subequations}
	\label{eq:new-strong-eqs}
\begin{align}
	\label{eq:new-strong-eq1} - \text{div} (\boldsymbol{\mathcal{C}} \epsilon(\dot{\boldsymbol{u}})) + \alpha \nabla \dot{p} &= {\boldsymbol{f}} && \text{in} \ (0,T] \times \Omega,\\
	\label{eq:new-strong-eq2} s_{0} \dot{p} + \alpha \text{div}(\dot{\boldsymbol{u}}) -  \text{div} (\kappa \nabla p) &= -g && \text{in} \ (0,T] \times \Omega 
\end{align}
\end{subequations}
where $\boldsymbol{f}$ is the time derivative of $\tilde{\boldsymbol{f}}$. 
Compared to the original system \eqref{sys-eqs} which has an algebraic equation \eqref{sys1}, this new formulation is a system of differential equations. In contrast to the standard theory of evolutionary partial differential equations which needs initial data of $\boldsymbol{u}$ and $p$. However, the time derivative involved part is symmetric but not positive definite, so we cannot apply a standard well-posedness theory of PDEs to this system. In the discussions below we show existence and uniqueness of weak solutions of \eqref{eq:new-strong-eqs}. 
\begin{remark}
    We remark that a solution of \eqref{eq:new-strong-eqs} becomes a solution of \eqref{sys-eqs} if initial data $(\boldsymbol{u}(0), p(0))$ satisfies \eqref{sys1} at $t=0$, so the solution also solves the original system.   
\end{remark}

The bilinear forms $a_{\boldsymbol{u}}:\boldsymbol{V} \times \boldsymbol{V} \rightarrow \mathbb{R}$, $b:Q \times \boldsymbol{V} \rightarrow \mathbb{R}$, $a_p:Q \times Q \to \mathbb{R}$ are defined as
	\begin{align*}
		a_{\boldsymbol{u}}(\boldsymbol{v}, \boldsymbol{w}) &:= \int_{\Omega} \boldsymbol{\mathcal{C}} \epsilon(\boldsymbol{v}) : \epsilon(\boldsymbol{w}) \,dx = 2\mu \int_{\Omega} \epsilon(\boldsymbol{v}) : \epsilon(\boldsymbol{w}) \,dx + \lambda \int_{\Omega} \dive( \boldsymbol{v}) \dive( \boldsymbol{w}) \,dx,
        \\
		b(q,\boldsymbol{v}) &:= - \int_{\Omega} \alpha q \dive( \boldsymbol{v}) \,dx, 
        \qquad a_p(q,r) := \int_{\Omega} \kappa \nabla q \cdot \nabla r \,dx.
	\end{align*}
We set 
\begin{align}
    \| \boldsymbol{v}\|_{a_{\boldsymbol{u}}} := (a_{\boldsymbol{u}} (\boldsymbol{v}, \boldsymbol{v}))^{\frac 12}, \qquad     \| q\|_{a_{p}} := (a_{p} (q, q))^{\frac 12}.
\end{align}
%
It is known that there exists $C>0$ such that 
\begin{align}
	\label{eq:continuous-inf-sup}
	\inf_{0\not = q \in L^2(\Omega) } \sup_{\boldsymbol{v}\in \boldsymbol{V}} \frac{ (q,  \dive( \boldsymbol{v}))_{\Omega} }{{\|\boldsymbol{v}\|_{\boldsymbol{V}}\|q\|_{L^2(\Omega)}}} \ge C.
\end{align}
Suppose that $\boldsymbol{V}_n \subset \boldsymbol{V}$, $Q_n \subset Q$ are finite dimensional subspaces such that they satisfy the Ladyzhenskaya--Babu\v{s}ka--Brezzi (LBB) condition 
%
\begin{align}
	\label{eq:galerkin-inf-sup}
	\inf_{0\not = q \in Q_n } \sup_{\boldsymbol{v}\in \boldsymbol{V}_n} \frac{ (q,  \dive( \boldsymbol{v}))_{\Omega} }{{\|\boldsymbol{v}\|_{H_x^1}\|q\|_{L^2(\Omega)}}} \ge C >0
\end{align}
with $C$ independent of $n$. 
It implies that there exists a linear map $R_n:Q_n \to \boldsymbol{V}_n$ such that $b(q, R_n q) = \alpha \|q\|_{L^2(\Omega)}^2$ and $\|R_n q\|_{H^1(\Omega)} \le C \|q\|_{L^2(\Omega)}$ hold for all $q \in Q_n$ with $C>0$. By a standard argument in the theory of saddle point problems, one can show from \eqref{eq:galerkin-inf-sup} that there exist constants $\delta_0, C_0>0$ satisfying 
\begin{align}
	\label{eq:Babuska-Aziz}
	a_{\boldsymbol{u}} (\boldsymbol{v}, \boldsymbol{v} + \delta_0 R_n q) + b(q,\boldsymbol{v} + \delta_0 R_n q)  - b(q,\boldsymbol{v}) + (s_0 q, q)_{\Omega} \ge C_0 (\|\boldsymbol{v}\|_{\boldsymbol{V}}^2 + \|q\|_{L^2(\Omega)}^2)
\end{align}
for any $(\boldsymbol{v}, q) \in \boldsymbol{V}_n \times Q_n$, and the constants $\delta_0, C_0$ have lower bounds only depend on the inf-sup constant \eqref{eq:galerkin-inf-sup}, the boundedness constants of $a_{\boldsymbol{u}}(\cdot, \cdot)$ and $b(\cdot, \cdot)$, and the coercivity constant of $a_{\boldsymbol{u}}(\cdot, \cdot)$. 
We consider the following variational equations derived from \eqref{eq:new-strong-eqs}: 
\begin{subequations}\label{eq:new-weak-n-eqs}
	\begin{align}
		\label{eq:new-weak-n-eq1} 
        a_{\boldsymbol{u}} (\dot{\boldsymbol{u}}(t), \boldsymbol{v}) + b(\dot{p}(t),\boldsymbol{v}) &= (\boldsymbol{f}(t), \boldsymbol{v})_{\Omega}, \\
		\label{eq:new-weak-n-eq2} 
		b(q,\dot{\boldsymbol{u}}(t)) - (s_0 \dot{p}(t), q)_{\Omega} - a_p(p(t), q) &= (g(t), q)_{\Omega}
	\end{align}
\end{subequations}
for all $(\boldsymbol{v}, q) \in \boldsymbol{V}_n \times Q_n$ and for all $t >0$. This system is a well-posed linear ODE system because the mass matrix is invertible due to \eqref{eq:galerkin-inf-sup}. 

For later use, we assume the following elliptic regularity property for the solutions of the differentiated poroelastic system. There exists \(0<s\le 1\) such that, whenever the data are sufficiently regular, the solution satisfies
\begin{multline}
    \label{eq:elliptic-regularity}
    \|p\|_{L^2(0,T;H^{1+s}(\Omega))}
 + \|\boldsymbol u\|_{L^2(0,T;\boldsymbol H^{1+s}(\Omega))}
    \\
    \le C_{\rm reg}
 \left(
 \|\boldsymbol u (0)\|_{\boldsymbol H^{1+s}(\Omega)}
 + \|p (0)\|_{H^1(\Omega)}
 + \|\boldsymbol f\|_{L^2(0,T;\boldsymbol L^2(\Omega))}
 + \|g\|_{L^2(0,T;L^2(\Omega))}
 \right).
\end{multline}
%
\begin{lemma}
	Suppose that $(\boldsymbol{u}, p) \in C^0([0,T]; \boldsymbol{V}_n \times Q_n) \cap C^1(0,T; \boldsymbol{V}_n \times Q_n)$ satisfy \eqref{eq:new-weak-n-eqs} for all $0< t <T$. Then, 
\algn{
	\label{eq:energy-estimate2}
	&\| \dot{\boldsymbol{u}} \|_{L^2(0,t; \boldsymbol{V})} + \|\dot{p}\|_{L^2(0,t; L^2(\Omega))} + \|p\|_{C^0(0,t; Q)} 
	\\
	\notag
	&\quad \le C (\|p(0)\|_{Q} + \|\boldsymbol{f}\|_{L^2(0,t; L^2(\Omega))} + \|g\|_{L^2(0,t;L^2(\Omega))}), 
	\\
	\label{eq:energy-estimate1}
	&\|{\boldsymbol{u}}\|_{C^0(0,t;\boldsymbol{V})} + \|{p}\|_{C^0(0,t;L^2(\Omega))} 
	\\
	\notag
	&\quad \le \|\boldsymbol{u}(0)\|_{\boldsymbol{V}} + \|p(0)\|_{L^2(\Omega)} + C\sqrt{t} (\|p(0)\|_{Q} + \|\boldsymbol{f}\|_{L^2(0,t; L^2(\Omega))} + \|g\|_{L^2(0,t;L^2(\Omega))} ) .	
}
\end{lemma}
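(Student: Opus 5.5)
The plan is an energy estimate: test \eqref{eq:new-weak-n-eqs} with the choice that produces the left-hand side of \eqref{eq:Babuska-Aziz}, integrate in time, and absorb the data terms. Concretely, at each $t$ take $\boldsymbol{v} = \dot{\boldsymbol{u}} + \delta_0 R_n \dot{p}$ in \eqref{eq:new-weak-n-eq1} and $q = -\dot{p}$ in \eqref{eq:new-weak-n-eq2}, and add the two identities. The left-hand side becomes
\begin{align*}
a_{\boldsymbol{u}}(\dot{\boldsymbol{u}}, \dot{\boldsymbol{u}} + \delta_0 R_n \dot{p}) + b(\dot{p}, \dot{\boldsymbol{u}} + \delta_0 R_n \dot{p}) - b(\dot{p},\dot{\boldsymbol{u}}) + (s_0 \dot{p},\dot{p})_{\Omega} + a_p(p,\dot{p}).
\end{align*}
By \eqref{eq:Babuska-Aziz} this is bounded below by $C_0(\|\dot{\boldsymbol{u}}\|_{\boldsymbol{V}}^2 + \|\dot{p}\|_{L^2(\Omega)}^2) + \frac12 \frac{d}{dt}\|p\|_{a_p}^2$. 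The right-hand side is $(\boldsymbol{f}, \dot{\boldsymbol{u}} + \delta_0 R_n\dot{p})_{\Omega} - (g,\dot{p})_{\Omega}$. Using $\|R_n\dot p\|_{H^1}\le C\|\dot p\|_{L^2}$ and Young's inequality, it is bounded by $\frac{C_0}{2}(\|\dot{\boldsymbol{u}}\|_{\boldsymbol{V}}^2+\|\dot p\|^2_{L^2}) + C(\|\boldsymbol f\|^2_{L^2}+\|g\|^2_{L^2})$.

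Next, absorb these terms and integrate over $(0,\tau)$ for $\tau\le t$. This gives $\|\dot{\boldsymbol u}\|^2_{L^2(0,\tau;\boldsymbol V)}+\|\dot p\|^2_{L^2(0,\tau;L^2)}+\|p(\tau)\|^2_{a_p}\le \|p(0)\|_{a_p}^2 + C(\|\boldsymbol f\|^2_{L^2(0,t;L^2)}+\|g\|^2_{L^2(0,t;L^2)})$. Since $\kappa$ is SPD and $|\Gamma_p|>0$, Poincaré's inequality makes $\|\cdot\|_{a_p}$ equivalent to the $Q$ norm. Taking the supremum over $\tau$ and square roots then yields \eqref{eq:energy-estimate2}. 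No Gronwall argument is needed, because $p$ appears only through the exact derivative $a_p(p,\dot p)$.

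For \eqref{eq:energy-estimate1}, use the fundamental theorem of calculus and Cauchy–Schwarz in time: $\|\boldsymbol u(\tau)\|_{\boldsymbol V}\le \|\boldsymbol u(0)\|_{\boldsymbol V} + \sqrt{\tau}\,\|\dot{\boldsymbol u}\|_{L^2(0,\tau;\boldsymbol V)}$, and the same for $p$ in $L^2$. Inserting \eqref{eq:energy-estimate2} and taking the supremum over $\tau\le t$ gives the claim.

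The main obstacle is the sign bookkeeping in the first step, so that the test functions reproduce exactly the form in \eqref{eq:Babuska-Aziz}. With $q=-\dot p$, equation \eqref{eq:new-weak-n-eq2} contributes $-b(\dot p,\dot{\boldsymbol u}) + (s_0\dot p,\dot p) + a_p(p,\dot p) = -(g,\dot p)$, which has the required signs. One also has to ensure the test functions are admissible, i.e. $R_n\dot p\in\boldsymbol V_n$. This holds because $\dot p(t)\in Q_n$ by the assumed $C^1$ regularity of the discrete solution.
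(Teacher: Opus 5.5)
Your proposal is correct and matches the paper's proof. It uses the same test functions $\boldsymbol{v}=\dot{\boldsymbol{u}}+\delta_0 R_n\dot p$ and $q=-\dot p$, the lower bound \eqref{eq:Babuska-Aziz}, Young's inequality together with the boundedness of $R_n$, and the fundamental theorem of calculus plus Cauchy--Schwarz in time for \eqref{eq:energy-estimate1}; the only cosmetic differences are that you apply Young's inequality pointwise in time before integrating, and that you state explicitly the equivalence of $\|\cdot\|_{a_p}$ and $\|\cdot\|_Q$ via Poincar\'e's inequality, which the paper uses tacitly.
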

\begin{proof}
Let $\boldsymbol{v} = \dot{\boldsymbol{u}} + \delta_0 R_n \dot{p}$ and $q = -\dot{p}$ in \eqref{eq:new-weak-n-eq1} and \eqref{eq:new-weak-n-eq2}. If we add the equations and integrate from 0 to $t$, then 
\begin{align*}
	&\int_0^t \LRs{ a_{\boldsymbol{u}}(\dot{\boldsymbol{u}}(s), \dot{\boldsymbol{u}}(s) + \delta_0 R_n \dot{p}(s)) + \delta_0 b(\dot{p}(s), R_n \dot{p}(s)) + (s_0 \dot{p}(s), \dot{p}(s))_{\Omega} } \,ds + \frac 12 a_p(p(t), p(t))
	\\
	&= \frac 12 a_p( p(0), p(0)) + \int_0^t (\boldsymbol{f}(s) , \dot{\boldsymbol{u}}(s) + \delta_0 R_n \dot{p}(s) )_{\Omega} \,ds - \int_0^t ( g(s), \dot{p}(s))_{\Omega} \,ds .
\end{align*}
By \eqref{eq:Babuska-Aziz}, the Cauchy--Schwarz inequality, and the boundedness of $R_n$, we have
\begin{align*}
	&C_0 (\| \dot{\boldsymbol{u}} \|_{L^2(0,t; \boldsymbol{V})}^2 + \| \dot{p} \|_{L^2(0,t; L^2(\Omega))}^2 ) + \frac 12 \| p(t) \|_Q^2
	\\
	&\le \frac 12 \| p(0) \|_Q^2 + C \|\boldsymbol{f}\|_{L^2(0,t; L^2(\Omega))} (\|\dot{\boldsymbol{u}}\|_{L^2(0,t; \boldsymbol{V})} + \|\dot{p}\|_{L^2(0,t;L^2(\Omega))})  + \|g\|_{L^2(0,t; L^2(\Omega))} \|\dot{p}\|_{L^2(0,t;L^2(\Omega))}.
\end{align*}
Then, \eqref{eq:energy-estimate2} follows by Young's inequality.

The fundamental theorem of calculus leads to
\begin{align*}
	\|\boldsymbol{u}(t)\|_{\boldsymbol{V}} + \|p(t)\|_{L^2(\Omega)} &\le \|\boldsymbol{u}(0)\|_{\boldsymbol{V}} + \|p(0)\|_{L^2(\Omega)} + \|\dot{\boldsymbol{u}}\|_{L^1(0,t;\boldsymbol{V})} + \|\dot{p}\|_{L^1(0,t;L^2(\Omega))} 
	\\
	&\le \|\boldsymbol{u}(0)\|_{\boldsymbol{V}} + \|p(0)\|_{L^2(\Omega)} + \sqrt{t} (\|\dot{\boldsymbol{u}}\|_{L^2(0,t;\boldsymbol{V})} + \|\dot{p}\|_{L^2(0,t;L^2(\Omega))}),
\end{align*}
so this estimate and \eqref{eq:energy-estimate2} yield \eqref{eq:energy-estimate1}. 
\end{proof}
%
\subsection{Existence of weak solutions}\label{Existence of weak solutions}
\begin{definition} 
	For given $(\boldsymbol{u}_0, p_0) \in \boldsymbol{V} \times Q$ and $T>0$, we define that 
	$\LRp{\boldsymbol{u}, p}\in H^1(0,T;\boldsymbol{V}) \times (H^1(0,T;L^2(\Omega))\cap L^2(0,T;Q))$ is called a weak solution of \eqref{eq:new-strong-eqs} if 
	\begin{subequations}
		\label{eq:weak-form-eqs}
	\begin{align}
		\label{eq:weak-form-eq1}
		\int_0^T \LRp{a_{\boldsymbol{u}} \LRp{\dot{\boldsymbol{u}} (t), \boldsymbol{v} (t)} + b \LRp{\dot{p} (t), \boldsymbol{v} (t)}} \, dt &= \int_0^T \LRp{\boldsymbol{f} (t), \boldsymbol{v} (t)}_{\Omega} \, dt, \\
		\label{eq:weak-form-eq2}
		\int_0^T \LRp{b \LRp{q(t), \dot{\boldsymbol{u}} (t)} - \LRp{s_0 \dot{p} (t), q(t)}_{\Omega} - a_p\LRp{ p(t) , q(t)}} \, dt &= \int_0^T \LRp{g(t), q(t)}_{\Omega} \, dt, \\
		\label{eq:weak-form-IC}
		\boldsymbol{u} (0) &= \boldsymbol{u}_0 \text{, } p(0) = p_0
	\end{align}
	\end{subequations}
	holds for all $\LRp{\tilde{\boldsymbol{v}}, \tilde{q}} \in C_0^{\infty} \LRp{[0, T); \boldsymbol{V} \times Q}$ where 
\algn{
	\label{eq:spacetime-test-function}
	&C_0^{\infty} \LRp{[0, T); \boldsymbol{V} \times Q} \\
	\nonumber &:=\{ (\boldsymbol{v},q) \in C^{\infty}([0,T); \boldsymbol{V} \times Q) \,:\, (\supp \boldsymbol{v} \cup \supp q) \subset \subset [0,T) \times \Omega \}.
}
\end{definition}
In the discussions below, we use $L_t^rX$ to denote $L^r(0,T;X)$ for fixed $T>0$. 
\begin{theorem}
	For given $(\boldsymbol{u}_0, p_0) \in \boldsymbol{V} \times Q$ and $T>0$ there exists a unique weak solution $(\boldsymbol{u}, p)$ of \eqref{eq:new-strong-eqs} which satisfies \eqref{eq:energy-estimate2}, \eqref{eq:energy-estimate1}. Under the regularity assumption \eqref{eq:elliptic-regularity}, the solution satisfies higher regularity conditions in \eqref{eq:elliptic-regularity}.
%
\end{theorem}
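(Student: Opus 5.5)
The proof is a Faedo--Galerkin argument built on the a priori estimates of the preceding Lemma.

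\textbf{Galerkin approximation.} We choose nested conforming finite element families $\boldsymbol{V}_n\subset\boldsymbol{V}$, $Q_n\subset Q$ whose union is dense in $\boldsymbol{V}\times Q$ and which satisfy \eqref{eq:galerkin-inf-sup} uniformly in $n$ (e.g.\ Taylor--Hood pairs on refined meshes). The initial data are taken as $\boldsymbol{u}_{n}(0)$, $p_{n}(0)$, the $H^1$-projections of $(\boldsymbol{u}_0,p_0)$, so that $\|\boldsymbol{u}_n(0)\|_{\boldsymbol{V}}+\|p_n(0)\|_Q\le C(\|\boldsymbol{u}_0\|_{\boldsymbol V}+\|p_0\|_Q)$ and the projections converge strongly. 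Since the mass matrix of \eqref{eq:new-weak-n-eqs} is invertible, the finite dimensional system has a unique $C^1$ solution $(\boldsymbol{u}_n,p_n)$. The Lemma then gives bounds, uniform in $n$, on $\dot{\boldsymbol u}_n$ in $L_t^2\boldsymbol V$, on $\dot p_n$ in $L^2_tL^2$, on $p_n$ in $L^\infty_tQ$, and on $\boldsymbol u_n$ in $L^\infty_t\boldsymbol V$. Uniformity holds because the constants depend only on the inf-sup constant.

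\textbf{Passage to the limit.} We extract a subsequence with $\boldsymbol u_n\rightharpoonup\boldsymbol u$ weakly in $H^1(0,T;\boldsymbol V)$, $p_n\rightharpoonup p$ weakly in $H^1(0,T;L^2)$ and weakly-$*$ in $L^\infty(0,T;Q)$. For a test function $(\boldsymbol v,q)$ of the form $\phi(t)(\boldsymbol v_m,q_m)$ with $\phi\in C_0^\infty([0,T))$ and $(\boldsymbol v_m,q_m)\in\boldsymbol V_m\times Q_m$, $m\le n$, we integrate \eqref{eq:new-weak-n-eqs} in time and pass to the limit. This is legitimate because every term is linear and hence weakly continuous. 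Linear combinations of such test functions are dense in $C_0^\infty([0,T);\boldsymbol V\times Q)$ in the relevant $L^2_t$ norms, so \eqref{eq:weak-form-eq1}--\eqref{eq:weak-form-eq2} hold. The initial condition follows because the map $(\boldsymbol u,p)\mapsto(\boldsymbol u(0),p(0))$ is continuous and linear from $H^1(0,T;\boldsymbol V)\times H^1(0,T;L^2)$, hence weakly continuous, and the projected data converge. The estimates \eqref{eq:energy-estimate2}, \eqref{eq:energy-estimate1} pass to the limit by weak lower semicontinuity of norms. For the $C^0$ norms, we use a.e.\ $t$ together with the continuity of $\boldsymbol u$ and $p$, and for $\|p\|_{C^0Q}$ weak-$*$ lower semicontinuity.

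\textbf{Uniqueness.} This is the main obstacle, since we cannot test the limit equations with $\dot{\boldsymbol u}+\delta_0R_n\dot p$: $R_n$ is only defined discretely, and $\dot p$ is not in $Q$. We take the difference $(\boldsymbol w,r)$ of two solutions; it solves the homogeneous problem with zero data. In \eqref{eq:weak-form-eq2} we test with $q=\chi_{[0,t]}r$, which is admissible by density since $r\in L^2_tQ$. In \eqref{eq:weak-form-eq1} we test with $\boldsymbol v=\chi_{[0,t]}\boldsymbol w$, which is allowed because $\dot{\boldsymbol w}\in L^2_t\boldsymbol V$. Subtracting the two identities, the coupling terms $b(\dot r,\boldsymbol w)$ and $b(r,\dot{\boldsymbol w})$ combine into $\frac{d}{dt}b(r,\boldsymbol w)$, which is not sign-definite. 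The alternative is to test \eqref{eq:weak-form-eq1} with $\dot{\boldsymbol w}$ and \eqref{eq:weak-form-eq2} with $-r$, which gives
\[
\int_0^t\|\dot{\boldsymbol w}\|_{a_{\boldsymbol u}}^2+\tfrac{s_0}{2}\|r(t)\|^2+\int_0^t\|r\|_{a_p}^2 = -\int_0^t\big(b(\dot r,\dot{\boldsymbol w})+b(r,\dot{\boldsymbol w})\big).
\]
The right-hand side is again not sign-definite. The cleanest route I would try uses the continuous inf-sup condition \eqref{eq:continuous-inf-sup} in the limit problem. From \eqref{eq:weak-form-eq1} with zero data, $b(\dot r,\boldsymbol v)=-a_{\boldsymbol u}(\dot{\boldsymbol w},\boldsymbol v)$ for all $\boldsymbol v$, so $\|\dot r\|_{L^2}\le C\|\dot{\boldsymbol w}\|_{\boldsymbol V}$. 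Then we test with the continuous analogue $\boldsymbol v=\dot{\boldsymbol w}+\delta_0R\dot r$, where $R$ is a bounded right inverse of the divergence; this reproduces the Lemma's argument at the continuous level with zero data. All needed regularity is available, since $\dot{\boldsymbol w}$ and $R\dot r$ lie in $L^2_t\boldsymbol V$ and $\dot r\in L^2_tL^2$ pairs with $q=-\dot r$ after density (only $b$ and the $s_0$ term involve $q$ beyond $a_p(r,q)$, and $a_p(r,\dot r)=\frac12\frac{d}{dt}\|r\|_{a_p}^2$ holds through Galerkin-type regularization or Steklov averaging). The result is $\dot{\boldsymbol w}=0$, $\dot r=0$, and $r\equiv 0$; with zero initial data, $\boldsymbol w\equiv0$.

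\textbf{Higher regularity.} This follows by applying \eqref{eq:elliptic-regularity} to the weak solution, since the data in the weak formulation are exactly those in that assumption.
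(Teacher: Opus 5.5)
Your existence part is essentially the paper's proof: nested inf-sup-stable Galerkin spaces, the uniform bounds from the Lemma, weak and weak-$*$ limits, density of discrete test functions, and recovery of the initial data. For the initial data you use weak continuity of the trace $H^1(0,T;X)\to X$, where the paper passes to the limit in $C^0([0,T];\boldsymbol V^*)$; this amounts to the same thing.

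The real difference is uniqueness. The paper just asserts that \eqref{eq:energy-estimate2}--\eqref{eq:energy-estimate1} imply it. But the Lemma is proved only for solutions in $\boldsymbol V_n\times Q_n$, so applying it to the difference of two arbitrary weak solutions tacitly requires redoing the energy argument in the continuous setting. That is exactly your final route, and it works. Since $|\Gamma_t|>0$, \eqref{eq:continuous-inf-sup} gives a bounded linear $R:L^2(\Omega)\to\boldsymbol V$ with $b(q,Rq)=\alpha\|q\|_{L^2(\Omega)}^2$. The continuous analogue of \eqref{eq:Babuska-Aziz} follows by the same algebra, also when $s_0=0$, and testing with $\dot{\boldsymbol w}+\delta_0R\dot r$ and $-\dot r$ gives $\dot{\boldsymbol w}=0$, $\dot r=0$.

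The step you should make precise, instead of appealing to ``Galerkin-type regularization or Steklov averaging'', is the meaning of $a_p(r,\dot r)$. The pressure equation with zero data gives $a_p(r(t),q)=(-\alpha\dive\dot{\boldsymbol w}(t)-s_0\dot r(t),q)_{\Omega}$ for all $q\in Q$. Hence $r\in L^2(0,T;D(A))$ for the operator $A$ induced by $a_p$, and the equation extends by density to all $q\in L^2(\Omega)$, so $a_p(r,\dot r)$ means $(Ar,\dot r)_\Omega$. The Lions--Magenes lemma, with $\dot r\in L^2(0,T;L^2(\Omega))$, then yields $r\in C^0([0,T];Q)$ and $\frac{d}{dt}\frac12\|r\|_{a_p}^2=(Ar,\dot r)_{\Omega}$, which also settles the endpoint $t=0$. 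The same continuity into $Q$ is what lets weak-$*$ lower semicontinuity control $\sup_t\|p(t)\|_Q$ rather than only the essential supremum. Your two earlier trial tests can be dropped.

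Your version, together with your weak lower semicontinuity argument passing the bounds to the limit (which the paper omits), actually shows that every weak solution satisfies \eqref{eq:energy-estimate2}--\eqref{eq:energy-estimate1}, which is what the theorem claims. The paper's proof is shorter but leaves this step implicit.
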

\begin{proof}
The estimates \eqref{eq:energy-estimate2}, \eqref{eq:energy-estimate1} imply uniqueness of solutions, so we only focus on existence.

Let $\{ \boldsymbol{V}_n \times Q_n \}_{n=1}^{\infty}$ be a nested sequence of finite dimensional subspaces of $\boldsymbol{V} \times Q$ such that $\boldsymbol{V}_{n} \times Q_{n} \subset \boldsymbol{V}_{n+1} \times Q_{n+1}$ and $\bigcup_{n=1}^{\infty} \boldsymbol{V}_n$, $\bigcup_{n=1}^{\infty} Q_n$ are dense in $\boldsymbol{V}$, $Q$, respectively, with the norms $\|\cdot\|_{\boldsymbol{V}}$, $\|\cdot \|_Q$. We also assume that there exists $C>0$ independent of $n$ such that 
\algn{
	\label{eq:discrete-inf-sup}
	\inf_{0\not = q \in Q_n} \sup_{\boldsymbol{v}\in \boldsymbol{V}_n} \frac{ ( q, \dive (\boldsymbol{v}))_{\Omega}}{\|\boldsymbol{v}\|_{\boldsymbol{V}}\|q\|_{L^2(\Omega)}} \ge C .
}
Given $\LRp{\boldsymbol{u}(0), p(0)} \in \boldsymbol{V} \times Q$ let $\{ \LRp{\boldsymbol{u}_{n, 0}, p_{n, 0}} \in \boldsymbol{V}_n \times Q_n \}_{n=1}^{\infty}$ be a sequence such that
\algn{
	\nonumber \|\boldsymbol{u}_{n, 0} - \boldsymbol{u}_0\|_{\boldsymbol{V}} + \|p_{n, 0} - p_0\|_{Q} \to 0 \text{ as }  n \to \infty. 
}
Let $\boldsymbol{f}_n$, $g_n$ be the $L^2$ projections of $\boldsymbol{f}$, $g$ to $\boldsymbol{V}_n$ and $Q_n$. For each $n$ consider the problem finding $\LRp{\boldsymbol{u}_n, p_n} \in C^0 \LRp{[0, T]; \boldsymbol{V}_n \times Q_n} \cap C^1 \LRp{0, T; \boldsymbol{V}_n \times Q_n}$ satisfying \eqref{eq:new-weak-n-eqs} 
%
with initial data $\boldsymbol{u}_n(0) = \boldsymbol{u}_{n, 0}$, $p_n (0) = p_{n, 0}$. 
Then, by the energy estimates \eqref{eq:energy-estimate2} and \eqref{eq:energy-estimate1}, 
\algn{
	\nonumber &\|\boldsymbol{u}_n\|_{L_t^{\infty} \boldsymbol{V}} + \|\dot{\boldsymbol{u}}_n\|_{L_t^{2} \boldsymbol{V}} + \|p_n\|_{L_t^{\infty} Q} + \|\dot{p}_n\|_{L_t^{2} L^2(\Omega)} \\
	\nonumber &\le C (\|\boldsymbol{u}(0)\|_{\boldsymbol{V}} + \|p(0)\|_{Q} + \|\boldsymbol{f}\|_{L_t^2L^2(\Omega)} + \|g\|_{L_t^2 L^2(\Omega)}) , 
}
so $\{ \boldsymbol{u}_n \}$, $\{p_n\}$ are bounded sequences in $L_t^{\infty} \boldsymbol{V} \cap L_t^{2} \boldsymbol{V}$ and $L_t^{\infty} Q \cap L_t^{2} L^2(\Omega)$, respectively. By Banach-Alaoglu theorem, there is a subsequence $\{\LRp{\boldsymbol{u}_{n_i}, p_{n_i}} \}_{i=1}^{\infty}$ such that
\algns{
	\boldsymbol{u}_{n_i} &\stackrel{*}{\rightharpoonup} \boldsymbol{w}_0 \in L_t^{\infty} \boldsymbol{V}, \quad \dot{\boldsymbol{u}}_{n_i} \rightharpoonup \boldsymbol{w}_1 \in L_t^2 \boldsymbol{V}, \quad p_{n_i} \stackrel{*}{\rightharpoonup} r_0 \in L_t^{\infty} Q, \quad \dot{p}_{n_i} \rightharpoonup r_1 \in L_t^2 L^2(\Omega)
}
as $i \to \infty$. Without loss of generality, we may assume that $\{(\boldsymbol{u}_n, p_n)\}_{n=1}^{\infty}$ is such a subsequence itself.

We can show that $\boldsymbol{w}_0$ has a weak time derivative, and the weak time derivative of $\boldsymbol{w}_0$ is $\boldsymbol{w}_1$, by 
\begin{align*}
    -\int_0^T \langle \boldsymbol{w}_0, \boldsymbol{z} \rangle_{\boldsymbol{V}, \boldsymbol{V}^*} \phi'(t)\,dt &= -\lim_{n \to \infty} \int_0^T \langle \boldsymbol{u}_n, \boldsymbol{z} \rangle_{\boldsymbol{V}, \boldsymbol{V}^*} \phi'(t)\,dt 
    \\
    &= \lim_{n \to \infty} \int_0^T \langle \dot{\boldsymbol{u}}_n, \boldsymbol{z} \rangle_{\boldsymbol{V}, \boldsymbol{V}^*} \phi(t)\,dt 
    \\
    &= \int_0^T \langle \boldsymbol{w}_1, \boldsymbol{z} \rangle_{\boldsymbol{V}, \boldsymbol{V}^*} \phi(t)\,dt ,
\end{align*}
for any $\boldsymbol{z} \in \boldsymbol{V}^*$ and a compactly supported smooth function $\phi(t)$ on $(0,T)$. Similarly, we can show that the weak derivative of $r_0$ is $r_1$. 

By integration in time,
\begin{align*}
	&\int_0^T ( a_{\boldsymbol{u}} \LRp{\dot{\boldsymbol{u}}_n (s), \boldsymbol{v} (s)} + b \LRp{\dot{p}_n (s), \boldsymbol{v} (s)} + b \LRp{q(s), \dot{\boldsymbol{u}}_n (s)} \\ \notag
	&- \LRp{s_0 \dot{p}_n (s), q (s)}_{\Omega} - a_p\LRp{ p_n (s), q (s)} ) \, ds = \int_0^T \LRp{\LRp{\boldsymbol{f}_n (s), \boldsymbol{v} (s)}_{\Omega} + \LRp{g_n (s), q (s)}_{\Omega}} \, ds
\end{align*}
holds for all $\LRp{\boldsymbol{v}, q} \in C_0^{\infty} \LRp{[0, T); \boldsymbol{V}_n \times Q_n}$. 
For simplicity we define $B_0 \LRp{\LRp{\boldsymbol{v}, q}, \LRp{\tilde{\boldsymbol{v}}, \tilde{q}}}$ for $\boldsymbol{v}, \tilde{\boldsymbol{v}} \in \boldsymbol{V}$ and $q, \tilde{q} \in Q$ by
\algn{
	\nonumber B_0 \LRp{\LRp{\boldsymbol{v}, q}, (\tilde{\boldsymbol{v}}, \tilde{q}) }
	= a_{\boldsymbol{u}} ({\boldsymbol{v}, \tilde{\boldsymbol{v}}}) + b ({q, \tilde{\boldsymbol{v}}}) + b \LRp{\tilde{q}, \boldsymbol{v}} - \LRp{s_0 q, \tilde{q}}_{\Omega}.
}
The previous formula is
\algn{
	\label{eq:star-1} \int_0^T &\LRp{B_0 \LRp{\LRp{\dot{\boldsymbol{u}}_n (s), \dot{p}_n (s)}, \LRp{\boldsymbol{v} (s), q (s)}} - a_p\LRp{ p_n (s), q (s)}} \, ds \\ \notag
	&= \int_0^T \LRp{\LRp{\boldsymbol{f}_n (s), \boldsymbol{v} (s)}_{\Omega} + \LRp{g_n (s), q (s)}_{\Omega}} \, ds .
}
Since $\dot{\boldsymbol{u}}_n \rightharpoonup \boldsymbol{w}_1$ in $L_t^2 \boldsymbol{V}$, $\dot{p}_n \rightharpoonup r_1$ in $L_t^2 L^2(\Omega)$, $p_n \rightharpoonup r_0$ in $L_t^2 Q$, $\boldsymbol{f}_n \to \boldsymbol{f}$ in $L_t^2 \boldsymbol{V}^*$, $g_n \to g$ in $L_t^2 Q^*$,
\algn{
	\label{eq:star-2} \int_0^T &\LRp{B_0 \LRp{\LRp{\boldsymbol{w}_1 (s), r_1 (s)}, \LRp{\boldsymbol{v} (s), q (s)}} - a_p\LRp{ r_0 (s), q (s)}} \, ds \\ \notag 
	&= \int_0^T \LRp{\LRp{\boldsymbol{f} (s), \boldsymbol{v} (s)}_{\Omega} + \LRp{g (s), q (s)}_{\Omega}} \, ds .
}
Since $\bigcup_{n=1}^{\infty} \boldsymbol{V}_n$ and $\bigcup_{n=1}^{\infty} Q_n$ are dense in $\boldsymbol{V}$ and $Q$, respectively, \eqref{eq:star-2} holds for all $\LRp{\boldsymbol{v}, q} \in C_0^{\infty}([0,T);\boldsymbol{V} \times Q)$ where $C_0^{\infty}([0,T);\boldsymbol{V} \times Q)$ is defined as in \eqref{eq:spacetime-test-function}.

It remains to verify the initial conditions. Since
\[
\boldsymbol u_n(t)=\boldsymbol u_{n,0}
+\int_0^t \dot{\boldsymbol u}_n(s)\,ds,
\qquad
p_n(t)=p_{n,0}+\int_0^t \dot p_n(s)\,ds,
\]
passing to the limit in \(C^0([0,T];\boldsymbol V^*)\) and
\(C^0([0,T];L^2(\Omega))\), respectively, and using
\(\boldsymbol u_{n,0}\to \boldsymbol u_0\) in \(\boldsymbol V\) and
\(p_{n,0}\to p_0\) in \(Q\), we obtain
\(\boldsymbol w_0(0)=\boldsymbol u_0\) and \(r_0(0)=p_0\).
\end{proof}

\subsection{Existence of optimal control}\label{Existence of optimal control}
Our main goal is to choose a distributed control $m \in \mc{M}_{ad}$ in such a way that the corresponding solid displacement $\boldsymbol{u}$ and fluid pressure $p$ are the best possible approximations to the desired/target values (dictated by the applications), denoted by $\boldsymbol{u}_C$ and $p_C$, respectively.
\begin{definition} 
	For given $\boldsymbol{u}_0 \in \boldsymbol{V}$ and $p_0 \in Q$, we define that 
	$\LRp{\boldsymbol{u}, p, m}\in H^1(0,T;\boldsymbol{V}) \times (H^1(0,T;L^2(\Omega))\cap L^2(0,T;Q)) \times L^2(0,T;L^2(\Omega))$ is admissible  for initial data $(\boldsymbol{u}_0, p_0)$ if $\LRp{\boldsymbol{u}, p, m}$ satisfies
	\algn{
		\nonumber \int_0^T \LRp{a_{\boldsymbol{u}} \LRp{\dot{\boldsymbol{u}} (t), \boldsymbol{v} (t)} + b \LRp{\dot{p} (t), \boldsymbol{v} (t)}} \, dt &= \int_0^T \LRp{\boldsymbol{f} (t), \boldsymbol{v} (t)}_{\Omega} \, dt, \\
		\nonumber \int_0^T \LRp{b \LRp{q(t), \dot{\boldsymbol{u}} (t)} - \LRp{s_0 \dot{p} (t), q(t)}_{\Omega} - a_p\LRp{p(t), q(t)}} \, dt &= \int_0^T \LRp{m(t), q(t)}_{\Omega} \, dt, \\
		\nonumber \boldsymbol{u} (0) &= \boldsymbol{u}_0 \text{, } p(0) = p_0.
	}
\end{definition}
Let $\alpha_{\boldsymbol{u}, C}$ and $\alpha_{p, C}$ be non-negative numbers such that $ \alpha_{\boldsymbol{u}, C} +  \alpha_{p, C} > 0$, and the constant $\gamma > 0$ is a control cost parameter. For given target velocity and pressure fields 
\begin{align*}
    \boldsymbol{u}_C \in L^2(0,T; \boldsymbol{L}^2(\Omega)), \quad p_C \in L^2(0,T; L^2(\Omega)), 
\end{align*}
and an admissible pair $\LRp{\boldsymbol{u}, p, m}$, we define the objective functional $J \LRp{\boldsymbol{u}, p, m}$ as
\begin{align}
	\label{obj_fun} J \LRp{\boldsymbol{u}, p, m} = 
	  \frac{\alpha_{\boldsymbol{u}, C}}{2} \int_0^T \int_{\Omega} |\boldsymbol{u} \LRp{x, t} \hspace{-0.2mm}-\hspace{-0.2mm} \boldsymbol{u}_C \LRp{x, t}|^2 \, dx \, dt 
     \\
	 + \frac{\alpha_{p, C}}{2} \int_0^T \int_{\Omega} |p(x, t) - p_C (x, t)|^2 \, dx \, dt + \frac{\gamma}{2} \int_0^T \int_{\Omega} |m(x, t)|^2 \, dx \, dt \notag,
\end{align}
where $\mc{M}_{ad}$ is the closed and convex admissible set of control variable defined as in \eqref{admcontrol}. The term $\frac{\gamma}{2} \int_0^T \int_{\Omega} |m(x, t)|^2 \, dx \, dt$ is introduced to bound the control function and to prove the existence of an optimal control. 
\begin{definition}
	$\bar{m} \in \mc{M}_{ad}$ is called an optimal control if the admissible triple $\LRp{\bar{\boldsymbol{u}}, \bar{p}, \bar{m}}$ for given initial data $(\boldsymbol{u}_0, p_0)$ satisfies
	\algn{
		\nonumber J \LRp{\boldsymbol{u}, p, m} \ge J \LRp{\bar{\boldsymbol{u}}, \bar{p}, \bar{m}}
	}
	for all $\LRp{\boldsymbol{u}, p, m}$ admissible for given initial data $(\boldsymbol{u}_0, p_0)$ and for $m \in \mc{M}_{ad}$.
\end{definition}
\begin{theorem} \label{existence}
	For any given initial data $(\boldsymbol{u}_0, p_0)$ there exists a unique optimal control $\bar{m} \in \mc{M}_{ad}$.
\end{theorem}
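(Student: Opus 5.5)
The plan is to use the direct method of the calculus of variations on the reduced functional, and then get uniqueness from strict convexity.

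By the existence theorem of Section~\ref{Existence of weak solutions}, with $g$ replaced by $m$, every $m\in\mc{M}_{ad}$ has a unique state $(\boldsymbol{u}(m),p(m))$ such that $(\boldsymbol{u}(m),p(m),m)$ is admissible for $(\boldsymbol{u}_0,p_0)$. The control-to-state map $m\mapsto(\boldsymbol{u}(m),p(m))$ is affine. Indeed, $\boldsymbol{u}(m)=\boldsymbol{u}^0+S_{\boldsymbol{u}}m$ and $p(m)=p^0+S_pm$, where $(\boldsymbol{u}^0,p^0)$ solves the problem with $m=0$ and the given data, and $(S_{\boldsymbol{u}}m,S_pm)$ solves it with zero initial data and $\boldsymbol{f}=0$. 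The estimates \eqref{eq:energy-estimate2}--\eqref{eq:energy-estimate1} pass to the limit by weak lower semicontinuity. They show that $S=(S_{\boldsymbol{u}},S_p)$ is a bounded linear operator from $L^2(0,T;L^2(\Omega))$ into $(H^1(0,T;\boldsymbol{V}))\times(H^1(0,T;L^2(\Omega))\cap L^\infty(0,T;Q))$, and hence into $L^2(0,T;\boldsymbol{L}^2(\Omega))\times L^2(0,T;L^2(\Omega))$. The reduced functional is
\[
j(m):=J(\boldsymbol{u}(m),p(m),m).
\]
It is well defined, continuous and convex on $L^2(0,T;L^2(\Omega))$, because it is a sum of squared norms of affine maps of $m$.

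For existence, note that $j\ge 0$, so $\bar j:=\inf_{\mc{M}_{ad}} j$ is finite. The set $\mc{M}_{ad}$ is nonempty, for example it contains a constant in $[m_a,m_b]$. Take a minimizing sequence $m_k\in\mc{M}_{ad}$. It is bounded in $L^2(0,T;L^2(\Omega))$, either because of the box constraints or because of the term $\frac{\gamma}{2}\|m_k\|^2$. Extract a subsequence with $m_k\rightharpoonup\bar m$ weakly. Since $\mc{M}_{ad}$ is closed and convex, it is weakly closed by Mazur's lemma, so $\bar m\in\mc{M}_{ad}$. Because $S$ is bounded and linear, it is weak--weak continuous. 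Thus $\boldsymbol{u}(m_k)\rightharpoonup\boldsymbol{u}(\bar m)$ and $p(m_k)\rightharpoonup p(\bar m)$ in $L^2(0,T;L^2)$, and also weakly in the energy spaces. The limit triple is admissible: either pass to the limit directly in the weak formulation, which is linear in $(\boldsymbol{u},p,m)$, or simply note that it equals $(\boldsymbol{u}(\bar m),p(\bar m),\bar m)$. Each term of $J$ is a continuous convex functional of $(\boldsymbol{u},p,m)$, hence weakly lower semicontinuous, so $j(\bar m)\le\liminf j(m_k)=\bar j$. Therefore $\bar m$ is optimal.

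For uniqueness, the map $m\mapsto\frac{\gamma}{2}\|m\|^2_{L^2(0,T;L^2(\Omega))}$ is strictly convex since $\gamma>0$. The tracking terms are convex as compositions of convex quadratics with affine maps. So $j$ is strictly convex on the convex set $\mc{M}_{ad}$. If $m_1\ne m_2$ were both minimizers, then $j(\tfrac12(m_1+m_2))<\bar j$, which is a contradiction. Uniqueness of the state also follows, from the uniqueness part of the existence theorem.

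The main obstacle is the weak continuity of the control-to-state map. It needs the solution operator to be well defined and bounded for a general $m\in L^2(0,T;L^2(\Omega))$. The a priori bounds of the lemma are stated only for Galerkin solutions, so I would justify this by invoking uniqueness and the energy bounds inherited by the weak limit in the existence theorem. With that in hand, linearity gives weak continuity at once, and no compactness argument is needed.
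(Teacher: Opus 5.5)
Your proposal is correct and follows the same strategy as the paper: a minimizing sequence bounded in $L^2(0,T;L^2(\Omega))$, a weak limit whose associated state is admissible, weak lower semicontinuity of $J$, and uniqueness from strict convexity of the $\gamma$-term composed with the affine control-to-state map. The only difference is how you identify the limit state. You use weak continuity of the bounded affine solution operator, relying on the uniqueness and energy bounds of the well-posedness theorem, whereas the paper extracts weak limits of $\boldsymbol{u}_n,\dot{\boldsymbol{u}}_n,p_n,\dot p_n$ and passes to the limit in the weak formulation, integrating by parts in time to recover the initial data; you also make explicit the weak closedness of $\mc{M}_{ad}$, which the paper leaves implicit.
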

\begin{proof}
	Since $J \LRp{\boldsymbol{u}, p, m} \ge 0$ for any admissible $\LRp{\boldsymbol{u}, p, m}$ with $m \in \mc{M}_{ad}$, there exists a sequence $\{ m_n \}_{n=1}^{\infty}$, $m_n \in \mc{M}_{ad}$ such that the sequence of admissible triples $\{ \LRp{\boldsymbol{u}_n, p_n, m_n} \}_{n=1}^{\infty}$ satisfies
	\algn{
		\nonumber \lim_{n \to \infty} J \LRp{\boldsymbol{u}_n, p_n, m_n} =\inf_{\LRp{\boldsymbol{u}, p, m} \text{: admissible, } m \in \mc{M}_{ad}} J \LRp{\boldsymbol{u}, p, m} =: J_{\text{inf}} \ge 0 .
	}
	%
	Since $\{ J \LRp{\boldsymbol{u}_n, p_n, m_n} \}$ is bounded, $\{m_n\}$ is a bounded sequence in $L_t^2 L^2(\Omega)$ because $\|m_n\|_{L_t^2 L^2(\Omega)}^2 \le \frac{2}{\gamma} J \LRp{\boldsymbol{u}_n, p_n, m_n}$. By the energy estimate,
	\algn{
		\nonumber &\|\boldsymbol{u}_n\|_{L_t^2 \boldsymbol{V}} + \|\dot{\boldsymbol{u}}_n\|_{L_t^2 \boldsymbol{V}} + \|p_n\|_{L_t^2 L^2(\Omega)} + \|\dot{p}_n\|_{L_t^2 L^2(\Omega)} + \|p_n\|_{L_t^2 Q} \\
		\nonumber &\le C \LRp{\|\boldsymbol{u}_0\|_{\boldsymbol{V}} + \|p_0\|_{Q} + \|\boldsymbol{f}\|_{L_t^2 L^2(\Omega)} + \|m\|_{L_t^2 L^2(\Omega)}},
	}
	therefore $\{\boldsymbol{u}_n\}$, $\{\dot{\boldsymbol{u}}_n\}$ are bounded in $L_t^2 \boldsymbol{V}$, $\{p_n\}$, $\{\dot{p}_n\}$ are bounded in $L_t^2 L^2(\Omega)$, and $\{p_n\}$ is bounded in $L_t^2 Q$. By Banach-Alaoglu theorem, there are weak limits $\boldsymbol{w}_0$, $\boldsymbol{w}_1$, $r_0$, $r_1$, $\tilde{r}_0$ such that
	\begin{align*}
		\boldsymbol{u}_n \rightharpoonup \boldsymbol{w}_0 \in L_t^2 \boldsymbol{V}, \quad \dot{\boldsymbol{u}}_n \rightharpoonup \boldsymbol{w}_1 \in L_t^2 \boldsymbol{V}, \quad p_n \rightharpoonup r_0 \in L_t^2Q, \quad \dot{p}_n \rightharpoonup r_1 \in L_t^2 L^2(\Omega).
	\end{align*}
	Since $\LRp{\boldsymbol{u}_n, p_n, m_n}$ is admissible,
	\algn{ \label{eq:admissible-seq}
		\int_0^T \LRp{a_{\boldsymbol{u}} \LRp{\dot{\boldsymbol{u}}_n (t), \boldsymbol{v} (t)} + b \LRp{\dot{p}_n (t), \boldsymbol{v} (t)}} \, dt &= \int_0^T \LRp{\boldsymbol{f} (t), \boldsymbol{v} (t)}_{\Omega} \, dt, \\
		\nonumber \int_0^T \LRp{b \LRp{q(t), \dot{\boldsymbol{u}}_n (t)} - \LRp{s_0 \dot{p}_n (t), q(t)}_{\Omega} - a_p\LRp{ p_n (t), q(t)}} \, dt &= \int_0^T \LRp{m_n (t), q(t)}_{\Omega} \, dt
	}
	for all $\LRp{\boldsymbol{v}, q} \in C^{\infty} \LRp{[0, T]; \boldsymbol{V} \times Q}$ such that $\boldsymbol{v} (T) = 0$, $q(T) = 0$. If we take $n \to \infty$, then
	\algn{
		\nonumber \int_0^T \LRp{B_0 \LRp{\LRp{\boldsymbol{w}_1 (t), r_1 (t)}, \LRp{\boldsymbol{v} (t), q(t)}} - a_p\LRp{ \tilde{r}_0 (t), q(t)}} \, dt \\
		\nonumber = \int_0^T \LRp{\LRp{\boldsymbol{f}(t), \boldsymbol{v} (t)}_{\Omega} + \LRp{\bar{m}(t), q(t)}_{\Omega}} \, dt.
	}
	By the integration by parts in time, \eqref{eq:admissible-seq} can be rewritten as
	\algn{
		\nonumber &\int_0^T \LRp{-B_0 \LRp{\LRp{\boldsymbol{u}_n (t), p_n (t)}, \LRp{\dot{\boldsymbol{v}}(t), \dot{q}(t)}} - a_p\LRp{ p_n (t), q (t)}} \, dt \\ 
		\notag
		& \ \ + a_{\boldsymbol{u}} \LRp{\boldsymbol{u}_0, \boldsymbol{v} (0)} + b \LRp{p_0, \boldsymbol{v} (0)} + b \LRp{q(0), \boldsymbol{u}_0} - \LRp{s_0 p_0, q (0)}_{\Omega} \\ \notag
		&= \int_0^T \LRp{\LRp{\boldsymbol{f} (t), \boldsymbol{v} (t)}_{\Omega} + \LRp{m_n (t), q (t)}_{\Omega}} \, dt,
	}
	where we use $\boldsymbol{u}_n (0) = \boldsymbol{u}_0$, $p_n (0) = p_0$. If $n \to \infty$, the above converges to
	\algn{ \label{eq:admissible-convergence}
		&\int_0^T \LRp{-B_0 \LRp{\LRp{\boldsymbol{w}_0 (t), r_0 (t)}, \LRp{\dot{\boldsymbol{v}}(t), \dot{q}(t)}} - a_p\LRp{ \tilde{r}_0 (t), q (t)}} \, dt \\ \notag
		& \ \ + a_{\boldsymbol{u}} \LRp{\boldsymbol{u}_0, \boldsymbol{v} (0)} + b \LRp{p_0, \boldsymbol{v} (0)} + b \LRp{q(0), \boldsymbol{u}_0} - \LRp{s_0 p_0, q (0)}_{\Omega} \\ \notag
		&= \int_0^T \LRp{\LRp{\boldsymbol{f} (t), \boldsymbol{v} (t)}_{\Omega} + \LRp{\bar{m} (t), q (t)}_{\Omega}} \, dt .
	}
    Since $\LRp{\boldsymbol{w}_0, r_0, \bar{m}}$ satisfies \eqref{eq:admissible-convergence}, $\LRp{\boldsymbol{w}_0, r_0, \bar{m}}$ is admissible. Moreover, the Sobolev embedding $H^1(0,T; X) \hookrightarrow C^0([0,T]; X)$ means that the evaluation operator at $t=T$ is bounded from $H^1(0,T; \boldsymbol{V})\times H^1(0,T;L^2(\Omega))$ to $\boldsymbol{V} \times L^2(\Omega)$, so $J ({\boldsymbol{w}}_0, r_0, \bar{m})$ and $J ({\boldsymbol{u}}_n, p_n, \bar{m}_n)$ are well-defined. In particular, $J$ is weakly lower semi-continuous, so 
	\algn{
		\nonumber J \LRp{{\boldsymbol{w}}_0, r_0, \bar{m}} \le \lim_{n \to \infty} \inf J \LRp{\boldsymbol{u}_n, p_n, m_n} = J_{\inf}.
	}
	For uniqueness, suppose that there are two solutions $\LRp{\bar{\boldsymbol{u}}_0, \bar{p}_0, \bar{m}_0}$ and $\LRp{\bar{\boldsymbol{u}}_1, \bar{p}_1, \bar{m}_1}$ such that
	$J \LRp{\bar{\boldsymbol{u}}_0, \bar{p}_0, \bar{m}_0} = J \LRp{\bar{\boldsymbol{u}}_1, \bar{p}_1, \bar{m}_1} = J_{\inf}$. 
	$\mathcal{M}_{ad}$ is convex, so $\bar{m} := \frac{1}{2} \LRp{\bar{m}_0 + \bar{m}_1} \in \mathcal{M}_{ad}$ with the admissible triple $(\bar{\boldsymbol{u}}, \bar{p}, \bar{m})$. Since the map $m \mapsto \LRp{\boldsymbol{u}, p, m}$ from $\mathcal{M}_{ad}$ to the set of admissible triple $\LRp{\boldsymbol{u}, p, m}$ is affine, and $J$ is strictly convex, so
	\algn{
		\nonumber J \LRp{\bar{\boldsymbol{u}}, \bar{p}, \bar{m}} < J \LRp{\bar{\boldsymbol{u}}_0, \bar{p}_0, \bar{m}_0}. 
	}
	This contradicts the assumption that $\bar{m}_0$ is a solution of the optimal control problem, so a solution of the optimal control problem is unique.
\end{proof}
\subsection{Necessary optimality conditions}
To derive the first order necessary optimality conditions, we use the adjoint operator of control-to-state operator $\mathcal{S}$. 
\begin{definition}
	Let $\mathcal{S}:\mathcal{M}_{ad} \rightarrow \boldsymbol{V} \times Q$ be the control-to-state operator that maps $m$ to $\boldsymbol{y}=(\boldsymbol{u},p)$, where $(\boldsymbol{u},p)$ is the weak solution to the continuous problem with the initial condition, boundary and interior sources all set to zero except the control $m$.
\end{definition}
Since $L^{2}(0,T;\boldsymbol{V} \times Q)$ is continuously embedded in $\boldsymbol{Y} := L^{2}(0,T;\boldsymbol{L}^2(\Omega) \times L^2(\Omega))$. Then $\mathcal{S}:\mathcal{M}_{ad} \rightarrow \boldsymbol{Y}$ is still linear and continuous. We can write the optimal control problem equivalently as to find $\underset{m \in \mathcal{M}_{ad}}{\min} J_r(m),$
where the reduced cost functional $J_r: \mathcal{M}_{ad} \rightarrow \mathbb{R}$ is defined as
\begin{align*}
	J_r(m) &=  \frac{\gamma}{2} \| m \|_{L^2(0,T; L^2(\Omega))}^2 + \frac{\alpha_{\boldsymbol{u}, C}}{2} \int_0^T \int_{\Omega} |\mathcal{S}_{\boldsymbol{u}}(m) \LRp{x, t} + \boldsymbol{u}_{0m} - \boldsymbol{u}_C \LRp{x, t}|^2 \, dx \, dt  \\
	&\quad + \frac{\alpha_{p, C}}{2} \int_0^T \int_{\Omega} |\mathcal{S}_p(m)\LRp{x, t} + p_{0m} - p_C (x, t)|^2 \, dx \, dt,
\end{align*}
where $\mathcal{S}_{\boldsymbol{u}}$ and $\mathcal{S}_{p}$ just denote the two components obtained from the control-to-state operator $\mathcal{S}$ corresponding to control $m$ and $(\boldsymbol{u}_{0m},p_{0m})$ denote a weak solution to the state system with the control $m=0$ and the initial condition, boundary and interior sources are set as desired. 
Since $\mathcal{S}:\mathcal{M}_{ad} \rightarrow \boldsymbol{Y}$ is linear, 
the cost functional $J_r:\mathcal{M}_{ad} \rightarrow \mathbb{R}$ is Frech\'et differentiable. 

The corresponding variational equations for the adjoint problem are given by
\begin{subequations}\label{eq:new-weakadj}
	\begin{align*}
		-a_{\boldsymbol{u}} (\dot{\boldsymbol{w}}, \boldsymbol{v}) - b(\dot{r},\boldsymbol{v}) &=  \alpha_{\boldsymbol{u}, C} ( \boldsymbol{u}-\boldsymbol{u}_C, \boldsymbol{v} )_{\Omega}, \\
		-b(q,\dot{\boldsymbol{w}}) + (s_0 \dot{r}, q)_{\Omega} - a_p (r, q) &= \alpha_{p, C}( p-p_C, q)_{\Omega},
	\end{align*}
\end{subequations}
for all $\boldsymbol{v} \in \boldsymbol{V}$ and $q\in Q$. Here, we use the notations $\boldsymbol{w}$ and $r$ to represent the corresponding displacement and pressure variables, respectively. The space-time integration by parts to derive the adjoint equation leads to the following terminal conditions ${\boldsymbol{w}}(\cdot,T) = \boldsymbol{0}, {r}(\cdot,T) = 0$. 
\begin{lemma}
	$\bar{m}$ is the optimal control solution to the OCP if and only if
	\begin{align}\label{ctsvi}
		\left(\gamma \bar{m} + r ,m-\bar{m}\right)_{(0,T) \times \Omega} \ge 0 \qquad \forall m \in \mathcal{M}_{ad}.
	\end{align}
\end{lemma}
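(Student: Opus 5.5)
The plan is to reduce the statement to the standard first-order characterization of minimizers of a convex, Fréchet differentiable functional over a closed convex set, and then identify the derivative of $J_r$ through the adjoint problem. Since $\mathcal{S}$ is linear and continuous into $\boldsymbol{Y}$, $J_r$ is a sum of a strictly convex quadratic term $\frac{\gamma}{2}\|m\|^2$ and convex quadratic tracking terms composed with an affine map. Hence $J_r$ is strictly convex and Fréchet differentiable. Because $\mathcal{M}_{ad}$ is convex, $\bar m$ minimizes $J_r$ over $\mathcal{M}_{ad}$ if and only if $J_r'(\bar m)(m-\bar m)\ge 0$ for all $m\in\mathcal{M}_{ad}$. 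Necessity follows by differentiating $\tau\mapsto J_r(\bar m+\tau(m-\bar m))$ at $\tau=0^+$. Sufficiency follows from convexity: $J_r(m)\ge J_r(\bar m)+J_r'(\bar m)(m-\bar m)$.

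Next I will compute the directional derivative. With $(\bar{\boldsymbol{u}},\bar p)$ the state for $\bar m$ and $(\boldsymbol{u}_\delta,p_\delta)=\mathcal{S}(m-\bar m)$, i.e.\ the solution with zero data, zero initial values and source $m-\bar m$,
\begin{align*}
J_r'(\bar m)(m-\bar m) = \gamma(\bar m,m-\bar m)_{(0,T)\times\Omega} + \alpha_{\boldsymbol{u},C}(\bar{\boldsymbol{u}}-\boldsymbol{u}_C,\boldsymbol{u}_\delta)_{(0,T)\times\Omega} + \alpha_{p,C}(\bar p-p_C,p_\delta)_{(0,T)\times\Omega}.
\end{align*}
The task is to show the last two terms equal $(r,m-\bar m)_{(0,T)\times\Omega}$, where $(\boldsymbol{w},r)$ solves the adjoint system with right-hand sides built from $(\bar{\boldsymbol{u}},\bar p)$ and terminal data $\boldsymbol{w}(T)=0$, $r(T)=0$.

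For this I take $\boldsymbol{v}=\boldsymbol{u}_\delta$, $q=p_\delta$ in the adjoint equations, sum, and integrate over $(0,T)$. The left side is
\begin{align*}
-\int_0^T\big[a_{\boldsymbol{u}}(\dot{\boldsymbol{w}},\boldsymbol{u}_\delta)+b(\dot r,\boldsymbol{u}_\delta)+b(p_\delta,\dot{\boldsymbol{w}})-(s_0\dot r,p_\delta)_\Omega+a_p(r,p_\delta)\big]\,dt .
\end{align*}
The time-derivative part is $-\int_0^T B_0((\dot{\boldsymbol{w}},\dot r),(\boldsymbol{u}_\delta,p_\delta))\,dt$. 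I integrate by parts in time using the symmetry of $B_0$. The boundary terms vanish because $(\boldsymbol{w},r)(T)=0$ and $(\boldsymbol{u}_\delta,p_\delta)(0)=0$. The result is $\int_0^T B_0((\dot{\boldsymbol{u}}_\delta,\dot p_\delta),(\boldsymbol{w},r))\,dt$.

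Now I use the state equations for $(\boldsymbol{u}_\delta,p_\delta)$ with test functions $(\boldsymbol{w},r)$. The first equation gives $a_{\boldsymbol{u}}(\dot{\boldsymbol{u}}_\delta,\boldsymbol{w})+b(\dot p_\delta,\boldsymbol{w})=0$. The second gives $b(r,\dot{\boldsymbol{u}}_\delta)-(s_0\dot p_\delta,r)_\Omega=a_p(p_\delta,r)+(m-\bar m,r)_\Omega$. The $a_p$ terms cancel by symmetry of $a_p$, leaving
\begin{align*}
\alpha_{\boldsymbol{u},C}(\bar{\boldsymbol{u}}-\boldsymbol{u}_C,\boldsymbol{u}_\delta)_{(0,T)\times\Omega}+\alpha_{p,C}(\bar p-p_C,p_\delta)_{(0,T)\times\Omega}=(m-\bar m,r)_{(0,T)\times\Omega}.
\end{align*}
Hence $J_r'(\bar m)(m-\bar m)=(\gamma\bar m+r,m-\bar m)_{(0,T)\times\Omega}$, which gives \eqref{ctsvi}.

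The main obstacle is rigor in these manipulations, which requires well-posedness and enough regularity of the adjoint problem. I would handle it by the substitution $t\mapsto T-t$. Under this change the adjoint system becomes a forward system of exactly the form \eqref{eq:new-weak-n-eqs} with $\boldsymbol{f}=\alpha_{\boldsymbol{u},C}(\bar{\boldsymbol{u}}-\boldsymbol{u}_C)$, $g=\alpha_{p,C}(\bar p-p_C)$ and zero initial data; the sign pattern should match after reversal, up to a sign convention on $r$ that I would check carefully. The existence theorem of Section~\ref{Existence of weak solutions} then gives $(\boldsymbol{w},r)\in H^1(0,T;\boldsymbol{V})\times(H^1(0,T;L^2(\Omega))\cap L^2(0,T;Q))$. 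This regularity suffices for the time integration by parts, since $B_0$ pairs $\boldsymbol{V}$ and $L^2$ and $H^1$-in-time functions are continuous. If needed, the identity can first be justified on Galerkin approximations and then passed to the limit.
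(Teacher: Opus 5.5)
Your proof is correct, but it takes a genuinely different route from the paper, whose entire proof is a one-line appeal to \cite[Theorem~3.7]{MR4410836}. You derive the result directly. Convexity of $J_r$ and of $\mathcal{M}_{ad}$ reduces optimality, in both directions, to $J_r'(\bar m)(m-\bar m)\ge 0$. A duality identity then identifies the tracking part of the derivative with $(r,m-\bar m)_{(0,T)\times\Omega}$: you test the adjoint system with $(\boldsymbol{u}_\delta,p_\delta)=\mathcal{S}(m-\bar m)$, integrate by parts in time using the symmetry of $B_0$, and insert the state equations tested with $(\boldsymbol{w},r)$ so that the $a_p$ terms cancel. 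The signs check out exactly with the paper's adjoint system and terminal data. Your hedge about a sign convention on $r$ is unnecessary: $\tilde{\boldsymbol{w}}(t):=\boldsymbol{w}(T-t)$, $\tilde r(t):=r(T-t)$ satisfy \eqref{eq:new-weak-n-eqs} with $\boldsymbol{f}=\alpha_{\boldsymbol{u},C}(\bar{\boldsymbol{u}}-\boldsymbol{u}_C)(T-\cdot)$, $g=\alpha_{p,C}(\bar p-p_C)(T-\cdot)$ and zero initial data, with no sign change. Note that the data must be time-reversed as well.

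Your route buys more than rigor for its own sake. By the paper's own description, the cited work treats the original elliptic--parabolic system with $s_0=0$. The lemma here concerns the differentiated symmetric formulation with $s_0\ge 0$, whose adjoint is structurally different (time derivatives appear in both adjoint equations). So the citation does not literally cover this statement. Your computation shows that it is precisely this $r$ that appears, and it obtains well-posedness of the adjoint from the paper's own existence theorem. The paper's route buys only brevity.

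Two details you should state explicitly. First, $\mathcal{S}$ must be taken on all of $L^2(0,T;L^2(\Omega))$, since $m-\bar m\notin\mathcal{M}_{ad}$ in general. Second, the weak equations must be extended from smooth test functions to $L^2$-in-time ones such as $(\boldsymbol{w},r)$ and $(\boldsymbol{u}_\delta,p_\delta)$. This is harmless because the solutions are $H^1$ in time, or it can be handled by your Galerkin fallback.
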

\begin{proof}
	The result is a direct consequence of \cite[Theorem~3.7]{MR4410836}.
\end{proof}
	Using the projection $\mathcal{P}_{\mathcal{M}_{ad}}:L^{2}(0,T;L^{2}(\Omega)) \rightarrow {\mathcal{M}_{ad}}$ (see \cite[Theorem~3.3]{MR3022219}) defined as:
	\begin{align}\label{mmproj}
		\mathcal{P}_{\mathcal{M}_{ad}}(r)(t,x):= \max\{m_{a},\min\{r(t,x), m_{b}\}\}, \quad \text{for a.e.} \ (t,x) \in (0,T) \times \Omega,
	\end{align}
the continuous \textit{variational inequality} \eqref{ctsvi} can be expressed as:
		\begin{align}\label{projcts}
			\bar{m} = \mathcal{P}_{\mathcal{M}_{ad}} \left(-\frac{1}{\gamma} r(\bar{m})\right).
	\end{align}
\begin{remark}
	Since $r \in L_t^{\infty} H^1(\Omega) \cap L_t^{2} H^1(\Omega)$, the projection \eqref{projcts} is well-defined for almost every $t\in (0,T)$. 
\end{remark}
%
\section{Discrete formulation}\label{Discrete formulation}
In this section we discuss discretizations of the optimal control problem with finite elements. 

\subsection{Spatial and space-time Discretizations}
To construct the finite element approximation, we consider a family of shape-regular partition $\{\mathcal{T}_h\}$ of $\overline{\Omega}$ into triangles or rectangles $K$ with diameter $h_K$. 

Consider a pair of finite elements $\boldsymbol{V}_h \times Q_h \subset \boldsymbol{V}\times Q$ which satisfies the inf-sup condition: there exists $C>0$ independent of $h$ such that 
\algn{
	\label{eq:Vh-Qh-inf-sup}
	\inf_{0\not = q \in Q_h} \sup_{\boldsymbol{v}\in \boldsymbol{V}_h} \frac{( q,  \dive \boldsymbol{v})_{\Omega}}{\|\boldsymbol{v}\|_{H^1(\Omega)} \|q\|_{L^2(\Omega)}} \ge C .
}
It is known that the MINI-element \cite[Sections 8.6 and 8.7]{BMO} and 
%
the generalized Taylor-Hood-$\mathbb{P}_l$ finite elements satisfy \eqref{eq:Vh-Qh-inf-sup}. 
%

For time discretization let $\Delta t$ represent the time step size, where $T=N\Delta t$ and $N$ is the positive integer. We introduce the notation $t_k=k\Delta t$ and $I_k := (t_{k}, t_{k+1}]$ for $k=0,\cdots,N-1$. For a continuous function $g$ defined on $[0,T]$, let $g^k=g(t_k)$. For a given sequence $\{g^k\}_{k\ge 0}$, the derivative is then approximated as follows:
\[\partial_\ta g^{k+1}:=\frac{g^{k+1}-g^k}{\Delta t}.\]
Now, we set
\begin{align*}
	\boldsymbol{V}_{hk} &:= \{\boldsymbol{v} \in L^2(0,T; \boldsymbol{V}_h): \boldsymbol{v}|_{I_{k}} \in \mathbb{P}_{0}(I_k; \boldsymbol{V}_h) \ \text{for} \ k = 0,\cdots,N-1\},
    \\
	Q_{hk} &:= \{q \in L^2(0,T; Q_h) : q|_{I_{k}} \in \mathbb{P}_{0}(I_k; Q_h) \ \text{for} \ k = 0,\cdots,N-1\},
\end{align*}
which means that $\boldsymbol{v} \in \boldsymbol{V}_{hk}$ and $q \in Q_{hk}$ are piecewise constant polynomials with respect to time.
\subsection{Finite element approximation}\label{Finite element approximation:}
In the dG(0) method, on each interval $I_k = (t_k, t_{k+1}]$, the discrete solution is sought as a constant in time, i.e., $(U,P)|_{I_k} \equiv (U^{k+1}, P^{k+1}) \in \boldsymbol{V}_h \times Q_h$. The scheme is obtained by integrating the equations over $I_k$ and testing against constant-in-time test functions, so that
\[
  \frac{1}{\Delta t}\int_{I_k} \partial_{\tau} \boldsymbol{u}\,dt \approx \frac{U^{k+1}-U^k}{\Delta t} =: \partial_\tau U^{k+1},
\]
and analogously for $P$. Throughout this section the notation
\[
  \LRp{g, v}_{I_k \times \Omega} := \int_{t_k}^{t_{k+1}}\int_{\Omega} g\, v\,dx\,dt = \Delta t \int_\Omega \bar{g}^{k+1} v\,dx,
  \qquad \bar{g}^{k+1} := \frac{1}{\Delta t}\int_{t_k}^{t_{k+1}} g\,dt,
\]
is used consistently for any source function $g$.

\medskip
{\bf Discretization of forward equation} The numerical solution at the $(k+1)-$th time step is defined inductively as follows: For $0\le k \le N-1$ find $(U^{k+1}, P^{k+1}) \in \boldsymbol{V}_h \times Q_h$ such that
\begin{subequations}\label{eq:fullydiscrete-eqs}
	\algn{
		\label{eq:fullydiscrete-eq1}
		a_{\boldsymbol{u}} \LRp{\partial_\ta U^{k+1}, \boldsymbol{v}} + b \LRp{\partial_\ta P^{k+1}, \boldsymbol{v}} &= \frac{1}{\Delta t} \LRp{{\boldsymbol{f}}, \boldsymbol{v}}_{I_k\times\Omega}, \\
		\label{eq:fullydiscrete-eq2}
		b \LRp{q, \partial_\ta U^{k+1}} - \LRp{s_0 \partial_\ta P^{k+1}, q}_{\Omega} - a_p\LRp{ P^{k+1}, q} &= \frac{1}{\Delta t}\LRp{m, q}_{I_k\times\Omega}, 
	}    
\end{subequations}
for all $\LRp{\boldsymbol{v}, q} \in \boldsymbol{V}_h \times Q_h$. 
\begin{lemma}
    The system \eqref{eq:fullydiscrete-eqs} has a unique solution.     
\end{lemma}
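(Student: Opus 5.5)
Since \eqref{eq:fullydiscrete-eqs} is, at each time level, a square linear system for $(U^{k+1},P^{k+1})\in \boldsymbol{V}_h\times Q_h$ with $(U^k,P^k)$ and the data given, I will proceed by induction on $k$. For a finite-dimensional square system, uniqueness is equivalent to existence. It therefore suffices to show that the homogeneous problem has only the trivial solution. Multiplying by $\Delta t$, the unknowns enter through the bilinear form
\[
\mathcal{A}\big((\boldsymbol{w},z),(\boldsymbol{v},q)\big) := a_{\boldsymbol{u}}(\boldsymbol{w},\boldsymbol{v}) + b(z,\boldsymbol{v}) + b(q,\boldsymbol{w}) - (s_0 z,q)_{\Omega} - \Delta t\, a_p(z,q),
\]
with $(\boldsymbol{w},z)=(U^{k+1},P^{k+1})$. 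All remaining terms (involving $U^k,P^k$ and the data) sit on the right-hand side. So I must show that $\mathcal{A}((\boldsymbol{w},z),(\boldsymbol{v},q))=0$ for all test pairs forces $\boldsymbol{w}=0$, $z=0$.

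I will mimic the energy argument of the a priori lemma, with the test pair $\boldsymbol{v}=\boldsymbol{w}+\delta_0 R_h z$ and $q=-z$. Here $R_h$ is the discrete right inverse supplied by \eqref{eq:Vh-Qh-inf-sup}, constructed exactly as $R_n$ after \eqref{eq:galerkin-inf-sup}. This choice gives
\[
0 = a_{\boldsymbol{u}}(\boldsymbol{w},\boldsymbol{w}+\delta_0 R_h z) + b(z,\boldsymbol{w}+\delta_0 R_h z) - b(z,\boldsymbol{w}) + (s_0 z,z)_{\Omega} + \Delta t\, a_p(z,z).
\]
By \eqref{eq:Babuska-Aziz}, applied on $\boldsymbol{V}_h\times Q_h$, the sum of the first four groups is at least $C_0(\|\boldsymbol{w}\|_{\boldsymbol{V}}^2+\|z\|_{L^2(\Omega)}^2)$. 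The last term is nonnegative. Hence $\boldsymbol{w}=0$ and $z=0$, which holds even when $s_0=0$.

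An alternative check avoids $R_h$. Testing with $(\boldsymbol{w},-z)$ gives $a_{\boldsymbol{u}}(\boldsymbol{w},\boldsymbol{w})+(s_0z,z)+\Delta t\,a_p(z,z)=0$, so $\boldsymbol{w}=0$ by coercivity of $a_{\boldsymbol{u}}$ (Korn, $|\Gamma_d|>0$) and $z=0$ by Poincaré ($|\Gamma_p|>0$). I would mention this as a remark.

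There is no real obstacle. The only point needing care is checking that \eqref{eq:Babuska-Aziz} transfers verbatim to $\boldsymbol{V}_h\times Q_h$ under \eqref{eq:Vh-Qh-inf-sup}. Existence then follows from uniqueness by the rank–nullity theorem, and the induction from $k=0$, starting with $(U^0,P^0)$ given, completes the proof.
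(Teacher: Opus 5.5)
Your proposal is correct, but your main argument is not the one the paper uses. Your ``alternative check'' is exactly the paper's proof. The paper reduces to the homogeneous one-step system and tests with $(\boldsymbol{v},q)=(U^{k+1},-P^{k+1})$, so the two $b$-terms cancel. It then reads off $a_{\boldsymbol{u}}(U^{k+1},U^{k+1})+(s_0P^{k+1},P^{k+1})_{\Omega}+\Delta t\,a_p(P^{k+1},P^{k+1})=0$. This needs only coercivity of $a_{\boldsymbol{u}}$ (Korn, $|\Gamma_d|>0$), definiteness of $a_p$ on $Q_h$ (Poincar\'e, $|\Gamma_p|>0$), and $\Delta t>0$; it never uses the inf-sup condition.

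Your primary route, with $\boldsymbol{v}=\boldsymbol{w}+\delta_0R_hz$ and $q=-z$, is also valid. The algebra reproduces the left-hand side of \eqref{eq:Babuska-Aziz} plus $\Delta t\,a_p(z,z)\ge 0$, and \eqref{eq:Babuska-Aziz} does carry over to $\boldsymbol{V}_h\times Q_h$ under \eqref{eq:Vh-Qh-inf-sup}. However, it spends an extra hypothesis, the discrete inf-sup condition, on what is only an invertibility statement.

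In return, your route gives a quantitative lower bound for the one-step operator. The constant $C_0$ is independent of $\Delta t$ and $s_0$, and the bound does not need the diffusion term or the definiteness of $a_p$ at all. The paper's energy identity, by contrast, controls the pressure only through $s_0\|z\|^2+\Delta t\|z\|_{a_p}^2$. That control degenerates as $\Delta t\to 0$ when $s_0=0$, so it yields invertibility but no uniform stability. For the lemma as stated, the paper's argument (your remark) is the more economical one. Your main argument is the natural choice if you want a $\Delta t$- and $s_0$-robust solvability estimate.
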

\begin{proof}
To establish the well-posedness of \eqref{eq:fullydiscrete-eqs}, we show that $(U^{k+1}, P^{k+1})$ is uniquely determined by the linear system (\ref{eq:fullydiscrete-eqs})  when $U^{k}, P^{k}, {\boldsymbol{f}}$ and $m$ are given. Equivalently, it is enough to show that $U^{k+1}=0$,  $P^{k+1}=0$ when $U^{k}, P^{k}, \boldsymbol{f}, m$ are zero. If we assume that $U^{k}, P^{k}, \boldsymbol{f}, m$ are zero, \eqref{eq:fullydiscrete-eqs} can be written as:
\begin{subequations}
	\algns{
		a_{\boldsymbol{u}} \LRp{U^{k+1}, \boldsymbol{v}} + b \LRp{P^{k+1}, \boldsymbol{v}} = 0,
		\\
		b \LRp{q, U^{k+1}} - \LRp{s_0 P^{k+1}, q}_{\Omega} - \Delta t\LRp{{\kappa} \nabla P^{k+1}, \nabla q}_{\Omega} = 0, 
	}    
\end{subequations}
for all $\LRp{\boldsymbol{v}, q} \in \boldsymbol{V}_h \times Q_h$. Choosing $ \boldsymbol{v}=U^{k+1}$ and $q=-P^{k+1}$ and adding both equations, we have
\begin{align*}
	0\le  a_{\boldsymbol{u}} \LRp{U^{k+1}, U^{k+1}}+  \LRp{s_0 P^{k+1}, P^{k+1}}_{\Omega} +\Delta t a_p\LRp{ P^{k+1}, P^{k+1}}=0.
\end{align*}
This implies that $U^{k+1}=0 = P^{k+1}$.    
\end{proof}


{\bf Discrete optimal control problem} {Consider the discrete optimal control problem: Find $m \in \mathcal{M}_{ad}$ such that 
	\begin{align}
	\label{discr_obj} 
    \min_{m \in \mathcal{M}_{ad}} J_h \LRp{U, P, m} := \frac{\alpha_{\boldsymbol{u}, C} }{2} \sum_{k=0}^{N-1} \|U^{k+1} - \boldsymbol{u}_C\|_{L^{2}(I_k; L^2(\Omega))}^2
    \\ \notag
		  +\hspace{-0.45mm} \frac{\alpha_{p, C} }{2}\sum_{k=0}^{N-1} \|P^{k+1} - p_C \|_{L^{2}(I_k; L^2(\Omega))}^2
          +\hspace{-0.45mm} \frac{\gamma}{2} \|m\|_{L^{2}(0,T; L^2(\Omega))}^2,
	\end{align}
    with $(U, P) \in \boldsymbol{V}_{hk} \times Q_{hk}$ 
	subject to the fully discrete state scheme \eqref{eq:fullydiscrete-eqs}.
    We use a variational discretization approach introduced in \cite{MR2122182} for the controls, i.e., we do not fix a finite dimensional approximation of the control space. 
\begin{theorem}
	There exists a unique solution in $\boldsymbol{V}_{hk} \times Q_{hk} \times \mathcal{M}_{ad}$ to the discrete optimal control problem \eqref{eq:fullydiscrete-eqs}-\eqref{discr_obj}.
\end{theorem}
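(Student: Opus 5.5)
The plan is to reduce the discrete problem to minimizing a strictly convex, coercive, continuous quadratic functional over the closed convex set $\mathcal{M}_{ad}$. After that, existence follows from weak lower semicontinuity and uniqueness from strict convexity, mirroring the proof of Theorem~\ref{existence} but in a simpler, essentially finite-dimensional setting for the state.

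First, define the discrete control-to-state map $m \mapsto (U,P) \in \boldsymbol{V}_{hk}\times Q_{hk}$ through \eqref{eq:fullydiscrete-eqs}. By the preceding lemma, the scheme has a unique solution for each data set. The right-hand sides depend on $m$ only through the time averages $\bar m^{k+1}$, tested against $Q_h$. Hence the map is affine in $m$: it is $m\mapsto \mathcal{S}_{hk}m + (U_0,P_0)$, where $(U_0,P_0)$ is the discrete solution with $m=0$ and the given $\boldsymbol{f}$ and initial data, and $\mathcal{S}_{hk}$ is linear.

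Next, show $\mathcal{S}_{hk}$ is bounded from $L^2(0,T;L^2(\Omega))$ into $\boldsymbol{V}_{hk}\times Q_{hk}$ with the $L^2(0,T;L^2)$ norm. I would derive a crude discrete energy bound by testing \eqref{eq:fullydiscrete-eq1} with $\boldsymbol{v}=\partial_\tau U^{k+1}$ and \eqref{eq:fullydiscrete-eq2} with $q=-\partial_\tau P^{k+1}$, and then adding. The $b$-terms cancel, and the identity $a_p(P^{k+1},P^{k+1}-P^k)\ge \tfrac12(\|P^{k+1}\|_{a_p}^2-\|P^k\|_{a_p}^2)$ yields a telescoping sum. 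This gives
\[
\sum_k \Delta t\bigl(\|\partial_\tau U^{k+1}\|_{a_{\boldsymbol u}}^2+\|s_0^{1/2}\partial_\tau P^{k+1}\|^2\bigr)+\tfrac12\|P^N\|_{a_p}^2
\]
controlled by the data. The term $(\bar m^{k+1},\partial_\tau P^{k+1})$ is not controlled when $s_0=0$. Therefore I would instead mimic the Babu\v{s}ka--Aziz test $\boldsymbol{v}=\partial_\tau U^{k+1}+\delta_0R_h\partial_\tau P^{k+1}$ from \eqref{eq:Babuska-Aziz}, using the inf-sup condition \eqref{eq:Vh-Qh-inf-sup}.

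In fact, since only boundedness with $h,\Delta t$ fixed is needed, a simpler route suffices. On the finite-dimensional space $\boldsymbol{V}_h\times Q_h$, each step is a linear map of $(U^k,P^k,\bar{\boldsymbol f}^{k+1},\bar m^{k+1})$. The map $m\mapsto \bar m^{k+1}$ composed with the $L^2$-projection onto $Q_h$ is bounded by Cauchy--Schwarz. A finite composition of bounded linear maps is bounded, so $\mathcal{S}_{hk}$ is continuous. This is the step I would handle by the finite-dimensional argument to avoid the $s_0=0$ subtlety.

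Then the reduced functional
\[
j_h(m)=J_h(\mathcal{S}_{hk}m+(U_0,P_0),m)
\]
is continuous and convex on $L^2(0,T;L^2(\Omega))$, since it is a sum of squared norms of affine functions of $m$. It is strictly convex because of the term $\frac{\gamma}{2}\|m\|^2$ with $\gamma>0$, and it is coercive. $\mathcal{M}_{ad}$ is nonempty, closed, convex and bounded in the Hilbert space. Take a minimizing sequence; it is bounded, so a subsequence converges weakly to some $\bar m\in\mathcal{M}_{ad}$, because closed convex sets are weakly closed. Continuity plus convexity gives weak lower semicontinuity, so $\bar m$ is a minimizer, with state $(\bar U,\bar P)$ determined uniquely. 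Uniqueness then follows exactly as in Theorem~\ref{existence}: the midpoint of two distinct minimizers would strictly decrease $j_h$. The only mildly delicate point is the boundedness of $\mathcal{S}_{hk}$, which the finite-dimensional argument above settles.
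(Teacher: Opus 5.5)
Your proof is correct and follows essentially the paper's route, which only sketches the argument by pointing back to the continuous case. That route is the direct method on the nonempty, closed, convex, bounded set $\mathcal{M}_{ad}$, with uniqueness coming from the strict convexity supplied by the $\frac{\gamma}{2}\|m\|^2$ term and the affine dependence of the discrete state on $m$. Your observation that the discrete control-to-state map is a bounded affine map into a finite-dimensional space (composing the uniquely solvable time steps) is exactly what the paper's phrase ``finite-dimensional state compactness'' stands for, and it correctly lets you avoid the uncontrolled $(\bar m^{k+1},\partial_\tau P^{k+1})$ term in the naive energy estimate when $s_0=0$.
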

\begin{proof}
	The proof follows along the same lines as the continuous case discussed in Section~\ref{Existence of optimal control} with finite-dimensional state compactness.
\end{proof}
\begin{theorem}
	(Discrete first order optimality system). A control $m \in \mc{M}_{ad}$ and the corresponding associated state $(U, P) \in \boldsymbol{V}_{hk} \times Q_{hk}$ is the optimal solution to the problems \eqref{discr_obj}, \eqref{eq:fullydiscrete-eqs} if and only if there exists an adjoint state $(W, R) \in \boldsymbol{V}_{hk} \times Q_{hk}$ which satisfies the following discrete adjoint equation:
	\begin{subequations}
		\label{eq:fullydiscreteadj-eqs11}
		\begin{align}
			\label{eq:fullydiscreteadj-eq12}
			-a_{\boldsymbol{u}} \LRp{\partial_\ta W^{k+1}, \boldsymbol{v}} - b \LRp{\partial_\ta R^{k+1}, \boldsymbol{v}} = \frac{1}{\Delta t} \alpha_{\boldsymbol{u}, C} (U^{k+1} - \boldsymbol{u}_{C}, \boldsymbol{v})_{I_k \times \Omega},  
			\\
			\label{eq:fullydiscreteadj-eq23}
			-b \LRp{q, \partial_\ta {W}^{k+1}} + \LRp{s_0 \partial_\ta {R}^{k+1}, q}_{\Omega} - a_p\LRp{ {R}^{k}, q} = \frac{1}{\Delta t} \alpha_{p, C} ({P}^{k+1} - p_C, q)_{I_k \times \Omega},
		\end{align}
	\end{subequations}
	for all $\LRp{\boldsymbol{v}, q} \in \boldsymbol{V}_h \times Q_h$, $k=0,1,\cdots,N-1$, with the terminal conditions $W^N = 0, \quad R^N = 0$ and the discrete \textit{variational inequality} written as 
		\begin{align}
			\label{eq:varineq}
			\sum_{k=0}^{N-1}(\gamma {m}+ R^{k},z - {m})_{I_k\times \Omega} \ge 0 \qquad \forall z \in \mc{M}_{ad}. 
		\end{align}
\end{theorem}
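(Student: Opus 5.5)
The plan is the standard reduced-functional argument for the fully discrete problem. The discrete control-to-state map $m \mapsto (U,P)$, defined by \eqref{eq:fullydiscrete-eqs} with the given $\boldsymbol{f}$ and initial data, is affine and continuous from $L^2(0,T;L^2(\Omega))$ into $\boldsymbol{V}_{hk}\times Q_{hk}$. This follows from the unique solvability lemma above, since each step is an invertible linear system in finite dimensions. Consequently the reduced functional $j_h(m):=J_h(U(m),P(m),m)$ is convex, quadratic and Fr\'echet differentiable, with strong convexity coming from the $\gamma$-term.

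Because $\mathcal{M}_{ad}$ is closed and convex, $m$ is the minimizer if and only if
\begin{align*}
j_h'(m)(z-m)\ge 0 \qquad \forall z\in\mathcal{M}_{ad}.
\end{align*}
Sufficiency follows from convexity and necessity from the directional derivative along $m+\theta(z-m)$. Writing $\delta U^{k},\delta P^{k}$ for the increments of the state in the direction $z-m$, we get
\begin{align*}
j_h'(m)(z-m)=\sum_{k=0}^{N-1}\Big[\alpha_{\boldsymbol{u},C}(U^{k+1}-\boldsymbol{u}_C,\delta U^{k+1})_{I_k\times\Omega}+\alpha_{p,C}(P^{k+1}-p_C,\delta P^{k+1})_{I_k\times\Omega}\Big]+\gamma(m,z-m)_{(0,T)\times\Omega}.
\end{align*}
The increments solve the linearized scheme \eqref{eq:fullydiscrete-eqs} with $\boldsymbol{f}=0$, zero initial data, and right-hand side $\frac1{\Delta t}(z-m,q)_{I_k\times\Omega}$.

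Next, define $(W^k,R^k)$ backward from $W^N=R^N=0$ by \eqref{eq:fullydiscreteadj-eqs11}. Each backward step is uniquely solvable by the same energy argument as in the forward lemma: test with $\boldsymbol{v}=W^k$, $q=-R^k$ (sign arranged) to obtain a positive definite form $a_{\boldsymbol{u}}+s_0(\cdot,\cdot)+\Delta t\,a_p$. To eliminate the state increments, proceed in four steps.
\begin{enumerate}
\item Test \eqref{eq:fullydiscreteadj-eq12} with $\boldsymbol{v}=\delta U^{k+1}$ and \eqref{eq:fullydiscreteadj-eq23} with $q=\delta P^{k+1}$, multiply by $\Delta t$, and sum over $k$.
\item Apply summation by parts in time,
\begin{align*}
\sum_{k=0}^{N-1}(W^{k+1}-W^k)\cdot \delta U^{k+1}=-\sum_{k=0}^{N-1}W^{k}\cdot(\delta U^{k+1}-\delta U^{k})+\big[W^N\delta U^N-W^0\delta U^0\big],
\end{align*}
where both boundary terms vanish since $W^N=0$ and $\delta U^0=0$, and similarly for $R$ and $\delta P$.
\item This transfers the discrete time derivatives onto the increments and shifts the adjoint index to $k$. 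The symmetric terms $a_{\boldsymbol{u}}$, $b$ and $s_0$ then reproduce the left-hand side of the forward increment equations tested with $(W^k,R^k)$.
\item The diffusion term $a_p(R^k,\delta P^{k+1})$ in \eqref{eq:fullydiscreteadj-eq23} matches $a_p(\delta P^{k+1},R^k)$ in the forward scheme; this is exactly why the adjoint uses $R^k$ rather than $R^{k+1}$.
\end{enumerate}
Using the forward increment equations with test functions $(W^k,-R^k)$, the tracking terms therefore equal $\sum_k (R^k,z-m)_{I_k\times\Omega}$, up to a global sign fixed by the sign conventions of $b$ and the right-hand side.

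Combining these gives $j_h'(m)(z-m)=\sum_k(\gamma m+R^k,z-m)_{I_k\times\Omega}$, which is \eqref{eq:varineq}. Conversely, given such an adjoint and \eqref{eq:varineq}, the same identity gives $j_h'(m)(z-m)\ge0$, and convexity yields optimality.

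The main obstacle is the index and sign bookkeeping in the summation by parts. The forward scheme is implicit in $P^{k+1}$, so the adjoint must be evaluated at $R^k$, and the pairing of the $b$ and $s_0$ terms must cancel exactly; I would verify this first on a single step with $N=1$. A secondary point is confirming that the adjoint backward steps are solvable in the stated form.
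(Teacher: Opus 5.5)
Your proposal is correct and takes the same route the paper only sketches: the reduced functional, convexity, and a discrete transposition of the backward Euler/dG(0) scheme, with $R^k$ in $a_p$ forced by the implicit $P^{k+1}$. The sign bookkeeping you flagged closes exactly, with no global-sign ambiguity. After summation by parts, the left-hand side is $\Delta t\sum_k$ of the increment equations tested with $(\boldsymbol{v},q)=(W^k,R^k)$, and this gives $+\sum_{k}(R^k,z-m)_{I_k\times\Omega}$, exactly as in \eqref{eq:varineq}.
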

\begin{proof}
	Similarly to the proof in the continuous counterpart, we can obtain that the optimality of $m$ is equivalent to the existence of a discrete adjoint state $({W}^{k}, {R}^{k})$ which satisfies \eqref{eq:fullydiscreteadj-eqs11} and the discrete \textit{variational inequality}. The $R^k$ in $a_p(\cdot, \cdot)$ follows from transposing dG(0) scheme in the pressure equation. 
\end{proof}
The adjoint state system \eqref{eq:fullydiscreteadj-eqs11} can be written as:
\begin{subequations}
	\label{eq:fullydiscreteadj-eqs11a}
	\begin{align}
		\label{eq:fullydiscreteadj-eq12a}
		  a_{\boldsymbol{u}} \LRp{W^{k}, \boldsymbol{v}} + b \LRp{R^{k}, \boldsymbol{v}} &=  \alpha_{\boldsymbol{u}, C}  (U^{k+1} - \boldsymbol{u}_{C}, \boldsymbol{v})_{I_k \times \Omega} \\
		  \notag &\quad \ + a_{\boldsymbol{u}} \LRp{W^{k+1}, \boldsymbol{v}} + b \LRp{R^{k+1}, \boldsymbol{v}},
		\\
		\label{eq:fullydiscreteadj-eq23a}
		b \LRp{q,W^{k}} - \LRp{s_0 R^{k}, q}_{\Omega} - \Delta t a_p\LRp{ R^{k}, q} &= \alpha_{p, C}  (P^{k+1} - p_C, q)_{I_k \times \Omega}  \\
		\notag &\quad \ + b \LRp{q, W^{k+1}} - \LRp{s_0 R^{k+1}, q}_{\Omega},
	\end{align}    
\end{subequations}
for $k=0,1,\cdots,N-1$.
 The \textit{variational inequality} \eqref{eq:varineq} combined with the state and adjoint systems \eqref{eq:fullydiscrete-eqs} and \eqref{eq:fullydiscreteadj-eqs11a}, respectively, form a fully discrete optimality system for the optimal control problem. The discrete control variable is obtained by employing a discrete version of the projection formula \eqref{mmproj} as (see \cite[Section~4]{MR2644299} for more details):
 \begin{align}\label{proj}
 	{m |_{I_k \times \Omega} = \mathcal{P}_{\mathcal{M}_{ad}} \left(-\frac{1}{\gamma}R^{k}\right) \qquad \text{for} \ k=0, 1,\cdots,N-1.}
 \end{align} 
 An important consequence from this formula is that the solution $m \in \mathcal{M}_{ad}$ of the discrete optimal control problem \eqref{discr_obj} is constant on $I_k$, so $m^k$ is well-defined.
 
 This discrete optimality system is solved using a time dependent version of standard fixed point algorithm described in Sec.~\ref{Numerical Experiments}.}

\section{A priori error analysis}\label{A priori Error analysis}
In this section, we derive the a priori error estimates of the fully discrete scheme proposed in section~\ref{Discrete formulation}. Throughout this section, the right-hand side $\boldsymbol{f}$, the initial data $(\boldsymbol{u}(0), p(0)) \in \boldsymbol{V} \times Q$, the observation data $\boldsymbol{u}_C$, $p_C$, and the coefficients $\alpha_{p,C}$, $\alpha_{u,C}$, $\gamma$ are fixed, and all optimal control problems use the same initial/observation data and the coefficients. Moreover, all discrete forward equations use the same numerical initial data $(U^0, P^0) \in \boldsymbol{V}_h \times Q_h$ which satisfying 
\begin{align}
    \label{eq:initial-data-assumption}
    \|\boldsymbol{u}(0)-U^0\|_{H^1(\Omega)}+\|p(0)-P^0\|_{H^1(\Omega)}\le C h^{s}(\|\boldsymbol{u}(0)\|_{H^{1+s}(\Omega)}+\|p(0)\|_{H^{1+s}(\Omega)}), \quad s >0.
\end{align}
Note that $(U^0, P^0)$ do not need to satisfy the equation \eqref{sys1} for $\tilde{\boldsymbol{f}}(0)$. 

\subsection{Analysis of auxiliary forward equation}
Let us denote $\Pi_h^{\boldsymbol{u}} : \boldsymbol{V} \to \boldsymbol{V}_h$ and $\Pi_h^{p}: Q \to Q_h$ the elliptic projections defined by 
\begin{align}
    \label{eq:elasticity-elliptic-projection}
    a_{\boldsymbol{u}} (\Pi_h^{\boldsymbol{u}} \boldsymbol{v}, \boldsymbol{w}) &=     a_{\boldsymbol{u}} ( \boldsymbol{v}, \boldsymbol{w}) \qquad \forall \boldsymbol{w} \in \boldsymbol{V}_h,
    \\
    \label{eq:poisson-elliptic-projection}
    a_p( \Pi_h^p q, r) &= a_p( q, r)_{\Omega} \qquad \forall r \in Q_h . 
\end{align}
It is well-known that 
\begin{align}
    \label{eq:elliptic-projection-approx}   
    \| \boldsymbol{v} - \Pi_h^{\boldsymbol{u}} \boldsymbol{v} \|_{a_{\boldsymbol{u}}} \le Ch^{s} \|\boldsymbol{v}\|_{H^{1+s}(\Omega)}, 
    \qquad 
    \| q - \Pi_h^{p} q \|_{a_{p}} \le Ch^{s} \|q\|_{H^{1+s}(\Omega)} 
\end{align}
for $s \ge 0$. 
%
\begin{lemma}\label{lemma:aux-forward-error}
	Let $({\boldsymbol{u}},{p},{m})$ be the solution of the optimal control problem \eqref{obj_fun}. Suppose that $(U({m}), P({m})) \in \boldsymbol{V}_{hk} \times Q_{hk}$ is a solution of \eqref{eq:fullydiscrete-eqs} for the continuous optimal control ${m}$ with numerical initial data $(U^0, P^0)$ satisfying \eqref{eq:initial-data-assumption} for some $0<s\le 1$. Assume also that 
    \[
        \boldsymbol{u}\in H^1(0,T;\boldsymbol{H}^{1+s}(\Omega)), \quad  p\in H^1(0,T;H^{1+s}(\Omega)).
    \]
    Then, 
	\begin{align}
        \label{eq:forward-aux-estimate}
		&\max_{1\le k \le N} \LRp{\| \boldsymbol{u}^k - U^k(m)\|_{H^1(\Omega)} + \| p^k - P^k(m)\|_{L^2(\Omega)} } 
        \\
        \notag
		&+ \| \boldsymbol{u} - U(m)\|_{L^2(0,T; H^1(\Omega))} + \| p - P(m)\|_{L^2(0,T; L^2(\Omega))} 
        \\
        \notag
        &\le h^{s}(\|\boldsymbol{u}(0)\|_{H^{1+s}(\Omega)}+\|p(0)\|_{H^{1+s}(\Omega)}) 
        \\
        \notag
        &\quad + C(h^s+\Delta t)
        \left(
        \|\boldsymbol{u}\|_{H^1(0,T;H^{1+s}(\Omega))}
        +
        \|p\|_{H^1(0,T;H^{1+s}(\Omega))}
        \right) .
    \end{align}
\end{lemma}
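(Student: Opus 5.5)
We split the error with the elliptic projections \eqref{eq:elasticity-elliptic-projection}--\eqref{eq:poisson-elliptic-projection}:
\begin{align*}
\boldsymbol{u}^k - U^k &= (\boldsymbol{u}^k - \Pi_h^{\boldsymbol{u}}\boldsymbol{u}^k) + (\Pi_h^{\boldsymbol{u}}\boldsymbol{u}^k - U^k) =: \boldsymbol{\rho}_u^k + \boldsymbol{\theta}_u^k,\\
p^k - P^k &= (p^k - \Pi_h^p p^k) + (\Pi_h^p p^k - P^k) =: \rho_p^k + \theta_p^k .
\end{align*}
By \eqref{eq:elliptic-projection-approx}, the projection errors $\boldsymbol{\rho}_u$, $\rho_p$ are bounded by $Ch^s$ times the $H^{1+s}$ norms. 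Pointwise in time we use $H^1(0,T)\hookrightarrow C^0([0,T])$ to control $\max_k$ by the $H^1(0,T;H^{1+s})$ norms. For the $L^2(0,T)$ norms of $\boldsymbol{u}-U$ and $p-P$ we compare the piecewise-constant $U$ with $\boldsymbol{u}^{k+1}$ on $I_k$; the resulting $\boldsymbol{u}-\boldsymbol{u}^{k+1}$ is $O(\Delta t)\|\dot{\boldsymbol{u}}\|_{L^2(I_k;H^1)}$. So the work is in bounding the discrete errors $(\boldsymbol{\theta}_u^k,\theta_p^k)\in\boldsymbol{V}_h\times Q_h$.

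\emph{Error equations.} Integrate the weak equations \eqref{eq:new-weak-n-eqs} (with $g=m$) over $I_k$ against constant-in-time test functions and subtract \eqref{eq:fullydiscrete-eqs}. Using $\frac1{\Delta t}\int_{I_k}\dot{\boldsymbol{u}}=\partial_\tau\boldsymbol{u}^{k+1}$ and the defining property of $\Pi_h^{\boldsymbol{u}}$ (which removes $a_{\boldsymbol{u}}(\partial_\tau\boldsymbol{\rho}_u,\cdot)$) and of $\Pi_h^p$, we get
\begin{align*}
a_{\boldsymbol{u}}(\partial_\tau\boldsymbol{\theta}_u^{k+1},\boldsymbol{v}) + b(\partial_\tau\theta_p^{k+1},\boldsymbol{v}) &= -b(\partial_\tau\rho_p^{k+1},\boldsymbol{v}),\\
b(q,\partial_\tau\boldsymbol{\theta}_u^{k+1}) - (s_0\partial_\tau\theta_p^{k+1},q) - a_p(\theta_p^{k+1},q) &= -b(q,\partial_\tau\boldsymbol{\rho}_u^{k+1}) + (s_0\partial_\tau\rho_p^{k+1},q) + a_p\Big(p^{k+1}-\tfrac1{\Delta t}\int_{I_k}p,\,q\Big).
\end{align*}
The last term is the backward Euler consistency error. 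It is bounded by $\sqrt{\Delta t}\,\|\dot p\|_{L^2(I_k;H^1)}$ in the $a_p$-norm. The $\partial_\tau\rho$ terms are bounded in $L^2$ by $\Delta t^{-1/2}h^s\|\dot{\boldsymbol{u}}\|_{L^2(I_k;H^{1+s})}$ (and the analogue for $p$), because the projections commute with time averaging.

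\emph{Discrete energy argument.} We mimic the proof of \eqref{eq:energy-estimate2} in discrete time. Test with $\boldsymbol{v}=\partial_\tau\boldsymbol{\theta}_u^{k+1}+\delta_0 R_h\partial_\tau\theta_p^{k+1}$ and $q=-\partial_\tau\theta_p^{k+1}$, add the equations, and apply \eqref{eq:Babuska-Aziz} (valid on $\boldsymbol{V}_h\times Q_h$ by \eqref{eq:Vh-Qh-inf-sup}). Use $a_p(\theta^{k+1},\partial_\tau\theta^{k+1})\ge\frac1{2\Delta t}(\|\theta^{k+1}\|_{a_p}^2-\|\theta^k\|_{a_p}^2)$, multiply by $\Delta t$, and sum over $k$. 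This gives
\[
C_0\sum_k\Delta t\big(\|\partial_\tau\boldsymbol{\theta}_u^{k+1}\|_{\boldsymbol{V}}^2+\|\partial_\tau\theta_p^{k+1}\|^2\big)+\tfrac12\|\theta_p^n\|_{a_p}^2 \le \tfrac12\|\theta_p^0\|_{a_p}^2+\sum_k\Delta t\,(\text{RHS terms}).
\]

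\emph{The consistency term.} This is the main obstacle. The term $a_p(p^{k+1}-\bar p^{k+1},\partial_\tau\theta_p^{k+1})$ cannot simply be absorbed, since $\partial_\tau\theta_p$ is only controlled in $L^2$, not $H^1$. We would first try summation by parts in $k$, moving the difference quotient onto $p^{k+1}-\bar p^{k+1}$. This produces boundary terms controlled by $\|\theta_p^n\|_{a_p}$ (absorbed via Young's inequality) and an interior sum involving second differences of $p$, which is bounded by $\Delta t\,\|\dot p\|_{L^2(0,T;H^1)}$ after the regrouping. If that fails, the fallback is to test the second equation with $q=\theta_p^{k+1}$ instead and run a separate $L^2(0,T;H^1)$ argument, at the cost of a Gr\"onwall step. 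The remaining right-hand-side terms are handled by Cauchy--Schwarz and Young's inequality, absorbing into the left side.

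\emph{Recovering the stated norms.} The initial error is $\theta^0=\Pi_h(\boldsymbol{u}(0),p(0))-(U^0,P^0)$, which by \eqref{eq:initial-data-assumption} and \eqref{eq:elliptic-projection-approx} is $O(h^s)$; this yields the explicit first term of the estimate. We then recover $\max_k\|\boldsymbol{\theta}_u^k\|_{H^1}+\|\theta_p^k\|_{L^2}$ from the discrete fundamental theorem of calculus, exactly as in the proof of \eqref{eq:energy-estimate1}: $\boldsymbol{\theta}^n=\boldsymbol{\theta}^0+\sum_k\Delta t\,\partial_\tau\boldsymbol{\theta}^{k+1}$, followed by Cauchy--Schwarz. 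Finally, the triangle inequality combines this with the projection and time-interpolation errors to give \eqref{eq:forward-aux-estimate}.
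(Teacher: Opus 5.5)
There is a genuine gap, and it is exactly the step you flagged as the main obstacle. In your $\partial_\tau$-tested energy identity the backward Euler residual $d^{k+1}:=p^{k+1}-\frac{1}{\Delta t}\int_{I_k}p\,dt$ enters as $a_p(d^{k+1},\partial_\tau\theta_p^{k+1})$. Under $p\in H^1(0,T;H^{1+s}(\Omega))$ this residual is small only in the $a_p$-dual norm, $\|d^{k+1}\|_{a_p}\le C\Delta t^{1/2}\|\dot p\|_{L^2(I_k;H^1(\Omega))}$. Your left-hand side, however, controls $\partial_\tau\theta_p$ only in $L^2(\Omega)$.

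Summation by parts does not repair this. The boundary term $a_p(d^n,\theta_p^n)$, absorbed into $\frac12\|\theta_p^n\|_{a_p}^2$, only gives $\|\theta_p^n\|_{a_p}\le C\Delta t^{1/2}\|\dot p\|_{L^2(I_{n-1};H^1(\Omega))}$. The interior sum $\sum_k a_p(d^{k+1}-d^k,\theta_p^k)$ would be $O(\Delta t)$ only if $\|d^{k+1}-d^k\|_{a_p}=O(\Delta t^{3/2})$, and that needs $\ddot p\in L^2(0,T;H^1(\Omega))$. With $\dot p$ alone there is no cancellation between $d^{k+1}$ and $d^k$, so, contrary to your claim, the sum carries no positive power of $\Delta t$.

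No regrouping can rescue this either. For $0<s<1$ the very quantities on your left-hand side, $\max_n\|\theta_p^n\|_{a_p}$ and $(\Delta t\sum_k\|\partial_\tau\theta_p^k\|_{L^2(\Omega)}^2)^{1/2}$, are in general only of size $\Delta t^{(1+s)/2}$ under the stated regularity. A one-mode heat-equation computation, with $\dot p$ concentrated on a single interval and diffusion eigenvalue of order $\Delta t^{-1}$, exhibits this. Consequently your final step, which recovers the max-norms from $\ell^2$ control of $\partial_\tau\theta$ by the discrete fundamental theorem of calculus, loses the rate as well. For $s=1$ you could instead integrate by parts in space and pair $\dive(\kappa\nabla d^{k+1})\in L^2(\Omega)$ with $\partial_\tau\theta_p^{k+1}$, but that does not cover $0<s<1$.

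Your fallback, testing the second error equation with $q=\theta_p^{k}$, is the right choice and is what the paper does. As written, though, it omits the idea that makes it close. With this test function the coupling term $b(\theta_p^k,\partial_\tau\theta_{\boldsymbol u}^k)$ has no partner: testing the first equation with $\theta_{\boldsymbol u}^k$ produces $b(\partial_\tau\theta_p^k,\theta_{\boldsymbol u}^k)$, which neither cancels nor telescopes against it. The paper proceeds as follows.
\begin{enumerate}
\item It sums the first error equation in time, obtaining the algebraic relation $A_h\theta_{\boldsymbol u}^k+B_h\theta_p^k=H^k:=A_h\theta_{\boldsymbol u}^0+B_h\theta_p^0+\Delta t\sum_{i\le k}F^i$. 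Here $\|H^k\|_{\boldsymbol V_h'}$ is controlled by the initial error and $\Delta t\sum_i\|F^i\|_{\boldsymbol V_h'}^2$.
\item Substituting $B_h\theta_p^k=H^k-A_h\theta_{\boldsymbol u}^k$ turns the coupling term into $\langle H^k,\partial_\tau\theta_{\boldsymbol u}^k\rangle-a_{\boldsymbol u}(\theta_{\boldsymbol u}^k,\partial_\tau\theta_{\boldsymbol u}^k)$, which supplies the $\max_k\|\theta_{\boldsymbol u}^k\|_{a_{\boldsymbol u}}$ energy.
\item The term $\langle H^k,\theta_{\boldsymbol u}^k-\theta_{\boldsymbol u}^{k-1}\rangle$ is summed by parts using $H^{k+1}-H^k=\Delta t\,F^{k+1}$. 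A discrete Gr\"onwall step then closes the estimate for $\max_k\|\theta_{\boldsymbol u}^k\|_{a_{\boldsymbol u}}$ and $\Delta t\sum_k\|\theta_p^k\|_{a_p}^2$.
\item The pressure in $L^2$ is recovered from the inf-sup condition, $\|\theta_p^k\|_{L^2(\Omega)}\le C(\|H^k\|_{\boldsymbol V_h'}+\|\theta_{\boldsymbol u}^k\|_{a_{\boldsymbol u}})$, not from $\partial_\tau\theta_p$.
\end{enumerate}
Because the consistency term is now paired with $\theta_p^k$ in the $a_p$-norm, the bound $\Delta t\sum_k\|D^k\|_{a_p,*}^2\le C\Delta t^2\|\dot p\|_{L^2(0,T;H^1(\Omega))}^2$ gives $O(\Delta t)$ with only one time derivative. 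Without this mechanism, or an equivalent one, your argument does not establish the lemma.
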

\begin{proof}
    We start our error analysis to split the error $({\boldsymbol{u}}^k-U^k({m}), {p}^k-P^k({m}))$ into two parts as follows: 
\begin{align*}
	{\boldsymbol{u}}^j-U^j({m})&={\boldsymbol{u}}^j-\Pi_h^{\boldsymbol{u}} {\boldsymbol{u}}^j+\Pi_h^{\boldsymbol{u}} {\boldsymbol{u}}^j-U^j({m}) =: e_{I,j}^{\boldsymbol{u}}+\theta^j_{\boldsymbol{u}}; &&\hspace{-1mm} j = 0,1,\cdots,N,
    \\
	{p}^j-P^j(m)&={p}^j-\Pi_h^p {p}^j+\Pi_h^p {p}^j-P^j({m}) =: e_{I,j}^p+\theta^j_p;&&\hspace{-1mm} j = 0,1,\cdots,N.
\end{align*}
Regarding
\begin{align*}
    (\boldsymbol{f}, \boldsymbol{v})_{I_k \times \Omega} &=\int_{I_k} \LRs{a_{\boldsymbol{u}}(\dot{{\boldsymbol{u}}}(s), \boldsymbol{v}) + b(\dot{{p}}(s), \boldsymbol{v})} \,ds 
    \\
    &= a_{\boldsymbol{u}}({\boldsymbol{u}}^{k+1} - {\boldsymbol{u}}^k, \boldsymbol{v}) + b({p}^{k+1} - {p}^k, \boldsymbol{v}),
    \\
    ({m}, q)_{I_k \times \Omega} &=\int_{I_k} \LRs{ b(q, \dot{{\boldsymbol{u}}}) - (s_0 \dot{{p}}(s), q)_{\Omega} - a_p( {p}(s), q) } \,ds 
    \\
    &= b(q, {\boldsymbol{u}}^{k+1} - {\boldsymbol{u}}^k) - (s_0 ({p}^{k+1} - {p}^k), q)_{\Omega} - \int_{I_k} a_p(p, q) \,ds 
\end{align*}
for $\boldsymbol{v} \in \boldsymbol{V}_h$, $q \in Q_h$, and the elliptic projections \eqref{eq:elasticity-elliptic-projection}, \eqref{eq:poisson-elliptic-projection}, we obtain the following error equations:
for all $(\boldsymbol v,q)\in \boldsymbol V_h\times Q_h$,
\begin{align}
\label{eq:theta-error-1}
a_{\boldsymbol u}(\partial_\tau\theta_{\boldsymbol u}^k,\boldsymbol v)
+
b(\partial_\tau\theta_p^k,\boldsymbol v)
&=
-b(\eta_p^k,\boldsymbol v),
\\
\label{eq:theta-error-2}
b(q,\partial_\tau\theta_{\boldsymbol u}^k)
-
(s_0\partial_\tau\theta_p^k,q)_\Omega
-
a_p(\theta_p^k,q)
&=
-b(q,\boldsymbol\eta_{\boldsymbol u}^k)
+
(s_0\eta_p^k,q)_\Omega
+
D^k(q).
\end{align}
%
%
%
where 
\begin{align*}
        \eta_p^k
        &:=
        \partial_\tau p^k-\partial_\tau\Pi_h^p p^k
        =
        (I-\Pi_h^p)\partial_\tau p^k,
        \\
        \boldsymbol\eta_{\boldsymbol u}^k
        &:=
        \partial_\tau\boldsymbol u^k
        -
        \partial_\tau\Pi_h^{\boldsymbol u}\boldsymbol u^k
        =
        (I-\Pi_h^{\boldsymbol u})\partial_\tau\boldsymbol u^k,
        \\
        D^k(q)
        &:=
        \frac1{\Delta t}
        \int_{t_{k-1}}^{t_k}
        a_p(p(s)-p^k,q)\,ds.
\end{align*}

We now derive a stability estimate directly from
\eqref{eq:theta-error-1}--\eqref{eq:theta-error-2}. Introduce
\[
        F^k(\boldsymbol v):=-b(\eta_p^k,\boldsymbol v), \qquad G^k(q)
        :=
        -b(q,\boldsymbol\eta_{\boldsymbol u}^k)
        +
        (s_0\eta_p^k,q)_\Omega
        +
        D^k(q).
\]
Let $A_h:\boldsymbol V_h\to \boldsymbol V_h'$ and
$B_h:Q_h\to \boldsymbol V_h'$ be defined by
\[
        \langle A_h\boldsymbol w,\boldsymbol v\rangle
        :=
        a_{\boldsymbol u}(\boldsymbol w,\boldsymbol v),
        \qquad
        \langle B_h r,\boldsymbol v\rangle
        :=
        b(r,\boldsymbol v).
\]
From \eqref{eq:theta-error-1}, after summing in time, we obtain
\[
        A_h\theta_{\boldsymbol u}^k+B_h\theta_p^k
        = A_h\theta_{\boldsymbol u}^0+B_h\theta_p^0 + \Delta t\sum_{i=1}^k F^i =:
        H^k.
\]
From this we obtain
\[
        \|H^k\|_{\boldsymbol V_h'}
        \le C \LRp{ \|\theta_{\boldsymbol{u}}^0\|_{a_{\boldsymbol{u}}} + \|\theta_p^0\|_{L^2(\Omega)} + 
        \Delta t\sum_{i=1}^k \|F^i\|_{\boldsymbol V_h'}}, 
\]
and therefore,
\begin{equation}
\label{eq:H-bound-direct}
    \max_{1\le k\le N}\|H^k\|_{\boldsymbol V_h'}^2
    +
    \Delta t\sum_{k=1}^N\|H^k\|_{\boldsymbol V_h'}^2
    \le
    C\left(
    \|\theta_{\boldsymbol u}^0\|_{a_{\boldsymbol u}}^2
    +
    \|\theta_p^0\|_{L^2(\Omega)}^2
    +
    \Delta t\sum_{k=1}^N\|F^k\|_{\boldsymbol V_h'}^2
    \right). 
\end{equation}

By the inf-sup condition \eqref{eq:Vh-Qh-inf-sup},
\begin{equation}
\label{eq:theta-p-by-theta-u-H}
        \|\theta_p^k\|_{L^2(\Omega)}
        \le
        C\left(
        \|H^k\|_{\boldsymbol V_h'}
        +
        \|\theta_{\boldsymbol u}^k\|_{a_{\boldsymbol u}}
        \right).
\end{equation}
We next derive an energy estimate. Taking $q=\theta_p^k$ in
\eqref{eq:theta-error-2}, using
$B_h\theta_p^k=H^k-A_h\theta_{\boldsymbol u}^k$, and arguing as in the
standard energy method, we obtain
\[
\begin{aligned}
a_{\boldsymbol u}(\theta_{\boldsymbol u}^k,\partial_\tau\theta_{\boldsymbol u}^k)
+
(s_0\partial_\tau\theta_p^k,\theta_p^k)_\Omega
+
a_p(\theta_p^k,\theta_p^k)
=
\langle H^k,\partial_\tau\theta_{\boldsymbol u}^k\rangle
-
G^k(\theta_p^k).
\end{aligned}
\]
Multiplying by $\Delta t$ and summing from $k=1$ to $m\le N$ gives
\begin{align}
\label{eq:theta-energy-before-est}
&\frac12\|\theta_{\boldsymbol u}^m\|_{a_{\boldsymbol u}}^2
+
\frac{s_0}{2}\|\theta_p^m\|_{L^2(\Omega)}^2
+
\Delta t\sum_{k=1}^m
\|\theta_p^k\|_{a_p}^2
\\
&\qquad
\le \frac12\|\theta_{\boldsymbol u}^0\|_{a_{\boldsymbol u}}^2
+
\frac{s_0}{2}\|\theta_p^0\|_{L^2(\Omega)}^2 + 
\sum_{k=1}^m
\langle H^k,\theta_{\boldsymbol u}^k-\theta_{\boldsymbol u}^{k-1}\rangle
+
\Delta t\sum_{k=1}^m
|G^k(\theta_p^k)|.
\nonumber
\end{align}
The first term on the right-hand side is handled by summation by parts:
\[
\begin{aligned}
\sum_{k=1}^m
\langle H^k,\theta_{\boldsymbol u}^k-\theta_{\boldsymbol u}^{k-1}\rangle
&=
\langle H^m,\theta_{\boldsymbol u}^m\rangle
- 
\langle H^1,\theta_{\boldsymbol u}^0\rangle
-
\sum_{k=1}^{m-1}
\langle H^{k+1}-H^k,\theta_{\boldsymbol u}^k\rangle.
\end{aligned}
\]
Since
$H^{k+1}-H^k=\Delta t F^{k+1}$, Young's inequality yields, for any
$\varepsilon>0$,
\begin{align}
\label{eq:H-term-direct}
\sum_{k=1}^m
\langle H^k,\theta_{\boldsymbol u}^k-\theta_{\boldsymbol u}^{k-1}\rangle
&\le
\varepsilon\|\theta_{\boldsymbol u}^m\|_{a_{\boldsymbol u}}^2
+
C_\varepsilon\|H^m\|_{\boldsymbol V_h'}^2 + C \|H^1\|_{\boldsymbol{V}_h'}^2 + C \| \theta_{\boldsymbol{u}}^0\|_{a_{\boldsymbol{u}}}^2
\\
&\quad
+
\varepsilon\Delta t
\sum_{k=1}^{m-1}
\|\theta_{\boldsymbol u}^k\|_{a_{\boldsymbol u}}^2
+
C_\varepsilon\Delta t
\sum_{k=2}^{m}
\|F^k\|_{\boldsymbol V_h'}^2 .
\nonumber
\end{align}

We estimate $G^k(\theta_p^k)$ in the $a_p$-dual norm. Define
\[
        \|G^k\|_{a_p,*}
        :=
        \sup_{q\in Q_h\setminus\{0\}}
        \frac{|G^k(q)|}{\|q\|_{a_p}}.
\]
Then
\[
        |G^k(\theta_p^k)|
        \le
        \|G^k\|_{a_p,*}\|\theta_p^k\|_{a_p}.
\]
Therefore, by Young's inequality,
\begin{equation}
\label{eq:G-term-direct}
\Delta t\sum_{k=1}^m
|G^k(\theta_p^k)|
\le
\varepsilon
\Delta t\sum_{k=1}^m
\|\theta_p^k\|_{a_p}^2
+
C_\varepsilon
\Delta t\sum_{k=1}^m
\|G^k\|_{a_p,*}^2 .
\end{equation}
Combining \eqref{eq:theta-energy-before-est},
\eqref{eq:H-term-direct}, and \eqref{eq:G-term-direct}, choosing
$\varepsilon>0$ sufficiently small, and using \eqref{eq:H-bound-direct},
we get
\[
\begin{aligned}
\|\theta_{\boldsymbol u}^m\|_{a_{\boldsymbol u}}^2
+
s_0\|\theta_p^m\|_{L^2(\Omega)}^2
+
\Delta t\sum_{k=1}^m
\|\theta_p^k\|_{a_p}^2
&\le
C
\Delta t\sum_{k=1}^N
\left(
\|F^k\|_{\boldsymbol V_h'}^2
+
\|G^k\|_{a_p,*}^2
\right)
\\
&\quad
+
C\Delta t
\sum_{k=1}^{m-1}
\|\theta_{\boldsymbol u}^k\|_{a_{\boldsymbol u}}^2 + C(\|\theta_{\boldsymbol u}^0\|_{a_{\boldsymbol u}}^2
+
\|\theta_p^0\|_{L^2(\Omega)}^2) .
\end{aligned}
\]
By the discrete Gronwall inequality,
\begin{equation}
\label{eq:theta-u-ap-bound}
\max_{1\le m\le N}
\|\theta_{\boldsymbol u}^m\|_{a_{\boldsymbol u}}^2
+
\Delta t\sum_{k=1}^N
\|\theta_p^k\|_{a_p}^2
\le C \LRp{\|\theta_{\boldsymbol u}^0\|_{a_{\boldsymbol u}}^2
+
\|\theta_p^0\|_{L^2(\Omega)}^2} +
C
\Delta t\sum_{k=1}^N
\left(
\|F^k\|_{\boldsymbol V_h'}^2
+
\|G^k\|_{a_p,*}^2
\right).
\end{equation}
Using \eqref{eq:theta-p-by-theta-u-H},
\eqref{eq:H-bound-direct}, and \eqref{eq:theta-u-ap-bound}, we also obtain
\begin{equation}
\label{eq:theta-p-L2-bound}
\Delta t\sum_{k=1}^N
\|\theta_p^k\|_{L^2(\Omega)}^2
\le C \LRp{\|\theta_{\boldsymbol u}^0\|_{a_{\boldsymbol u}}^2
+
\|\theta_p^0\|_{L^2(\Omega)}^2} + 
C
\Delta t\sum_{k=1}^N
\left(
\|F^k\|_{\boldsymbol V_h'}^2
+
\|G^k\|_{a_p,*}^2
\right).
\end{equation}
Consequently,
\begin{align}
\label{eq:theta-final-stability}
&\max_{1\le m\le N}
\|\theta_{\boldsymbol u}^m\|_{a_{\boldsymbol u}} +
\left(
\Delta t\sum_{k=1}^N
\left(
\|\theta_{\boldsymbol u}^k\|_{a_{\boldsymbol u}}^2
+
\|\theta_p^k\|_{L^2(\Omega)}^2
\right)
\right)^{1/2}
\\
\notag
&\le C \LRp{\|\theta_{\boldsymbol u}^0\|_{a_{\boldsymbol u}} + \|\theta_p^0\|_{L^2(\Omega)}} + 
C
\left(
\Delta t\sum_{k=1}^N
\left(
\|F^k\|_{\boldsymbol V_h'}^2
+
\|G^k\|_{a_p,*}^2
\right)
\right)^{1/2}.
\end{align}

It remains to estimate $\|\theta_{\boldsymbol u}^0\|_{a_{\boldsymbol u}}$, $\|\theta_p^0\|_{L^2(\Omega)}$, $F^k$, and $G^k$. By \eqref{eq:initial-data-assumption}, the interpolation approximation \eqref{eq:elliptic-projection-approx}, Poincare inequality, and the triangle inequality, 
\begin{align}
    \|\theta_{\boldsymbol u}^0\|_{a_{\boldsymbol u}} + \|\theta_p^0\|_{L^2(\Omega)} \le Ch^{s} (\|\boldsymbol{u}(0)\|_{H^{1+s}(\Omega)} + \|p(0)\|_{H^{1+s}(\Omega)} ).
\end{align}
By the continuity of $b$ and Korn's inequality,
\[
        \|F^k\|_{\boldsymbol V_h'}
        \le
        C\|\eta_p^k\|_{L^2(\Omega)}.
\]
Moreover, from the definition of $G^k$ and Poincare's inequality,
\[
\begin{aligned}
\|G^k\|_{a_p,*}
&\le
C\|\boldsymbol\eta_{\boldsymbol u}^k\|_{H^1(\Omega)}
+
C s_0\|\eta_p^k\|_{L^2(\Omega)}
+
\|D^k\|_{a_p,*}.
\end{aligned}
\]
We now estimate the three residuals. Since
\[
        \partial_\tau \boldsymbol u^k
        =
        \frac1{\Delta t}
        \int_{t_{k-1}}^{t_k}
        \dot{\boldsymbol u}(s)\,ds,
\]
the approximation property of the elliptic projection gives
\[
\begin{aligned}
\Delta t\sum_{k=1}^N
\|\boldsymbol\eta_{\boldsymbol u}^k\|_{H^1(\Omega)}^2
&=
\Delta t\sum_{k=1}^N
\left\|
(I-\Pi_h^{\boldsymbol u})\partial_\tau\boldsymbol u^k
\right\|_{H^1(\Omega)}^2
\\
&\le
C h^{2s}
\Delta t\sum_{k=1}^N
\left(
\frac1{\Delta t}
\int_{t_{k-1}}^{t_k}
\|\dot{\boldsymbol u}(s)\|_{H^{1+s}(\Omega)}\,ds
\right)^2
\\
&\le
C h^{2s}
\|\dot{\boldsymbol u}\|_{L^2(0,T;H^{1+s}(\Omega))}^2 .
\end{aligned}
\]
Similarly,
\[
\begin{aligned}
\Delta t\sum_{k=1}^N
\|\eta_p^k\|_{L^2(\Omega)}^2
&=
\Delta t\sum_{k=1}^N
\left\|
(I-\Pi_h^p)\partial_\tau p^k
\right\|_{L^2(\Omega)}^2
\\
&\le
C h^{2(1+s)}
\|\dot p\|_{L^2(0,T;H^{1+s}(\Omega))}^2 .
\end{aligned}
\]
For the diffusion time residual, we use the $a_p$-dual norm. From the definition of $D^k(q)$ we get
\[
\begin{aligned}
\|D^k\|_{a_p,*}
&\le
\frac1{\Delta t}
\int_{t_{k-1}}^{t_k}
\|p(s)-p^k\|_{a_p}\,ds
\\
&\le
\frac1{\Delta t}
\int_{t_{k-1}}^{t_k}
\int_s^{t_k}
\|\dot p(r)\|_{a_p}\,dr\,ds
\\
&\le
C\Delta t^{1/2}
\|\dot p\|_{L^2(t_{k-1},t_k;H^1(\Omega))}.
\end{aligned}
\]
Therefore,
\[
\begin{aligned}
\Delta t\sum_{k=1}^N
\|D^k\|_{a_p,*}^2
&\le
C\Delta t^2
\|\dot p\|_{L^2(0,T;H^1(\Omega))}^2 .
\end{aligned}
\]

Combining the above bounds with \eqref{eq:theta-final-stability}, we obtain
\begin{align}
\label{eq:theta-est-final}
&\max_{1\le m\le N}
\|\theta_{\boldsymbol u}^m\|_{a_{\boldsymbol u}} +\left(
\Delta t\sum_{k=1}^N
\left(
\|\theta_{\boldsymbol u}^k\|_{a_{\boldsymbol u}}^2
+
\|\theta_p^k\|_{L^2(\Omega)}^2
\right)
\right)^{1/2} 
\\
\notag
&\le  Ch^{s} (\|\boldsymbol{u}(0)\|_{H^{1+s}(\Omega)} + \|p(0)\|_{H^{1+{s}}(\Omega)} )
\\
\nonumber
&\quad  +C\Big[
h^s
\|\dot{\boldsymbol u}\|_{L^2(0,T;H^{1+s}(\Omega))}
+
h^{1+s}
\|\dot p\|_{L^2(0,T;H^{1+s}(\Omega))}
+
\Delta t
\|\dot p\|_{L^2(0,T;H^1(\Omega))}
\Big].
\end{align}

To complete the proof, we estimate the difference of continuous and discrete solutions. Since 
\begin{align*}
    \|\boldsymbol{u}^k - U^k(m)\|_{a_{\boldsymbol{u}}} \le \|\boldsymbol{u}^k - \Pi_h^{\boldsymbol{u}} \boldsymbol{u}^k\|_{a_{\boldsymbol{u}}} + \|\theta_{\boldsymbol{u}}^k\|_{a_{\boldsymbol{u}}} 
    \le Ch^s \| u^k \|_{H^{1+s}(\Omega)} + \|\theta_{\boldsymbol{u}}^k\|_{a_{\boldsymbol{u}}},
\end{align*}
the $\max_{1\le k\le N}\| \boldsymbol{u}^k - U^k(m)\|_{a_{\boldsymbol{u}}}$ part in \eqref{eq:forward-aux-estimate} is proved. The $\max_{1\le k\le N}\| p^k - P^k(m)\|_{L^2(\Omega)}$ part in \eqref{eq:forward-aux-estimate} follows by \eqref{eq:theta-p-by-theta-u-H},  \eqref{eq:elliptic-projection-approx}, and 
\begin{align*}
        \|p^k - P^k(m)\|_{L^2(\Omega)} \le \| p^k - \Pi_h^{p} p^k\|_{L^2(\Omega)} + \|\theta_{p}^k\|_{L^2(\Omega)} 
    \le Ch^s \| p^k \|_{H^{1+s}(\Omega)} + \|\theta_{p}^k\|_{L^2(\Omega)}.
\end{align*}
For the space-time $L^2$ errors in \eqref{eq:forward-aux-estimate} define piecewise constant-in-time interpolations
\[
        \boldsymbol u_\tau(t)=\boldsymbol u^k,
        \qquad
        p_\tau(t)=p^k,
        \qquad t\in I_k.
\]
Then
\[
\begin{aligned}
\|\boldsymbol u-U(m)\|_{L^2(0,T;H^1(\Omega))}
&\le
\|\boldsymbol u-\boldsymbol u_\tau\|_{L^2(0,T;H^1(\Omega))}
\\
&\quad
+
\|\boldsymbol u_\tau-\Pi_h^{\boldsymbol u}\boldsymbol u_\tau\|_{L^2(0,T;H^1(\Omega))}
+
\|\theta_{\boldsymbol u}\|_{\ell_\tau^2(H^1(\Omega))}.
\end{aligned}
\]
Here we have 
\begin{align*}
        \|\boldsymbol u-\boldsymbol u_\tau\|_{L^2(0,T;H^1(\Omega))}
        &\le
        C\Delta t
        \|\dot{\boldsymbol u}\|_{L^2(0,T;H^1(\Omega))},
        \\
        \|\boldsymbol u_\tau-\Pi_h^{\boldsymbol u}\boldsymbol u_\tau\|_{L^2(0,T;H^1(\Omega))}
        &\le
        C h^{1+s}
        \|\boldsymbol u\|_{\ell_\tau^2(H^{1+s}(\Omega))}.
\end{align*}
Since $\|\theta_{\boldsymbol{u}}\|_{a_{\boldsymbol{u}}} \sim \|\theta_{\boldsymbol{u}}\|_{H^1(\Omega)}$, using \eqref{eq:theta-est-final},
\[
\begin{aligned}
&\|\boldsymbol u-U(m)\|_{L^2(0,T;H^1(\Omega))}
\le  Ch^{s} (\|\boldsymbol{u}(0)\|_{H^{1+s}(\Omega)} + \|p(0)\|_{H^{1+{s}}(\Omega)} )
\\
&+C\Big[
\Delta t
\|\dot{\boldsymbol u}\|_{L^2(0,T;H^1(\Omega))}
+
h^{1+s}
\|\boldsymbol u\|_{\ell_\tau^2(H^{1+s}(\Omega))}
\\
&\qquad +
h^s
\|\dot{\boldsymbol u}\|_{L^2(0,T;H^{1+s}(\Omega))}
+
h^{1+s}
\|\dot p\|_{L^2(0,T;H^{1+s}(\Omega))}
+
\Delta t
\|\dot p\|_{L^2(0,T;H^1(\Omega))}
\Big].
\end{aligned}
\]
Similarly,
\[
\begin{aligned}
\|p-P(m)\|_{L^2(0,T;L^2(\Omega))}
&\le
\|p-p_\tau\|_{L^2(0,T;L^2(\Omega))}
\\
&\quad
+
\|p_\tau-\Pi_h^p p_\tau\|_{L^2(0,T;L^2(\Omega))}
+
\|\theta_p\|_{\ell_\tau^2(L^2(\Omega))}.
\end{aligned}
\]
The first two terms satisfy
\[
        \|p-p_\tau\|_{L^2(0,T;L^2(\Omega))}
        \le
        C\Delta t
        \|\dot p\|_{L^2(0,T;L^2(\Omega))},
\]
and
\[
        \|p_\tau-\Pi_h^p p_\tau\|_{L^2(0,T;L^2(\Omega))}
        \le
        C h^{1+s}
        \|p\|_{\ell_\tau^2(H^{1+s}(\Omega))}.
\]
Using \eqref{eq:theta-est-final}, we obtain
\[
\begin{aligned}
&\|p-P(m)\|_{L^2(0,T;L^2(\Omega))}
\le  Ch^{s} (\|\boldsymbol{u}(0)\|_{H^{1+s}(\Omega)} + \|p(0)\|_{H^{1+{s}}(\Omega)} )
\\
&+C\Big[
\Delta t
\|\dot p\|_{L^2(0,T;L^2(\Omega))}
+
h^{1+s}
\|p\|_{\ell_\tau^2(H^{1+s}(\Omega))}
\\
&\qquad +
h^s
\|\dot{\boldsymbol u}\|_{L^2(0,T;H^{1+s}(\Omega))}
+
h^{1+s}
\|\dot p\|_{L^2(0,T;H^{1+s}(\Omega))}
+
\Delta t
\|\dot p\|_{L^2(0,T;H^1(\Omega))}
\Big].
\end{aligned}
\]

Combining the two estimates gives
\[
\begin{aligned}
&\|\boldsymbol u-U(m)\|_{L^2(0,T;H^1(\Omega))}
+
\|p-P(m)\|_{L^2(0,T;L^2(\Omega))}
\\
&\qquad
\le Ch^{s} (\|\boldsymbol{u}(0)\|_{H^{1+s}(\Omega)} + \|p(0)\|_{H^{1+{s}}(\Omega)} )
\\
&
+ C\Big[
\Delta t
\left(
\|\dot{\boldsymbol u}\|_{L^2(0,T;H^1(\Omega))}
+
\|\dot p\|_{L^2(0,T;L^2(\Omega))}
+
\|\dot p\|_{L^2(0,T;H^1(\Omega))}
\right)
\\
&\qquad
+
h^s
\|\dot{\boldsymbol u}\|_{L^2(0,T;H^{1+s}(\Omega))}
+
h^{s}
\left(
\|\boldsymbol u\|_{\ell_\tau^2(H^{1+s}(\Omega))}
+
\|p\|_{\ell_\tau^2(H^{1+s}(\Omega))}
+
\|\dot p\|_{L^2(0,T;H^{1+s}(\Omega))}
\right)
\Big].
\end{aligned}
\]
This completes the proof.
\end{proof}


\begin{lemma}[adjoint error for a fixed control]
\label{lem:adjoint-error-fixed-control}
Let $m\in \mathcal{M}_{ad}$ be fixed. Let $(\boldsymbol{u},p)$ be the
corresponding continuous state and let $(\boldsymbol{w},r)$ be the
corresponding continuous adjoint. Let $(U(m),P(m))$ be the fully
discrete state generated by the same control $m$, and let
$(W(m),R(m))$ be the corresponding fully discrete adjoint.

Assume that
\[
 r\in H^1(0,T;H^{1+s}(\Omega)),
 \quad 
 \boldsymbol{w}\in H^1(0,T;\boldsymbol{H}^{1+s}(\Omega)),
\]
for some $0<s\le 1$. Then
\begin{multline}
    \|R(m)-r\|_{L^2(0,T;L^2(\Omega))} 
    \\
    \le C\left( h^s+\Delta t + \|U(m)-\boldsymbol{u}\|_{L^2(0,T;L^2(\Omega))} +\|P(m)-p\|_{L^2(0,T;L^2(\Omega))} \right).
\end{multline}
Consequently, if the fixed-control state error satisfies
\[
\|U(m)-\boldsymbol{u}\|_{L^2(0,T;L^2(\Omega))}
+ \|P(m)-p\|_{L^2(0,T;L^2(\Omega))} \le C(h^s+\Delta t),
\]
then
\[
\|R(m)-r\|_{L^2(0,T;L^2(\Omega))} \le C(h^s+\Delta t).
\]
\end{lemma}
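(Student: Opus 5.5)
The plan is to split the adjoint error through an intermediate discrete adjoint. Let $(\widetilde W,\widetilde R)\in \boldsymbol V_{hk}\times Q_{hk}$ solve the fully discrete adjoint system \eqref{eq:fullydiscreteadj-eqs11}, but with the continuous state $(\boldsymbol u,p)$ in the data in place of $(U(m),P(m))$. With terminal data $\widetilde W^N=0$, $\widetilde R^N=0$, we write
\[
R(m)-r=\bigl(R(m)-\widetilde R\bigr)+\bigl(\widetilde R-r\bigr).
\]
The first difference accounts for the perturbation of the adjoint data. The second is a pure discretization error of a backward-in-time problem with fixed, regular data.

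\textbf{Step 1 (discretization error $\widetilde R-r$).} The adjoint problem is the forward problem \eqref{eq:new-weak-n-eqs} reversed in time, via $t\mapsto T-t$, with sign changes. The discrete adjoint \eqref{eq:fullydiscreteadj-eqs11} is the corresponding reversed backward Euler scheme; the diffusion term is evaluated at $R^k$, which is the implicit end in reversed time. I would therefore repeat the proof of Lemma~\ref{lemma:aux-forward-error} verbatim on the reversed problem. This means splitting $\widetilde R^k-r^k=(\Pi_h^p r^k-r^k)+\theta_r^k$, and likewise for $\boldsymbol w$, writing error equations of the form \eqref{eq:theta-error-1}--\eqref{eq:theta-error-2}, and using the $H^k$ device with \eqref{eq:theta-p-by-theta-u-H}. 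The summation by parts, the energy argument and discrete Gronwall go through unchanged.

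One extra consistency term appears: the right-hand side $\frac1{\Delta t}\alpha_{\boldsymbol u,C}(\boldsymbol u-\boldsymbol u_C,\boldsymbol v)_{I_k\times\Omega}$ is a time average, whereas the continuous equation integrated over $I_k$ gives exactly the same average. So the data produce no extra error, and only the quadrature residual $D^k$ for $a_p$ remains, as before. The terminal data are exact, so there is no initial-data term. Under the assumed regularity of $(\boldsymbol w,r)$ this yields $\|\widetilde R-r\|_{L^2(0,T;L^2(\Omega))}\le C(h^s+\Delta t)$. The piecewise-constant interpolation step at the end needs care to use the correct endpoint $r^k$ on $I_k$, consistent with \eqref{eq:varineq}; it costs $\Delta t\|\dot r\|$.

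\textbf{Step 2 (data perturbation $R(m)-\widetilde R$).} The difference $(E_w,E_r)=(W(m)-\widetilde W,R(m)-\widetilde R)$ solves the homogeneous discrete adjoint scheme with zero terminal data. Its right-hand sides are
\[
\alpha_{\boldsymbol u,C}(U(m)-\boldsymbol u,\boldsymbol v)_{I_k\times\Omega}\quad\text{and}\quad \alpha_{p,C}(P(m)-p,q)_{I_k\times\Omega}.
\]
I would prove a discrete stability estimate for this scheme, namely the discrete analogue of \eqref{eq:energy-estimate2} and \eqref{eq:energy-estimate1} run backward. It bounds $\Delta t\sum_k\|E_r^k\|^2_{L^2(\Omega)}$ by $C$ times the $L^2(0,T;L^2)$ norms of the two data terms, which gives the claim. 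This is exactly \eqref{eq:theta-final-stability}, read with $F^k$ replaced by $\alpha_{\boldsymbol u,C}$ times the time average of $U(m)-\boldsymbol u$ over $I_k$, and $G^k$ replaced by the corresponding pressure term. The bounds needed are $\|F^k\|_{\boldsymbol V_h'}\le C\|\cdot\|_{L^2}$ and, by Poincaré, $\|G^k\|_{a_p,*}\le C\|\cdot\|_{L^2}$. Jensen's inequality then turns the sums of time averages into space-time $L^2$ norms.

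\textbf{Main obstacle.} The delicate point is that the data in the first adjoint equation enter $F^k$ directly, not as time increments. The telescoping $H^k=H^{k+1}-\Delta tF^{k+1}$, taken backward from $H^N=0$, stays bounded in $\ell^\infty$ only by $\sum\Delta t\|F\|$, and the summation-by-parts term $\sum\langle H^{k+1}-H^k,E_w^k\rangle$ needs $\Delta t\sum\|F^k\|^2$. Both are controlled by $\|U(m)-\boldsymbol u\|_{L^2(0,T;L^2)}$ via Cauchy--Schwarz, so I expect this to work. Nevertheless, it is where I would check most carefully that $s_0\to0$ does not destroy the $L^2$ control of $E_r$. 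That control comes from the inf-sup condition \eqref{eq:Vh-Qh-inf-sup} through \eqref{eq:theta-p-by-theta-u-H}, not from the $s_0$ term, which is what makes the bound robust. Finally, the triangle inequality combines Steps 1 and 2, and the corollary follows by inserting the assumed state bound.
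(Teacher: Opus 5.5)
Your proposal is correct and follows essentially the same route as the paper. Like you, the paper introduces the auxiliary discrete adjoint $(\widetilde W,\widetilde R)$ driven by the exact state $(\boldsymbol u,p)$, bounds $R(m)-\widetilde R$ by the time-reversed version of the stability argument \eqref{eq:theta-error-1}--\eqref{eq:theta-u-ap-bound} with zero terminal data, and bounds $\widetilde R-r$ by the dG(0) analysis of Lemma~\ref{lemma:aux-forward-error}. Your extra checks fill in details the paper only asserts: the exact consistency of the time-averaged data, the convention that $R^k$ is used on $I_k$, and the $s_0$-robust pressure control through \eqref{eq:theta-p-by-theta-u-H}.
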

\begin{proof}
We introduce an auxiliary fully discrete adjoint
$(\widetilde W,\widetilde R)$ driven by the exact continuous state data. More precisely, the right-hand sides in the discrete adjoint equations are obtained from $\alpha_{\boldsymbol{u},C}(\boldsymbol{u}-\boldsymbol{u}_C)$ and $\alpha_{p,C}(p-p_C)$, instead of
$\alpha_{\boldsymbol{u},C}(U(m)-\boldsymbol{u}_C)$, $\alpha_{p,C}(P(m)-p_C)$. 
Then
\[
R(m)-r
=
\bigl(R(m)-\widetilde R\bigr)
+
\bigl(\widetilde R-r\bigr).
\]

We first estimate $R(m)-\widetilde R$. Subtracting the two discrete
adjoint systems gives a discrete adjoint problem with zero terminal data
and right-hand sides
\[
\alpha_{\boldsymbol{u},C}(U(m)-\boldsymbol{u}),
\qquad
\alpha_{p,C}(P(m)-p).
\]
By the stability estimate for the fully discrete adjoint problem, which is
the backward-in-time analogue of the state stability estimate, we obtain a system
\begin{align}
\label{eq:WR-error-1}
a_{\boldsymbol u}(\partial_\tau\theta_{W}^k,\boldsymbol v)
+
b(\partial_\tau\theta_R^k,\boldsymbol v)
&=
\frac{1}{\Delta t} \LRp{\alpha_{\boldsymbol{u},C}(U(m)-\boldsymbol{u}), \boldsymbol v}_{I_k \times \Omega},
\\
\label{eq:WR-error-2}
b(q,\partial_\tau\theta_{W}^k)
-
(s_0\partial_\tau\theta_R^k,q)_\Omega
-
a_p(\theta_R^k,q)
&= \frac{1}{\Delta t}
\LRp{\alpha_{p,C}(P(m)-p), q}_{I_k \times \Omega}
\end{align}
where 
\begin{align*}
    \theta_W^k := W(m)^k - \widetilde{W}^k, \quad \theta_R^k := R(m)^k - \widetilde{R}^k , \qquad 1 \le k \le N . 
\end{align*}
After reversing the time indices, this system has the same algebraic structure as the forward perturbation system. Therefore, the stability
argument used in Lemma~\ref{lemma:aux-forward-error}, particularly the procedures from \eqref{eq:theta-error-1}, \eqref{eq:theta-error-2} to \eqref{eq:theta-u-ap-bound}, yields  
\[
\|R(m)-\widetilde R\|_{L^2(0,T;L^2(\Omega))}
\le
C\left(
\|U(m)-\boldsymbol{u}\|_{L^2(0,T;L^2(\Omega))}
+
\|P(m)-p\|_{L^2(0,T;L^2(\Omega))}
\right)
\]
because $R(m)-\widetilde{R}$ is piecewise constant in time. 

It remains to estimate $\widetilde R-r$. Since
$(\widetilde W,\widetilde R)$ is the fully discrete approximation of
the continuous adjoint problem with the exact state data in the source
terms, the dG(0) error estimate as in Lemma~\ref{lemma:aux-forward-error} gives
\[
\|\widetilde R-r\|_{L^2(0,T;L^2(\Omega))}
\le
C(h^s+\Delta t),
\]
where the constant depends on the stated regularity of
$(\boldsymbol{w},r)$ and on the target data, but is independent of
$h$ and $\Delta t$.

Combining the two estimates and using the triangle inequality yields
\begin{multline}
\|R(m)-r\|_{L^2(0,T;L^2(\Omega))} 
\\
\le
C\left(
h^s+\Delta t
+
\|U(m)-\boldsymbol{u}\|_{L^2(0,T;L^2(\Omega))}
+
\|P(m)-p\|_{L^2(0,T;L^2(\Omega))}
\right).
\end{multline}
The final estimate follows from the fixed-control state error estimate.
\end{proof}
\subsection{Estimates for variational discretization of control $m$}
This subsection is devoted to deriving error estimates for the control $m$ within the fully discrete \textit{variational discretization} setting. Before proving the primary a priori bound, we first introduce a series of preliminary results that lay the foundation for the analysis.
\begin{lemma}\label{Lemma:5.7}
    Define the reduced discrete functional
\[
\begin{aligned}
        j_h(m)
        :=
        &\frac{\alpha_{\boldsymbol{u},C}}{2}
        \|U(m)-\boldsymbol{u}_C\|_{L^2(0,T; L^2(\Omega))}^{2}
        +
        \frac{\alpha_{p,C}}{2}
        \|P(m)-p_C\|_{L^2(0,T; L^2(\Omega))}^{2}  
        \\
        &+
        \frac{\gamma}{2}
        \|m\|_{L^2(0,T; L^2(\Omega))}^{2} .
\end{aligned}
\]
    For $m_1, m_2 \in \mc{M}_{ad}$ the following estimate holds:
	\begin{align}
		\left(j_{h}'(m_1) - j_{h}'(m_2),m_1-m_2\right)_{(0,T)\times \Omega} \ge \gamma  \|m_1-m_2\|_{L^2(0,T; L^2(\Omega))}^2.
	\end{align}
\end{lemma}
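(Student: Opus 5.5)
The plan is to exploit that $j_h$ is a quadratic functional whose nonlinear (affine) dependence on $m$ enters only through the discrete control-to-state map. First I will record that the map $m\mapsto (U(m),P(m))$ defined by \eqref{eq:fullydiscrete-eqs} with the fixed initial data $(U^0,P^0)$ is affine. By the lemma on unique solvability of \eqref{eq:fullydiscrete-eqs}, it is well defined. Writing $(U(m),P(m)) = (U(0),P(0)) + (\mathcal{S}_h^{\boldsymbol u} m, \mathcal{S}_h^p m)$, the operator $\mathcal{S}_h=(\mathcal{S}_h^{\boldsymbol u},\mathcal{S}_h^p)$ is the linear solution map of \eqref{eq:fullydiscrete-eqs} with zero initial data and $\boldsymbol f=0$. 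It is bounded from $L^2(0,T;L^2(\Omega))$ into $\boldsymbol V_{hk}\times Q_{hk}$, as follows from the stability argument of Lemma~\ref{lemma:aux-forward-error}, or simply from finite dimensionality at each step. Hence $j_h$ is a continuous quadratic functional on $L^2(0,T;L^2(\Omega))$ and is Fr\'echet differentiable.

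Next I will compute the derivative:
\begin{align*}
(j_h'(m),z)_{(0,T)\times\Omega} &= \alpha_{\boldsymbol u,C}\,(U(m)-\boldsymbol u_C,\mathcal{S}_h^{\boldsymbol u} z)_{(0,T)\times\Omega} + \alpha_{p,C}\,(P(m)-p_C,\mathcal{S}_h^{p} z)_{(0,T)\times\Omega} \\
&\quad + \gamma\,(m,z)_{(0,T)\times\Omega}.
\end{align*}
Taking the difference at $m_1$ and $m_2$ and testing with $z=m_1-m_2$, the constant parts $U(0)-\boldsymbol u_C$ and $P(0)-p_C$ cancel. Using $U(m_1)-U(m_2)=\mathcal{S}_h^{\boldsymbol u}(m_1-m_2)$ and $P(m_1)-P(m_2)=\mathcal{S}_h^{p}(m_1-m_2)$, this gives
\[
(j_h'(m_1)-j_h'(m_2),m_1-m_2) = \alpha_{\boldsymbol u,C}\|\mathcal{S}_h^{\boldsymbol u}(m_1-m_2)\|^2_{L^2(0,T;L^2(\Omega))} + \alpha_{p,C}\|\mathcal{S}_h^{p}(m_1-m_2)\|^2_{L^2(0,T;L^2(\Omega))} + \gamma\|m_1-m_2\|^2_{L^2(0,T;L^2(\Omega))}.
\]
The first two terms are nonnegative because $\alpha_{\boldsymbol u,C},\alpha_{p,C}\ge 0$, so the claimed inequality follows at once.

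The only point requiring care is the identification of $j_h'$ and the cancellation of the affine offsets. One could instead express $j_h'(m)=\gamma m+R(m)$ via the discrete adjoint \eqref{eq:fullydiscreteadj-eqs11}, with $R^k$ on $I_k$. In that route, checking the sign requires the discrete duality identity between \eqref{eq:fullydiscrete-eqs} and \eqref{eq:fullydiscreteadj-eqs11}, namely the summation by parts that produces the shifted index $R^k$. This is exactly what the discrete optimality theorem encodes. I would avoid needing it by working directly with the quadratic representation above, which is valid regardless of how $j_h'$ is later represented through the adjoint. Consistency with the adjoint form then follows from that theorem.
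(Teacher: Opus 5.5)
Your proposal is correct and follows essentially the same route as the paper. Both arguments use that the discrete control-to-state map $m\mapsto (U(m),P(m))$ is affine, so that $\bigl(j_h'(m_1)-j_h'(m_2),m_1-m_2\bigr)_{(0,T)\times\Omega}=\alpha_{\boldsymbol{u},C}\|\delta U\|^2+\alpha_{p,C}\|\delta P\|^2+\gamma\|\delta m\|^2$, and then drop the nonnegative tracking terms; you make explicit the derivative formula and the cancellation of the affine offsets, which the paper leaves implicit.
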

\begin{proof}
Let $S_h:\mathcal{M}_{ad}\to \boldsymbol{V}_{hk}\times Q_{hk}$ be the discrete control-to-state map defined by
\[
        S_h(m)=(U(m),P(m)) \in \boldsymbol{V}_{hk} \times Q_{hk}
\]
where $(U(m), P(m))$ is the solution of \eqref{eq:fullydiscrete-eqs}. Since the fully discrete state equations are linear with respect to the control variable, the map $S_h$ is affine. %
Let $m_1,m_2\in \mathcal{M}_{ad}$ be arbitrary and set
\[
        \delta m := m_1-m_2,\qquad
        \delta U := U(m_1)-U(m_2),\qquad
        \delta P := P(m_1)-P(m_2).
\]
Since the discrete control-to-state map $S_h$ is affine, its derivative is
linear and independent of the control. Hence the second variation of
$j_h$ in the direction $\delta m$ is given by
\begin{align*}
&\bigl(j_h'(m_1)-j_h'(m_2),\delta m\bigr)_{(0,T) \times \Omega}
\\
&=
\alpha_{\boldsymbol{u},C}\|\delta U\|_{L^2(0,T;L^2(\Omega))}^{2}
+
\alpha_{p,C}\|\delta P\|_{L^2(0,T;L^2(\Omega))}^{2}
+
\gamma\|\delta m\|_{L^2(0,T;L^2(\Omega))}^{2}.
\end{align*}
Since $\alpha_{\boldsymbol{u},C}\ge 0$, $\alpha_{p,C}\ge 0$, and
$\gamma>0$, we obtain
\[
\bigl(j_h'(m_1)-j_h'(m_2),m_1-m_2\bigr)_{(0,T)\times \Omega}
\ge
\gamma
\|m_1-m_2\|_{L^2(0,T;L^2(\Omega))}^{2}.
\]
\end{proof}
\begin{theorem}\label{Theorem:5.8}
	Let $(\boldsymbol{u},p,m)$ and $(U,P,M)$ be the admissible solutions of the continuous and fully discrete optimal control problems. Suppose that $(\boldsymbol{u}, p)$ and the continuous adjoint solution satisfy
	\begin{align}\label{eq:adjoint-regularity}
		p, r \in H^1(0,T; H^{1+s}(\Omega)) ,
		\qquad
		\boldsymbol{u}, \boldsymbol{w} \in H^1(0,T; \boldsymbol{H}^{1+s}(\Omega)),
	\end{align}
    and the initial data assumption \eqref{eq:initial-data-assumption} holds. 
	Then, we have the estimate 
    \begin{align*}
		\|m-M\|_{L^2(0,T;L^2(\Omega))} &\le C (\Delta t + h^s)
	\end{align*}
    with $C>0$ depending on the solution regularities in \eqref{eq:adjoint-regularity}.
\end{theorem}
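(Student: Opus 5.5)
The plan is the standard variational-discretization argument of Hinze: combine the discrete strong monotonicity of Lemma~\ref{Lemma:5.7} with the two first-order optimality conditions, and reduce the control error to an adjoint error for the fixed continuous control $m$. The discrete reduced gradient is $j_h'(z) = \gamma z + R(z)$, where $R(z)$ is the piecewise constant-in-time function with $R(z)|_{I_k} = R^k(z)$. The continuous reduced gradient is $j'(z)=\gamma z + r(z)$.

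First, apply Lemma~\ref{Lemma:5.7} with $m_1=m$ and $m_2=M$:
\begin{align*}
\gamma\|m-M\|_{L^2(0,T;L^2(\Omega))}^2 \le \bigl(j_h'(m)-j_h'(M), m-M\bigr)_{(0,T)\times\Omega}.
\end{align*}

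Next, use the discrete variational inequality \eqref{eq:varineq} with $z=m\in\mathcal{M}_{ad}$. This gives $(j_h'(M), m-M)\ge 0$, so $-(j_h'(M),m-M)\le 0$.

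Then, use the continuous inequality \eqref{ctsvi} with the test control $M\in\mathcal{M}_{ad}$. Admissibility of $M$ holds by \eqref{proj}. This gives $(\gamma m + r, M-m)\ge0$, i.e. $(j'(m), m-M)\le 0$.

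Adding these relations yields
\begin{align*}
\gamma\|m-M\|^2 \le \bigl(j_h'(m)-j'(m), m-M\bigr) = \bigl(R(m)-r, m-M\bigr)\le \|R(m)-r\|\,\|m-M\|,
\end{align*}
with all norms in $L^2(0,T;L^2(\Omega))$. Consequently $\|m-M\|\le \gamma^{-1}\|R(m)-r\|$. Here $(\boldsymbol{w},r)$ is the continuous adjoint belonging to the optimal $m$, and $R(m)$ is the discrete adjoint driven by the discrete state $(U(m),P(m))$ generated by the continuous control $m$. The identification $j_h'(m)=\gamma m+R(m)$ needs a short check. By the discrete adjoint equations \eqref{eq:fullydiscreteadj-eqs11}, the gradient of $J_h$ pairs $m$ on $I_k$ with $R^k$, which is exactly the convention used in \eqref{eq:varineq}. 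The same argument as in the proof of the discrete optimality system applies to any fixed control, not only the optimal one.

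It remains to bound $\|R(m)-r\|$. I will invoke Lemma~\ref{lem:adjoint-error-fixed-control} with this fixed $m$, using the regularity \eqref{eq:adjoint-regularity} for $(\boldsymbol{w},r)$. This reduces the task to the state error $\|U(m)-\boldsymbol{u}\|+\|P(m)-p\|$ in $L^2(0,T;L^2(\Omega))$. That term is controlled by Lemma~\ref{lemma:aux-forward-error}, using $\|\cdot\|_{L^2}\le\|\cdot\|_{H^1}$ for the displacement, together with the initial data assumption \eqref{eq:initial-data-assumption} and the state regularity in \eqref{eq:adjoint-regularity}. The combination gives $C(h^s+\Delta t)$.

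The main obstacle I expect is consistency of time indexing. The discrete control is tied to $R^k$ on $I_k=(t_k,t_{k+1}]$, i.e. to the left endpoint, and this must be matched with the piecewise-constant representation of $R(m)$ used in Lemma~\ref{lem:adjoint-error-fixed-control}. Any index shift produces an extra term such as $\|r-r_\tau\|$, which is bounded by $C\Delta t\|\dot r\|_{L^2(0,T;L^2(\Omega))}$ and is therefore harmless under \eqref{eq:adjoint-regularity}. Apart from this, the argument consists of two inequalities and two earlier lemmas, and all constants depend only on $\gamma$ and the regularity norms.
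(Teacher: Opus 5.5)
Your proposal is correct and follows the paper's proof essentially step for step. The paper also applies the strong monotonicity of Lemma~\ref{Lemma:5.7} with $m_1=m$, $m_2=M$, uses the discrete variational inequality tested with $m$ and the continuous one tested with $M$, and applies Cauchy--Schwarz to get $\|m-M\|_{L^2(0,T;L^2(\Omega))}\le\gamma^{-1}\|R(m)-r\|_{L^2(0,T;L^2(\Omega))}$ before closing with the fixed-control adjoint estimate. Your explicit chaining of Lemma~\ref{lem:adjoint-error-fixed-control} with Lemma~\ref{lemma:aux-forward-error}, and your check that the discrete gradient pairs $m|_{I_k}$ with $R^k$, only spell out what the paper leaves implicit.
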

\begin{proof}
Let $m$ and $M$ denote the continuous and discrete optimal controls,
respectively. We use the convention that the adjoint pressure variable
already contains the tracking weights, so that
\[
        j'(m)=\gamma m+r,
        \qquad
        j_h'(M)=\gamma M+R(M).
\]
Here $R(z)$ denotes the discrete adjoint pressure corresponding to the discrete state generated by $z \in \mathcal{M}_{ad}$.

By the strong monotonicity of the reduced discrete gradient, we have
\[
\gamma \|m-M\|_{L^2(0,T;L^2(\Omega))}^{2}
\le
\bigl(j_h'(m)-j_h'(M),m-M\bigr)_{(0,T) \times \Omega}.
\]
Using the expressions for the continuous and discrete reduced gradients,
we write
\[
\begin{aligned}
\bigl(j_h'(m)-j_h'(M),m-M\bigr)_{(0,T) \times \Omega}
& =
\bigl(\gamma m+R(m),m-M\bigr)_{(0,T) \times \Omega}
\\
&\qquad
+
\bigl(\gamma M+R(M),M-m\bigr)_{(0,T) \times \Omega}.
\end{aligned}
\]
The discrete variational inequality gives $\bigl(\gamma M+R(M),M-m\bigr)_{(0,T) \times \Omega}\le 0$. Moreover, the continuous variational inequality with the admissible test control $M$ gives
\[
\bigl(\gamma m+r,m-M\bigr)_{(0,T) \times \Omega}\le 0.
\]
Therefore,
\[
\begin{aligned}
\gamma \|m-M\|_{L^2(0,T;L^2(\Omega))}^{2}
&\le
\bigl(\gamma m+R(m),m-M\bigr)_{(0,T) \times \Omega}
\\
&=
\bigl(\gamma m+r,m-M\bigr)_{(0,T) \times \Omega}
+
\bigl(R(m)-r,m-M\bigr)_{(0,T) \times \Omega}
\\
&\le
\bigl(R(m)-r,m-M\bigr)_{(0,T) \times \Omega}.
\end{aligned}
\]
By the Cauchy--Schwarz inequality,
\[
\gamma \|m-M\|_{L^2(0,T;L^2(\Omega))}^{2}
\le
\|R(m)-r\|_{L^2(0,T;L^2(\Omega))}
\|m-M\|_{L^2(0,T;L^2(\Omega))}.
\]
If $m\neq M$, dividing by
$\|m-M\|_{L^2(0,T;L^2(\Omega))}$ yields
\[
\|m-M\|_{L^2(0,T;L^2(\Omega))}
\le
\frac{1}{\gamma}
\|R(m)-r\|_{L^2(0,T;L^2(\Omega))}.
\]
The same estimate is trivial when $m=M$. Finally, using the discrete
adjoint error estimate, $\|R(m)-r\|_{L^2(0,T;L^2(\Omega))}
\le C(h^s+\Delta t)$, 
we obtain
\[
\|m-M\|_{L^2(0,T;L^2(\Omega))}
\le C(h^s+\Delta t).
\]
This completes the proof.
\end{proof}
\begin{theorem}\label{Theorem:5.9}
Let $(\boldsymbol u,p,\bar m)$ be the continuous optimal solution and
let $(U,P,M)$ be the fully discrete optimal solution. Under the
regularity assumptions of Lemma~\ref{lemma:aux-forward-error},
Lemma~\ref{lem:adjoint-error-fixed-control}, and
Theorem~\ref{Theorem:5.8}, there exists a constant $C>0$, independent of
$h$ and $\Delta t$, such that
\[
\begin{aligned}
&\max_{1\le k\le N}
\left(
\|\boldsymbol u^k-U^k\|_{H^1(\Omega)}
+
\|p^k-P^k\|_{L^2(\Omega)}
\right)
\\
&\qquad
+
\|\boldsymbol u-U\|_{L^2(0,T;H^1(\Omega))}
+
\|p-P\|_{L^2(0,T;L^2(\Omega))}
\le
C(h^s+\Delta t).
\end{aligned}
\]
In particular, if $s=1$, then
\[
\begin{aligned}
&\max_{1\le k\le N}
\left(
\|\boldsymbol u^k-U^k\|_{H^1(\Omega)}
+
\|p^k-P^k\|_{L^2(\Omega)}
\right)
\\
&\qquad
+
\|\boldsymbol u-U\|_{L^2(0,T;H^1(\Omega))}
+
\|p-P\|_{L^2(0,T;L^2(\Omega))}
\le
C(h+\Delta t).
\end{aligned}
\]
\end{theorem}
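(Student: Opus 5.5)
The plan is to split the state error with the intermediate discrete state $(U(\bar m),P(\bar m))$, generated by the \emph{continuous} optimal control $\bar m$ through the fully discrete scheme \eqref{eq:fullydiscrete-eqs}:
\[
\boldsymbol u-U=(\boldsymbol u-U(\bar m))+(U(\bar m)-U(M)),\qquad p-P=(p-P(\bar m))+(P(\bar m)-P(M)).
\]
Here $U=U(M)$ and $P=P(M)$. The first pieces are exactly what Lemma~\ref{lemma:aux-forward-error} controls. It bounds $\max_k(\|\boldsymbol u^k-U^k(\bar m)\|_{H^1}+\|p^k-P^k(\bar m)\|_{L^2})$ together with the $L^2(0,T;H^1)$ and $L^2(0,T;L^2)$ errors by $C(h^s+\Delta t)$. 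This uses the assumed regularity and \eqref{eq:initial-data-assumption}; the initial-data term is of order $h^s$ as well.

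The second pieces solve the discrete scheme with zero data. Both discrete states start from the same $(U^0,P^0)$ and use the same $\boldsymbol f$, so $\delta U:=U(\bar m)-U(M)$ and $\delta P:=P(\bar m)-P(M)$ satisfy \eqref{eq:fullydiscrete-eqs} with $\boldsymbol f=0$, $\delta U^0=0$, $\delta P^0=0$, and source $\bar m-M$ in the second equation. I will rerun the stability argument of Lemma~\ref{lemma:aux-forward-error}, namely the chain from \eqref{eq:theta-error-1}--\eqref{eq:theta-error-2} to \eqref{eq:theta-final-stability}, with the following choices:
\begin{itemize}
\item $F^k=0$, so that $H^k=0$ and \eqref{eq:theta-p-by-theta-u-H} becomes $\|\delta P^k\|_{L^2}\le C\|\delta U^k\|_{a_{\boldsymbol u}}$;
\item $G^k(q)=\frac{1}{\Delta t}(\bar m-M,q)_{I_{k-1}\times\Omega}$, whose $a_p$-dual norm is bounded via Poincar\'e by $C\|\overline{(\bar m-M)}^k\|_{L^2}$.
\end{itemize}
Summing and applying Jensen gives $\Delta t\sum_k\|G^k\|_{a_p,*}^2\le C\|\bar m-M\|_{L^2(0,T;L^2)}^2$. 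Hence
\[
\max_k\|\delta U^k\|_{H^1}+\max_k\|\delta P^k\|_{L^2}+\|\delta U\|_{L^2(0,T;H^1)}+\|\delta P\|_{L^2(0,T;L^2)}\le C\|\bar m-M\|_{L^2(0,T;L^2)}.
\]
The max-in-time bound for $\delta P$ follows from the pointwise inf-sup bound, because $H^k=0$. The $L^2$-in-time norms are exact sums, since $\delta U$ and $\delta P$ are piecewise constant in time.

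Theorem~\ref{Theorem:5.8} then gives $\|\bar m-M\|_{L^2(0,T;L^2)}\le C(h^s+\Delta t)$. To check its hypothesis, I will use Lemma~\ref{lem:adjoint-error-fixed-control} at $m=\bar m$, which bounds $\|R(\bar m)-r\|$ by $C(h^s+\Delta t)$ plus the state errors already controlled by Lemma~\ref{lemma:aux-forward-error}. Combining the two pieces by the triangle inequality yields the claimed bound, and setting $s=1$ gives the second statement.

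The main obstacle is the stability step for $(\delta U,\delta P)$. The pressure test in the energy argument only yields $\Delta t\sum\|\delta P^k\|_{a_p}^2$ and $s_0\|\delta P^k\|^2$, which degenerate as $s_0\to0$. Uniform control of $\max_k\|\delta P^k\|_{L^2}$ must therefore come from the inf-sup relation with $H^k=0$, and this has to be checked carefully so that the constant is independent of $s_0$, $h$ and $\Delta t$. The discrete Gronwall step with $F^k=0$ should otherwise go through verbatim.
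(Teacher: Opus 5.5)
Your proposal is correct and follows the paper's proof essentially step for step. It uses the same splitting through the fully discrete state $(U(\bar m),P(\bar m))$ driven by the continuous optimal control, Lemma~\ref{lemma:aux-forward-error} for the fixed-control part, a rerun of the stability argument for the zero-initial-data difference system with $\Delta t\sum_k\|G_m^k\|_{a_p,*}^2\le C\|\bar m-M\|_{L^2(0,T;L^2(\Omega))}^2$, and Theorem~\ref{Theorem:5.8} to close. Your observation that $H^k=0$, which gives $\|\delta P^k\|_{L^2(\Omega)}\le C\|\delta U^k\|_{a_{\boldsymbol u}}$ and makes the Gronwall step unnecessary, spells out what the paper leaves implicit and also resolves the $s_0$-robustness concern you raise.
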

\begin{proof}
Let $(U(\bar m),P(\bar m))$ denote the fully discrete state generated by
the continuous optimal control $\bar m$. We decompose the total state
errors as
\begin{align*}
\boldsymbol u^k-U^k
&=
\bigl(\boldsymbol u^k-U^k(\bar m)\bigr)
+
\bigl(U^k(\bar m)-U^k\bigr),
\\
p^k-P^k
&=
\bigl(p^k-P^k(\bar m)\bigr)
+
\bigl(P^k(\bar m)-P^k\bigr).
\end{align*}
The first terms are fixed-control discretization errors. By
Lemma~\ref{lemma:aux-forward-error},
\[
\begin{aligned}
&\max_{1\le k\le N}
\left(
\|\boldsymbol u^k-U^k(\bar m)\|_{H^1(\Omega)}
+
\|p^k-P^k(\bar m)\|_{L^2(\Omega)}
\right)
\\
&\qquad
+
\|\boldsymbol u-U(\bar m)\|_{L^2(0,T;H^1(\Omega))}
+
\|p-P(\bar m)\|_{L^2(0,T;L^2(\Omega))}
\le
C(h^s+\Delta t).
\end{aligned}
\]

It remains to estimate the perturbation caused by replacing $\bar m$ by
the discrete optimal control $M$. Set
\[
\zeta_{\boldsymbol u}^k:=U^k(\bar m)-U^k(M),
\qquad
\zeta_p^k:=P^k(\bar m)-P^k(M).
\]
Subtracting the two fully discrete state equations corresponding to
$\bar m$ and $M$, we obtain, for all
$(\boldsymbol v,q)\in \boldsymbol V_h\times Q_h$,
\begin{align}
a_{\boldsymbol u}(\partial_\tau\zeta_{\boldsymbol u}^k,\boldsymbol v)
+
b(\partial_\tau\zeta_p^k,\boldsymbol v)
&=0,
\\
b(q,\partial_\tau\zeta_{\boldsymbol u}^k)
-
(s_0\partial_\tau\zeta_p^k,q)_\Omega
-
a_p(\zeta_p^k,q)
&=
\frac1{\Delta t}
(\bar m-M,q)_{I_k\times\Omega}.
\end{align}
The initial data are the same for both discrete states, hence
$\zeta_{\boldsymbol u}^0=0$, $\zeta_p^0=0$.
For the right-hand side $G_m^k(q):=\frac1{\Delta t}(\bar m-M,q)_{I_k\times\Omega}$,
we have
\[
\|G_m^k\|_{a_p,*}
\le
C\Delta t^{-1/2}
\|\bar m-M\|_{L^2(I_k;L^2(\Omega))}.
\]
Consequently,
\[
\Delta t\sum_{k=0}^{N-1}\|G_m^k\|_{a_p,*}^2
\le
C\|\bar m-M\|_{L^2(0,T;L^2(\Omega))}^2.
\]
Applying the same stability argument as in the proof of
Lemma~\ref{lemma:aux-forward-error}, with zero consistency error and
with right-hand side $\bar m-M$, gives
\[
\begin{aligned}
&\max_{1\le k\le N}
\left(
\|\zeta_{\boldsymbol u}^k\|_{H^1(\Omega)}
+
\|\zeta_p^k\|_{L^2(\Omega)}
\right)
+
\|\zeta_{\boldsymbol u}\|_{L^2(0,T;H^1(\Omega))}
+
\|\zeta_p\|_{L^2(0,T;L^2(\Omega))}
\\
&\qquad
\le
C
\|\bar m-M\|_{L^2(0,T;L^2(\Omega))}.
\end{aligned}
\]
By Theorem~\ref{Theorem:5.8}, $\|\bar m-M\|_{L^2(0,T;L^2(\Omega))} \le C(h^s+\Delta t)$. 
Therefore,
\[
\begin{aligned}
&\max_{1\le k\le N}
\left(
\|\zeta_{\boldsymbol u}^k\|_{H^1(\Omega)}
+
\|\zeta_p^k\|_{L^2(\Omega)}
\right)
+
\|\zeta_{\boldsymbol u}\|_{L^2(0,T;H^1(\Omega))}
+
\|\zeta_p\|_{L^2(0,T;L^2(\Omega))}
\le
C(h^s+\Delta t).
\end{aligned}
\]

Combining the fixed-control discretization estimate and the control
perturbation estimate by the triangle inequality yields
\[
\begin{aligned}
&\max_{1\le k\le N}
\left(
\|\boldsymbol u^k-U^k\|_{H^1(\Omega)}
+
\|p^k-P^k\|_{L^2(\Omega)}
\right)
\\
&\qquad
+
\|\boldsymbol u-U\|_{L^2(0,T;H^1(\Omega))}
+
\|p-P\|_{L^2(0,T;L^2(\Omega))}
\le
C(h^s+\Delta t).
\end{aligned}
\]
For $s=1$, this gives the asserted $O(h+\Delta t)$ estimate.
\end{proof}
\section{Numerical experiments}\label{Numerical Experiments}
In this section, we showcase numerical experiments on the unit square domain $\Omega=(0,1)^2$ with the final time $T=1$ to validate the a priori error results derived in Section~\ref{A priori Error analysis}. These computations were performed using the open source finite element library FEniCS \cite{alnaes2015fenics}. For a given initial control approximation, the state system is first solved forward in time starting from the prescribed initial conditions. Subsequently, using the computed state variables, the adjoint system is solved backward in time from the terminal conditions and the control is computed using the discrete projection formula \eqref{proj} (see \cite[Algorithm~4.1]{HSAK2}). This forward--backward structure is repeated until the stopping criterion is achieved.

To calculate the right hand side body force term $\boldsymbol{f}$, we use the variational equation \eqref{eq:new-weak-n-eq1} with $(\boldsymbol{v}, q) \in \boldsymbol{V} \times Q$, instead of $\boldsymbol{V}_n \times Q_n$, to get
\begin{align}
\label{f}	\boldsymbol{f} &= -\nabla \cdot \left(\boldsymbol{\mathcal{C}}\epsilon(\boldsymbol{\dot{u}})\right) + \alpha \nabla \dot{p}= -\nabla \cdot \left(2\mu \epsilon(\boldsymbol{\dot{u}}) + \lambda \nabla \cdot(\boldsymbol{\dot{u}}) \boldsymbol{I} \right) + \alpha \nabla \dot{p}
	\\
	\nonumber &= \begin{pmatrix}
		-\mu(2\dot{u}_{1,xx} + \dot{u}_{1,yy} + \dot{u}_{2,xy}) - \lambda (\dot{u}_{1,xx} + \dot{u}_{2,yx}) + \alpha \dot{p}_{x} \\ - \mu(\dot{u}_{2,xx} + \dot{u}_{1,yx} + 2\dot{u}_{2,yy}) - \lambda (\dot{u}_{1,xy} + \dot{u}_{2,yy}) + \alpha \dot{p}_{y}
	\end{pmatrix},
\end{align} 
where $\boldsymbol{u} = (u_1,u_2)^{T}$ and the notations in the subscript denote usual derivatives. In addition to this, we use the following norms and the rate of convergence formula 
\begin{align*}
	\mathsf{e}_{\boldsymbol{u}}&:= \max_{1\le i\le N}\|\nabla(\boldsymbol{u}^i-U^i)\|_{L^2(\Omega)}, && \mathsf{e}_{p}:= \max_{1\le i\le N}\|(p^i-P^i)\|_{L^2(\Omega)}, 
    \\
	\mathsf{e}_{\boldsymbol{w}}&:= \max_{1\le i\le N}\|\nabla(\boldsymbol{w}^i-W^i)\|_{L^2(\Omega)}, && \mathsf{e}_{r}:= \max_{1\le i\le N}\|(r^i-R^i)\|_{L^2(\Omega)}, 
    \\
    \mathsf{e}_{m}&:= \|\bar{m}-m\|_{L^2(0,T; L^2(\Omega))}, &&
    \mathsf{r}_{(\cdot)} := \frac{\log\left(\mathsf{e}_{(\cdot)}/\tilde{\mathsf{e}}_{(\cdot)}\right)}{\log\left(h/\tilde{h}\right)},
\end{align*}
where $\mathsf{e}$ and $\tilde{\mathsf{e}}$ denote errors generated on two consecutive meshes of size $h$ and $\tilde{h}$, respectively. The mesh size $h=1/2^H$ and $\Delta t = h$, where $H= 2,\ldots,7$.

\begin{algorithm}
	\caption{}
	\textbf{Requirements:} Mesh, number of time steps $N$, a $N\times 1$ tolerance vector $\boldsymbol{tol} = (tol^{1},\ldots,tol^{N})^{T}$, a $N\times 1$ error vector $\boldsymbol{error}$ to represent the L2-error between the updated control $M_{new}$ 
    at each time step and an initial guess $M_{0} = (M_{0}^{1},\ldots,M_{0}^{N})^{T}$ which contains guesses for each time step.
	\begin{algorithmic}[1]\label{alg1}
		\While{$\boldsymbol{error}[k] > \boldsymbol{tol}[k]$ for any $k=0,1,\ldots,N-1$} 
		\State Solve the fully discrete state system \eqref{eq:fullydiscrete-eqs} for $k=0,1,\ldots,N-1$ using $M_{0}$.
		\State Solve the fully discrete adjoint problem \eqref{eq:fullydiscreteadj-eqs11} for $k=0,1,\ldots,N-1$ using the 
		\Statex\hspace{\algorithmicindent}solutions of fully discrete state system.
		\State Compute $M^{k}$ for $k=0,1,\ldots,N-1$ using the discrete projection formula \eqref{proj}.
		\State Store the updated control values in $M_{\text{new}}$. 
		\State Calculate the \textbf{error} between $M_{0}$ and $M_{\text{new}}$ at the corresponding time steps. 
		\State Update the control approximation as $M_{0} = M_{\text{new}}$.
		\EndWhile
	\end{algorithmic}
\end{algorithm}
In this example, we consider the given problem with boundary $\partial \Omega$ of unit square domain $\Omega$ as $\Gamma_d = \Gamma_1 \cup \Gamma_2 \cup \Gamma_4,
\Gamma_p = \Gamma_1 \cup \Gamma_2 \cup \Gamma_3 \cup \Gamma_4,\
\Gamma_t = \Gamma_3,$ and $
\Gamma_f = \emptyset$, where
\begin{align*}
	\Gamma_1 &= \{(x,y)\in \partial \Omega: y=0\}, &&\Gamma_2 = \{(x,y)\in \partial \Omega: x=1\},\\
	\Gamma_3 &= \{(x,y)\in \partial \Omega: y=1\}, &&\Gamma_4 = \{(x,y)\in \partial \Omega: x=0\}.
\end{align*} 
As analytical solutions, we consider \vspace{-2mm}
\begin{align*}
	\boldsymbol{u} &=  \begin{pmatrix}
	\sin(\pi t)\sin^2(\pi x) \sin^2(\pi y)\\
	\sin(\pi t)\sin^2(\pi x) \sin^2(\pi y)
	\end{pmatrix}, \qquad &&p = t \sin(2\pi x) \sin(2\pi y),\\
	\boldsymbol{w} &= \begin{pmatrix}
	\sin(\pi (1-t)) \sin^2(\pi x) \sin^2(\pi y)\\
	 \sin(\pi (1-t))\sin^2(\pi x) \sin^2(\pi y)
	\end{pmatrix},\qquad &&r = (1-t) \sin(2\pi x) \sin(2\pi y).
\end{align*}
We take the material parameters $s_{0} = 0.1$, $E=1$, $\nu=0.25$, $\alpha=1$, $\kappa =  \begin{pmatrix}
	3 &1\\
	1&2 
\end{pmatrix}$, control bounds $m_{a} = -0.2$ and $m_{b} = 0.2$, and $\gamma=1$. Dirichlet boundary conditions are imposed according to the exact solutions. From Fig.~\ref{FIGURE C2}, it is evident that the proposed scheme attains optimal convergence rates for displacement, pressure and control variables, with the lowest-order MINI and Taylor–Hood elements, respectively. These results confirm all the theoretical error estimates.
\begin{figure}
	\centering
	{\includegraphics[width=0.495\textwidth]{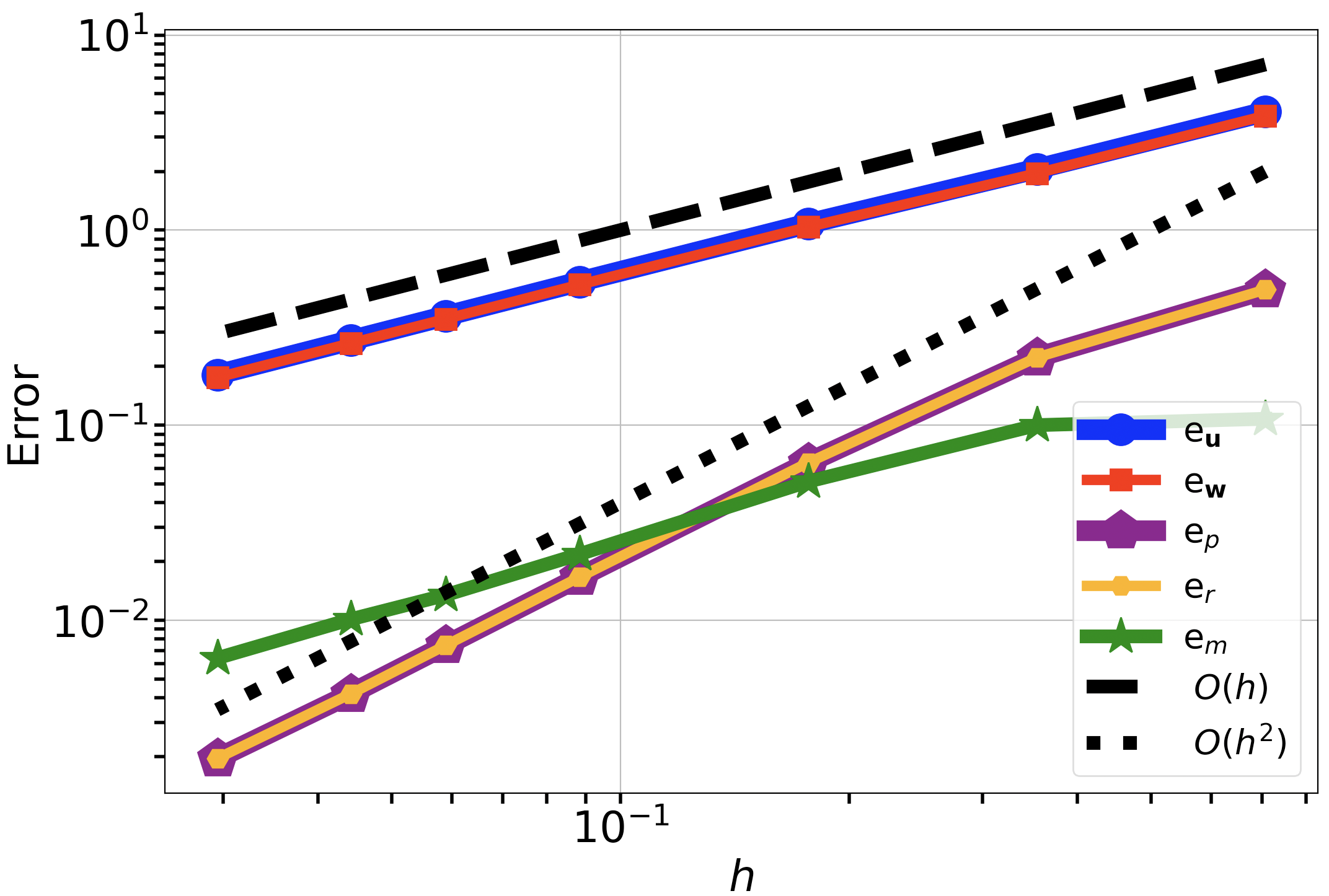}}
	{\includegraphics[width=0.495\textwidth]{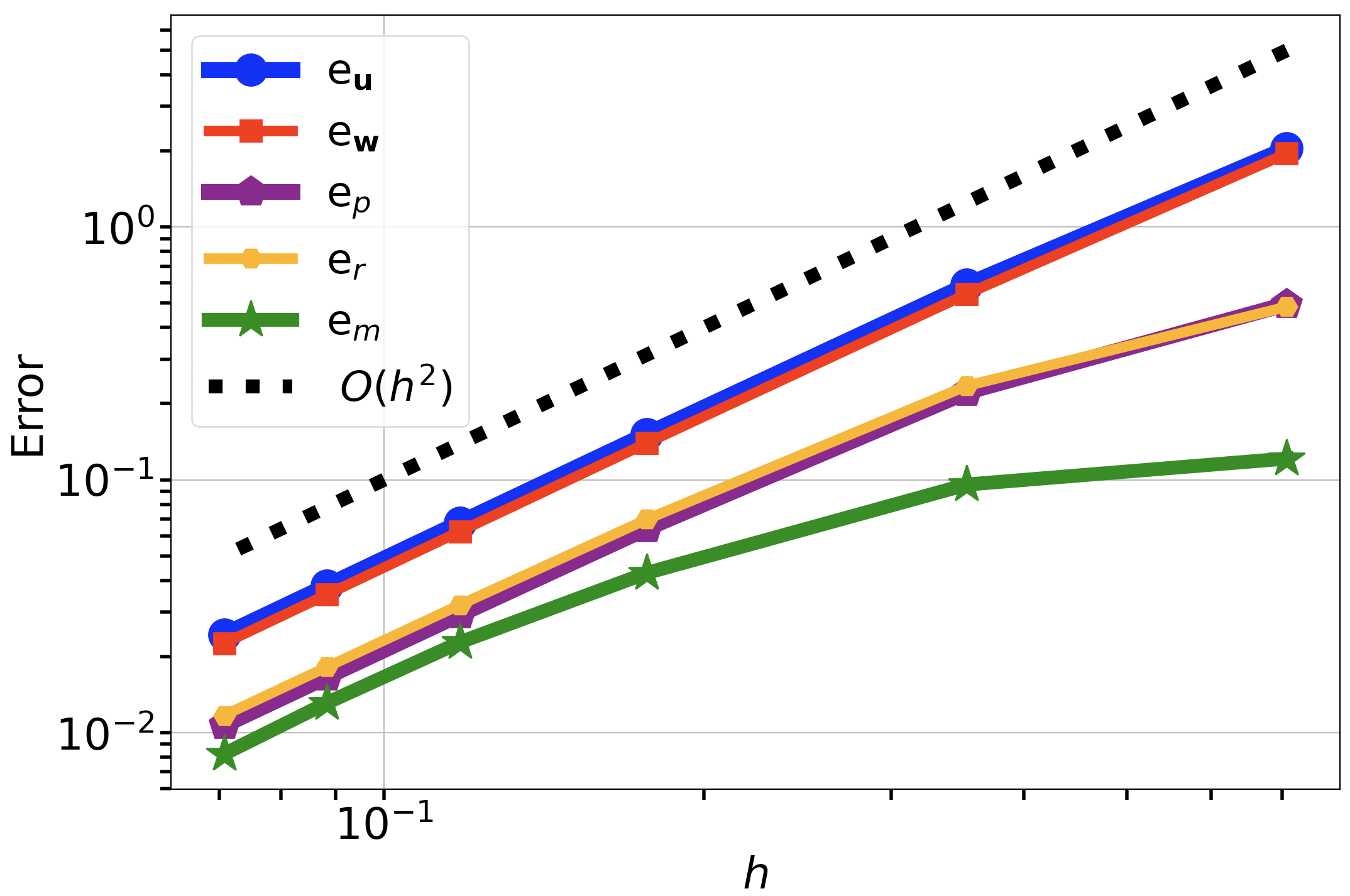}}
	\vspace{-3mm}
	\caption{Plots illustrating the convergence order of each variable computed using lowest--order MINI and Taylor–Hood elements for displacement and pressure.} \label{FIGURE C2}
\end{figure}
\section{Conclusions}\label{sec:conclusions}
In this work, we introduced a novel two-field symmetric formulation for optimal control problems governed by Biot’s model. The proposed formulation is well posed, and the existence and uniqueness of the optimal control are established. For the fully discrete setting, employing a backward Euler scheme for time discretization, we derived a priori error estimates that guarantee optimal convergence. In particular, the estimates for displacement and pressure variables are derived in $L^{\infty}(0,T;H^{1}(\Omega))$, $L^{\infty}(0,T;L^2(\Omega))$ norms and the control variable error with $L^{2}(0,T;L^{2}(\Omega))$ is approximated using a \textit{variational discretization} approach.

\section*{Acknowledgements}
The authors used AI-based tools for language editing. The authors assume responsibility for all content.

\bibliographystyle{amsplain}
\providecommand{\bysame}{\leavevmode\hbox to3em{\hrulefill}\thinspace}
\providecommand{\MR}{\relax\ifhmode\unskip\space\fi MR }
\providecommand{\MRhref}[2]{%
  \href{http://www.ams.org/mathscinet-getitem?mr=#1}{#2}
}
\providecommand{\href}[2]{#2}


\begin{thebibliography}{10}

\bibitem{alnaes2015fenics}
Martin Aln{\ae}s, Jan Blechta, Johan Hake, August Johansson, Benjamin Kehlet,
  Anders Logg, Chris Richardson, Johannes Ring, Marie~E Rognes, and Garth~N
  Wells, \emph{The fenics project version 1.5}, Archive of numerical software
  \textbf{3} (2015), no.~100.

\bibitem{biot1941general}
Maurice~A Biot, \emph{General theory of three-dimensional consolidation}, J.
  Appl. Phys. \textbf{12} (1941), no.~2, 155--164.

\bibitem{MR4410836}
Lorena Bociu and Sarah Strikwerda, \emph{Optimal control in poroelasticity},
  Appl. Anal. \textbf{101} (2022), no.~5, 1774--1796. \MR{4410836}

\bibitem{MR3504993}
Daniele Boffi, Michele Botti, and Daniele~A. Di~Pietro, \emph{A nonconforming
  high-order method for the {B}iot problem on general meshes}, SIAM J. Sci.
  Comput. \textbf{38} (2016), no.~3, A1508--A1537. \MR{3504993}

\bibitem{BMO}
Franco Brezzi and Michel Fortin, \emph{Mixed and hybrid finite element
  methods}, Springer Series in Computational Mathematics, vol.~15,
  Springer-Verlag, New York, 1991. \MR{1115205}

\bibitem{MR3022219}
Eduardo Casas and Konstantinos Chrysafinos, \emph{A discontinuous {G}alerkin
  time-stepping scheme for the velocity tracking problem}, SIAM J. Numer. Anal.
  \textbf{50} (2012), no.~5, 2281--2306. \MR{3022219}

\bibitem{MR4659441}
Aycil Cesmelioglu, Jeonghun~J. Lee, and Sander Rhebergen, \emph{Analysis of an
  embedded-hybridizable discontinuous {G}alerkin method for {B}iot's
  consolidation model}, J. Sci. Comput. \textbf{97} (2023), no.~3, Paper No.
  60, 26. \MR{4659441}

\bibitem{MR3047799}
Yumei Chen, Yan Luo, and Minfu Feng, \emph{Analysis of a discontinuous
  {G}alerkin method for the {B}iot's consolidation problem}, Appl. Math.
  Comput. \textbf{219} (2013), no.~17, 9043--9056. \MR{3047799}

\bibitem{MR3907413}
Guosheng Fu, \emph{A high-order {HDG} method for the {B}iot's consolidation
  model}, Comput. Math. Appl. \textbf{77} (2019), no.~1, 237--252. \MR{3907413}

\bibitem{MR2122182}
M.~Hinze, \emph{A variational discretization concept in control constrained
  optimization: the linear-quadratic case}, Comput. Optim. Appl. \textbf{30}
  (2005), no.~1, 45--61. \MR{2122182}

\bibitem{MR4405491}
Arbaz Khan and Pietro Zanotti, \emph{A nonsymmetric approach and a
  quasi-optimal and robust discretization for the {B}iot's model}, Math. Comp.
  \textbf{91} (2022), no.~335, 1143--1170. \MR{4405491}

\bibitem{Kim1999549}
Jun-Mo Kim and Richard~R. Parizek, \emph{Three-dimensional finite element
  modelling for consolidation due to groundwater withdrawal in a desaturating
  anisotropic aquifer system}, Internat. J. Numer. Anal. Methods Geomech.
  \textbf{23} (1999), no.~6, 549 – 571.

\bibitem{MR2177147}
Johannes Korsawe and Gerhard Starke, \emph{A least-squares mixed finite element
  method for {B}iot's consolidation problem in porous media}, SIAM J. Numer.
  Anal. \textbf{43} (2005), no.~1, 318--339. \MR{2177147}

\bibitem{MR3803860}
Jeonghun~J. Lee, \emph{Robust three-field finite element methods for {B}iot's
  consolidation model in poroelasticity}, BIT \textbf{58} (2018), no.~2,
  347--372. \MR{3803860}

\bibitem{MR3590654}
Jeonghun~J. Lee, Kent-Andre Mardal, and Ragnar Winther, \emph{Parameter-robust
  discretization and preconditioning of {B}iot's consolidation model}, SIAM J.
  Sci. Comput. \textbf{39} (2017), no.~1, A1--A24. \MR{3590654}

\bibitem{MR4636155}
Hao Liang and Hongxing Rui, \emph{The nonconforming locking-free virtual
  element method for the {B}iot's consolidation model in poroelasticity},
  Comput. Math. Appl. \textbf{148} (2023), 269--281. \MR{4636155}

\bibitem{malandrino2019poroelasticity}
Andrea Malandrino and Emad Moeendarbary, \emph{Poroelasticity of living
  tissues}, Encyclopedia of biomedical engineering (2019), 238--245.

\bibitem{mccormack2020modeling}
Kimberly McCormack, Marc~A Hesse, Timothy Dixon, and Rocco Malservisi,
  \emph{Modeling the contribution of poroelastic deformation to postseismic
  geodetic signals}, Geophysical Research Letters \textbf{47} (2020), no.~8.

\bibitem{phillips2008coupling}
Phillip~Joseph Phillips and Mary~F Wheeler, \emph{A coupling of mixed and
  discontinuous galerkin finite-element methods for poroelasticity}, Comput.
  Geosci. \textbf{12} (2008), no.~4, 417--435.

\bibitem{phillips2009}
\bysame, \emph{Overcoming the problem of locking in linear elasticity and
  poroelasticity: an heuristic approach}, Comput. Geosci. \textbf{13} (2009),
  no.~1, 5--12.

\bibitem{MR3606362}
B\'eatrice Rivi\`ere, Jun Tan, and Travis Thompson, \emph{Error analysis of
  primal discontinuous {G}alerkin methods for a mixed formulation of the {B}iot
  equations}, Comput. Math. Appl. \textbf{73} (2017), no.~4, 666--683.
  \MR{3606362}

\bibitem{HSAK2}
Harpal Singh and Arbaz Khan, \emph{Conforming/non-conforming mixed finite
  element methods for optimal control of velocity-vorticity-pressure
  formulation for the {O}seen problem with variable viscosity}, Comput. Math.
  Appl. \textbf{198} (2025), 59--92.

\bibitem{Swan200325}
Colby~C. Swan, R.S. Lakes, R.A. Brand, and K.J. Stewart,
  \emph{Micromechanically based poroelastic modeling of fluid flow in haversian
  bone}, J. Biomech. Eng. \textbf{125} (2003), no.~1, 25 – 37.

\bibitem{MR4221326}
Xialan Tang, Zhibin Liu, Baiju Zhang, and Minfu Feng, \emph{On the locking-free
  three-field virtual element methods for {B}iot's consolidation model in
  poroelasticity}, ESAIM Math. Model. Numer. Anal. \textbf{55} (2021),
  S909--S939. \MR{4221326}

\bibitem{WANGEN2016486}
Magnus Wangen, Sarah Gasda, and Tore Bj{\o}rnar{\aa}, \emph{Geomechanical
  consequences of large-scale fluid storage in the utsira formation in the
  north sea}, Energy Proc. \textbf{97} (2016), 486--493.

\bibitem{MR2273503}
Tessa Weinstein and Lynn~S. Bennethum, \emph{On the derivation of the transport
  equation for swelling porous materials with finite deformation}, Internat. J.
  Engrg. Sci. \textbf{44} (2006), no.~18-19, 1408--1422. \MR{2273503}

\bibitem{MR2644299}
Zhaojie Zhou and Ningning Yan, \emph{The local discontinuous {G}alerkin method
  for optimal control problem governed by convection diffusion equations}, Int.
  J. Numer. Anal. Model. \textbf{7} (2010), no.~4, 681--699. \MR{2644299}

\end{thebibliography}
\end{document}